\newcommand{\R}{\mathbb{R}}
\newcommand{\N}{\mathbb{N}}
\newcommand{\I}{\mathcal{I}}
\newcommand{\VV}{\mathbf{V}}
\newcommand{\EE}{\mathbf{E}}
\newcommand{\e}{\varepsilon}
\newcommand{\ver}{\mathrm{\mathbf{\bar v}}}
\newcommand{\br}{\color{red}}
\theoremstyle{plain}
\newtheorem{defi}{Definition}[section]
\newtheorem{prop}[defi]{Proposition}
\newtheorem{teo}[defi]{Theorem}
\newtheorem{lema}[defi]{Lemma}
\theoremstyle{definition}
\newtheorem{rema}[defi]{Remark}
\theoremstyle{remark}
\numberwithin{equation}{section}
\begin{document}

\title[]{Nonlocal Hamilton-Jacobi Equations on a network with Kirchhoff type conditions}

\author[]{Guy Barles}
\address{
Guy Barles: Institut Denis Poisson (UMR CNRS 7013)
Université de Tours, Université d’Orléans, CNRS.
Parc de Grandmont. 37200 Tours, France.
{\tt guy.barles@idpoisson.fr}
}

\author[]{Olivier Ley}
\address{
Olivier Ley: Univ Rennes, INSA Rennes, CNRS, IRMAR - UMR 6625, F-35000 Rennes, France.
{\tt olivier.ley@insa-rennes.fr}
}

\author[]{Erwin Topp}

\address{
Erwin Topp:
Instituto de Matem\'aticas, Universidade Federal do Rio de Janeiro, Rio de Janeiro - RJ, 21941-909, Brazil; and Departamento de Matem\'atica y C.C., Universidad de Santiago de Chile, Casilla 307, Santiago, Chile.
{\tt etopp@im.ufrj.br; erwin.topp@usach.cl.}
}

\date{\today}

\begin{abstract}
In this article, we consider nonlocal Hamilton-Jacobi Equations on networks with Kirchhoff type conditions for the interior vertices
and Dirichlet boundary conditions for the boundary ones: our aim is to provide general existence and
comparison results in the case when the integro-differential operators are of order strictly less than $1$. The main originality of these 
results is to allow these nonlocal terms to have contributions on several different edges of the network. The existence of Lipschitz 
continuous solutions is proved in two ways: either by using the vanishing viscosity method or by the usual Perron's method. The 
comparison proof relies on arguments introduced by Lions and Souganidis. We also introduce a notion of flux-limited
solution, nonlocal analog to the one introduced by Imbert and Monneau, and prove that the solutions of the Kirchhoff problem are flux-limited solutions for a 
suitable flux-limiter. After treating in details the case when we only have one interior vertex, we extend our approach to treat general networks.
\end{abstract}

\keywords{Nonlocal Equations, Hamilton-Jacobi Equations, Networks, Kirchhoff conditions, Existence, Comparison, Regularity, Junction viscosity solutions, Flux-limited solutions}

\subjclass[2010]{35K55, 35R09, 47G20,  35B40, 33B20}

\maketitle

\section{Introduction.}

\subsection{General description of the problem.}
In this article, we are interested in nonlocal Hamilton-Jacobi Equations (NLHJE) posed on a general network $\Gamma$ with
Kirchhoff type conditions for the interior vertices and Dirichlet (or other) boundary conditions for the boundary ones.
Our aim is to provide a complete approach in the case when the nonlocal term is of order strictly less that $1$ but may involve
integral terms on several edges, and not only the current one. This means existence and uniqueness results but also a connection with the notion of flux-limited solutions of Imbert and Monneau \cite{im17}.
 
A general network $\Gamma$ in $\R^d$ is made of a finite number of vertices $\ver\in\VV$ connected with a finite
number of edges $E\in \EE$. Each edge has the differential structure of a curve and, by using a suitable
parametrization, we can define an Hamilton-Jacobi Equation on it. In general, these equations are
edge-by-edge unrelated, but after adding Kirchhoff-type conditions on some \textsl{interior vertices}
$\ver \in \VV_{i}$ (those which are connected to several edges), and suitable boundary conditions on the
remaining \textsl{boundary vertices} $\ver \in \VV_{b}$, with $\VV = \VV_{i} \cup \VV_{b}, \ \VV_i \cap \VV_b = \emptyset$,
we are lead to a system of equations on the whole network which may be well-posed.

We first describe the type of problem we have in mind at a formal level.
We consider the stationary Kirchhoff-Dirichlet problem
\begin{eqnarray}\label{edp-nl}
&&  \left\{
  \begin{array}{lll}
\lambda u -\I u(x) + H(x,u_x)=0, & x\in \Gamma\setminus \VV & \text{(NLHJE)},\\
\displaystyle \sum_{E\in \text{Inc}(\ver)} -\partial_E u(\ver) = B_\ver, & \ver\in  \VV_{i} & \text{(Kirchhoff condition)},\\
u(\ver)= h_\ver, & \ver\in  \VV_{b} & \text{(Dirichlet condition)}.
\end{array}
  \right.
  \end{eqnarray}
Here $\lambda >0$ is a constant, the nonlocal operator $\I$ and the Hamiltonian $H$
are, in fact, some collections $\{ \I_E \}_{E \in \EE}, \{ H_E \}_{E \in \EE}$
such that, on the one hand,
\begin{eqnarray}\label{nl-intro}
\I_E u(x) =  \int_{\Gamma} [u(z) - u(x)] \nu_E (x,z)dz, \qquad \text{$x\in E$},
\end{eqnarray}
is an  integro-differential operator whose kernel $\nu_E$
satisfies a Lévy-type integrability condition, namely
\begin{eqnarray}\label{levy}
&& \int_{\Gamma} \min\{ \text{dist}(x,z)^{\sigma_E} , 1\} \nu_E(x,z)dz <+\infty
\quad \text{for some $0<\sigma_E <1$},
\end{eqnarray}
where we have identified $dz$ with $d\mathcal H^1(z)$, the $1$-dimensional Hausdorff measure in $\R^d$.

For a general network, we have no intrinsic definition of $u_x$ and therefore no intrinsic definition of $H_E$.
This requires a parametrization of each edge, a definition of $u_x$ and then of $H_E$ for any $E$. We do
not want to enter into details here and we refer the reader to Section~\ref{sec:gene-net} for a complete description
of this general case.

In this introduction, and in most of our article, we are going to consider the case of a simple junction,
i.e., the case
when there is only one interior point and the different edges are segments. More specifically, $ \VV_{i}=\{O\}$, where $O$ is the origin in $\R^d$, 
connected to a finite number of finite length edges  $\{ E_i \}_{1\leq i \leq N}$ that  link it to a finite set of exterior vertices $\VV_b =\{\ver_i\}_{1\leq i \leq N}$.
In this setting, 
we write 
$$E_i=\{t\ver_i, t \in (0,1)\},$$
and, for $x\in E_i$, we set $x_i:=|x|$, which is the parametrization by arc length of the (open) segment $E_i$. Finally we define
$u_i : [0,a_i]\to \R$, where $a_i=|\ver_i|$, by
$$ u_i (x_i):=u(x)\quad \hbox{if $x\in E_i$,}$$
and we use the notation $u_{x_i}(x)=u'_i(x_i)$, again if $x\in E_i$.
With these notations, we can define in a proper way the Hamiltonians $H_i: E_i \times \R \to \R$ and our problem can be written as
\begin{eqnarray}\label{edp-junction}
&&  \left\{
  \begin{array}{ll}
\lambda u (x) -\I_i u (x) + H_i(x,u_{x_i})=0, & x\in E_i, \ 1\leq i\leq N,\\
\displaystyle \sum_{1\leq i\leq N} -u_{x_i} (O) = B,\\
u(a_i)= h_i, & 1\leq i\leq N,
\end{array}
\right.
\end{eqnarray}
where, for the sake of notations, we replace the dependence with respect to the edge $E_i$
by the simpler subscript $i$.

We refer the reader to  Sections~\ref{secdefjunction},~\ref{sec:def-visco} and~\ref{sec:gene-net} for precise definitions
and, in particular for the notion of viscosity solutions: this notion of solutions is the one which is used by Lions and
Souganidis~\cite{ls16,ls17} and which is called ``junction viscosity solutions''
in the book of Barles and Chasseigne~\cite{bc24}. 

We stress on the fact that even in this simplest version of a single interior vertex, the main difficulties of the problem are still there and for this reason we make an exhaustive study of this case. We also mention that it is possible to implement other boundary conditions on the vertices $\VV_b$ such as Neumann, or state-constraint conditions, as well as exterior conditions which are natural in nonlocal problems, see Section~\ref{secbc} for more details.

\subsection{Previous results.}
We prove the well-posedness of this simplified problem~\eqref{edp-junction} by using this notion of (junction) viscosity solutions.
Though there is an intense research activity on viscosity solutions for local Hamilton-Jacobi Equations (HJ-Equations for short)
on networks and, more generally, on stratified structures, few is known about nonlocal equations and this paper intends to be
one of the first studying nonlocal equations in this framework. 

Thus, before describing more precisely our results,
we briefly review the literature for local equations.

Despite we use a pure PDE approach,~\eqref{edp-nl} is intimately linked with optimal control problems when the Hamiltonians
$H_E$ are convex (or concave). In the case of networks, the analysis of deterministic optimal control problems posed on a 
simple junction and its relation with time-dependent HJ-Equations can be found in the seminal works Achdou et al.~\cite{acct13} 
(see also~\cite{aot15}) and Imbert et al.~\cite{imz13}. Then, Imbert and Monneau in~\cite{im17}
introduced the notion of \textsl{flux-limited solutions}, a particular notion at the junction point whose properties
makes it compatible with the optimal control perspective, and allows to weaken some of the standing assumptions
on the problem. In parallel, Lions and Souganidis~\cite{ls16, ls17} (see also Morfe~\cite{morfe20}) provide well-posedness
for HJ-Equations with Kirchhoff-type condition on the junction (as in~\eqref{edp-nl}),
providing well-posedness for the problem and investigated its relation with flux-limited solutions (See also~\cite{bbci18} for more 
on the relation among these two types of solutions).
In the case of more general networks,
we can mention the papers of Camilli and Schieborn~\cite{sc13} and Siconolfi~\cite{siconolfi22} where some of their
ideas are used in the treatment of general networks in Section~\ref{sec:gene-net}.

As far as elliptic or parabolic {\em nondegenerate} equations on networks are concerned, existence and uniqueness of classical solutions can be found in the work of Von Below~\cite{vonbelow88} for linear parabolic equations, while weak and classical solutions
for nonlinear equations are studied in~\cite{cms13, cm16, adlt19, adlt20, ohavi21}. The case of degenerate equations is more
delicate and is considered in~\cite{in17, ls17} using the notion of viscosity solutions.

For the readers who may be interested in stratified media, a pioneering work in this direction is Bressan and
Hong~\cite{bh07}: they show that the value function of a certain optimal control problem
is the viscosity solution of an associated HJ-Equation posed on the Euclidean space, but whose Hamiltonian encodes
the main features of the stratification. The book of Barles and Chasseigne~\cite{bc24} provides a more general version
of their results and, even if they do not really consider problems set on networks, their study of HJ-Equations with a
co-dimension one discontinuity contains basic ideas to compare ``junction viscosity solutions'' and ``flux-limited solutions'';
we borrow several of their technical arguments in this article. 

Let us mention
that the analysis of equations on
junctions has gone beyond existence and uniqueness, and it have also been addressed in more involved settings
such as Mean Field Systems on networks~\cite{cm16, adlt19, adlt20}, and homogenization on networks~\cite{fs20}. 


\subsection{Main results.}
In Section~\ref{secuniqueness}, we prove a (strong) comparison result for problem~\eqref{edp-junction},
from which classical Perron's method leads us to the well-posedness of a viscosity solution $u \in C(\Gamma)$ to the problem.

However, we have decided to present a  constructive approach that gives us a better understanding of the problem.
This is one of the aim of  Section~\ref{sec:exis-junct}.
The idea is to exploit the well-known vanishing viscosity method.
We replace the PDE in the edges $E_i$ in~\eqref{edp-junction}
with the approximating equation
\begin{equation}\label{eqvanish}
\begin{split}
\lambda u^\epsilon - \epsilon u^\epsilon_{x_i x_i} - \I_i u^\epsilon + H_i (x, u^\epsilon_{x_i}) & = 0 \quad \text{in $E_i$,}
\end{split}
\end{equation}
for $\epsilon \in (0,1)$, where $u^\epsilon_{x_i x_i}:=(u_i^\epsilon)''(x_i)$ for $1 \leq i \leq N$.

As a first step, we construct a viscosity solution $u^\epsilon \in C^{1,1}(\bar E_i) \cap C^2(E_i)$
following an idea
of Ohavi~\cite{ohavi21}.
It consists in solving the system with a Dirichlet boundary condition at the junction $O$
instead of the Kirchhoff condition. By a continuous dependence result and the Intermediate
Value Theorem, we prove we can choose the value of the Dirichlet condition at the junction
in order to recover the  Kirchhoff condition. 

We remark that our approximate solution  $u^\epsilon$ satisfies the Kirchhoff condition in the classical sense (the derivative at the end of each edge exists). The Kirchhoff junction condition together with the leading effect of the Hamiltonian allows us to prove uniform Lipschitz estimates on a neighborhood of $O$ for the family $\{ u^\epsilon \}_\epsilon$. By stability, a junction viscosity solution to~\eqref{edp-junction} is obtained in the passage to the limit $\epsilon \to 0$, and this solution is Lipschitz continuous at the junction point. That is the content of our existence result Theorem~\ref{teoexistence}.

Another advantage to use this approach, instead of the direct application of Perron's method,
is that we obtain, as a by-product, a general well-posedness
result for nonlinear viscous nonlocal Hamilton-Jacobi Equations with Kirchhoff conditions
on general networks (see Theorem~\ref{teo-gene}). Here, the Intermediate Value Theorem is replaced by its higher dimensional version encoded by Poincar\'e-Miranda Theorem, see Section~\ref{sec:gene-net}. This is a new result, extending in the framework
of nonlocal equations some previous works on local equations~\cite{cms13, cm16, ls17, in17, adlt19}.

The next step is the uniqueness of the solution, which is presented in Theorem~\ref{teo1}.
The proof of this result is a direct adaptation of the arguments presented in Lions and Souganidis~\cite{ls17}.
As it is usual in problems on networks, all the difficulties are related to the
vertices. Here, the Lipschitz continuity of the (sub)solution allows us to avoid the usual doubling variables procedure in the comparison proof, which is the main difficulty here since the problem is naturally discontinuous at $O$. In general,
$H_i(O, p) \neq H_j(O,p)$ and $\I_i u(O)\neq  \I_j u(O)$ for $i \neq j$.

Let us point out that the evaluation of the nonlocal operator at the junction is a
key point in our work. It requires the assumption~\eqref{levy} for $\sigma_E <1$.
The case $\sigma_E \geq 1$ is more delicate, starting with the evaluation of higher-order nonlocal operators at the junction point, and its viscosity formulation. One particular case that is considered here is when the nonlocality is \textsl{censored} to the edge, meaning that the integration in~\eqref{nl-intro} occurs only on $E_i$. Even in this simpler scenario, 
the evaluation of $\I_i u(O)$ necessarily requires that $u_{x_i}(O) = 0$,
see Guan and Ma~\cite[Theorem 5.3]{gm06}, which is not always compatible with prescribed Kirchhoff conditions at $O$. The nonlocal operators studied in~\cite{gm06} have a probabilistic interpretation and may arise in possible applications of stochastic optimal control problems on networks.
This kind of difficulty also appear in the context of (local) second-order problems.
In most of the cases the authors require the ellipticity degenerates on the junction, see~\cite{in17, ls17}.

We address the connections with the notion of flux limited solutions in Section~\ref{sec-FL}, proving its equivalence with Kirchhoff-type solutions, once an appropriate notion of flux limiter is defined. In this task, Lipschitz regularity of subsolutions and a relaxed evaluation of the nonlocal operator in the extended real line $\R \cup \{ -\infty, +\infty \}$ plays a key role to isolate the role of the critical slopes in the flux limiter, making the problem closer to the pure first-order case. That is the reason the ideas presented here hardly can be used on equations with higher-order nonlocal operators.

\medskip

The paper is organized as follows. In Section~\ref{secdefjunction} we provide the details of the definition of the junction and the PDE we consider, as well as the standing assumptions. In Section~\ref{sec:def-visco} we introduce the notion of solution and basic properties. In Section~\ref{sec:exis-junct} we prove the existence of a viscosity solution to the Kirchhoff problem, and provide some regularity estimates. In Section~\ref{secuniqueness} we prove a strong comparison result. We list some possible extensions concerning the boundary conditions in Section~\ref{secbc}. Section~\ref{sec:gene-net} is devoted to the case of general networks where we extend all the above results to this more complicated setting; the aim task is to introduce a suitable parametrization of each curve to be able to properly define $u_x$ and then $H_E$. In Section~\ref{sec-FL} we introduce the notion of flux limited solution and prove its equivalence with the Kirchhoff-type solutions in the context of junctions. In Appendix~\ref{AppregIu} we present the proofs of some auxiliar results that we use in the body of the paper.

\bigskip

\paragraph{\bf Acknowledgement.}
During the preparation of this work, several research visits have been realized,
respectively to USACH and IRMAR. In each case the concerned author wishes to
acknowledge their hosts for their hospitality and supports.
O.L. is partially supported by the ANR (Agence Nationale de la Recherche) through the COSS project ANR-22-CE40-0010 and the Centre Henri Lebesgue ANR-11-LABX-0020-01.
E.T. was supported by CNPq Grant 306022/2023-0, CNPq Grant 408169/2023-0, and FAPERJ APQ1 210.573/2024.

\section{Equation on a junction with Kirchhoff condition and Dirichlet boundary condition.}
\label{secdefjunction}

\subsection{Junction}\label{sec:junct1}
We quickly recall what we already explain in the introduction: we consider the case of a star-shaped network $\Gamma$ embedded in $\R^d$,
$d \geq 2$, where $O = (0,...,0) \in \R^d$ is the (unique) junction point ---in other words, $\VV_i =\{O\}$--- and we have a family of $N$ ``boundary vertices''
$\VV_b = \{\ver_1, ..., \ver_N \}$, hence of $N$ (open) edges $\{ E_i \}_{1\leq i\leq N}$ given by
$$E_i=\{t\ver_i, t \in (0,1)\}.$$
Of course, we assume that, for any $i,j$, $\ver_i\neq O$ and $\ver_i$, $\ver_j$ are not collinear; as a consequence the $E_i$ are non-empty and, if $i\neq j$, $E_i\cap E_j =\emptyset$. With these notations, we have
\begin{equation}\label{Gamma}
\Gamma :=  \bigcup_{i=1}^N \bar E_i,
\end{equation}
where $\bar E_i$ denotes the closure of the set $E_i$ in $\R^d$.

We now fix a natural parametrization of $\Gamma$ by arc length, namely we set $x_i=|x|$ on $E_i$, $a_i=|\ver_i|$ and $J_i = (0, a_i)$, where $|\cdot |$ denotes the usual Euclidean norm in $\R^d$.
We denote by $\gamma_i : \bar J_i=[0, a_i] \to \bar E_i$ the canonical bijection $\gamma_i(x_i) = x$ if $x \in \bar E_i$ and $|x|=x_i$. In particular, for each $x \in \Gamma \setminus \{ O \}$, there exists a unique $i$ such that $x \in E_i$, and in this case we write $x_i = \gamma_i^{-1}(x) \in J_i$.
Throughout the article we will often make the abuse of notation by identifying $x\in E_i$ and $x_i\in J_i$.
We also mention that the parametrization we choose is coherent with the one we will choose in the case
of general networks (see Section~\ref{sec:gene-net} for details).


We are going to consider the geodesic distance $\rho$ on $\Gamma$. Given $x,y\in \Gamma$, we define
\begin{equation*}
\rho(x,y) = \left \{ \begin{array}{ll} |x - y| \quad & \text{if $x,y\in \bar E_i$}, \\
|x| + |y| \quad &  \text{if $x\in \bar E_i$, $y\in \bar E_j$ with $i\neq j$}. \end{array} \right . 
\end{equation*}
Taking into account the way we parametrize the edges, we have $\rho(x,y) = |x_i - y_j|$ if $i = j$, and $\rho(x,y) = x_i + y_j$ if $i \neq j$. Thus, by abuse of notation, by writing $\rho(x_i, y_j)$ we mean  $\rho(\gamma_i(x_i), \gamma_j(y_j))$ for $x_i \in \bar J_i$ and $y_j \in \bar J_j$.
Notice that the geodesic distance $\rho$ and the distance induced on $\Gamma$ by the Eulidean norm are equivalent.

\subsection{Function spaces.}\label{sec:fct-space}

For a function $u : \Gamma \to \R$, we define $u_i = u \circ \gamma_i: \bar J_i \to \R$,
from which we have $u(x) = u_i(x_i)$ for $x\in \bar E_i$.
The function $u_i$ depends on the parametrization we chose but not its regularity.

We denote by $USC(\Gamma)$ (respectively  $LSC(\Gamma)$) the subset
of functions $u: \Gamma \to \R$ which are upper-semicontinuous (respectively
lower-semicontinuous) on $\Gamma$, that is, for all $i$, $u_i\in USC([0,a_i])$
(respectively  $u_i\in LSC([0,a_i])$).
The subset  $C(\Gamma)= USC(\Gamma)\cap LSC(\Gamma)$ is the set
of continuous functions on $\Gamma$.
It coincides with the usual notion of continuity
on $\Gamma$ induced by the geodesic distance.
For further purpose, we also introduce the subset
$SC(\Gamma)= USC(\Gamma)\cup LSC(\Gamma)$.

For $u: \Gamma \to \R$, 
we say that $u$ is differentiable at $x \in E_i$ if
$u_i$ is differentiable at $x_i$, and in that case we denote
$$
u_{x}(x) = u_{x_i}(x)=u_i'(x_i).
$$

For $m\in\mathbb{N}$, the space of $m$-times continuously differentiable functions on $\Gamma$ is defined by
\[
C^{m}\left(\Gamma\right):=\left\{ u\in C\left(\Gamma\right):u_i\in C^{m}([0,a_i])\text{ for all } i\right\}.
\]
Notice that $u\in C^{m}\left(\Gamma\right)$ is assumed to be continuous on $\Gamma$, all the $u_i$ are
$C^{m}$-continuously differentiable inside the edges and all their derivatives of order less than $m$ can be extended by
continuity to $[0,a_i]$. More precisely, when $u\in C^1\left(\Gamma\right)$, we define 
\begin{eqnarray}\label{prolong-derivative-O}
&&  u_{x_i}(O) := \lim_{x \to O, x \in E_i} u_x(x)
  = \lim_{x \to O, x \in E_i} \frac{u(x) - u(O)}{\rho(x, O)},\\
\label{prolong-derivative-bd}  
&&  u_{x_i}(\ver_i  ) := \lim_{x \to \ver_i, x \in E_i} u_x(x)
  = \lim_{x \to \ver_i , x \in E_i} - \frac{u(x) - u(\ver_i)}{\rho(x, \ver_i)}.
\end{eqnarray}
The above derivatives depend on the parametrization through the
orientation we chose for the edges (notice the minus sign in the definition of the second one).

To define more intrinsically the Kirchhoff condition, we may also use the notion of \textsl{inward derivative} of $u$
with respect to $E_i$ at $O$, denoted by $\partial_i u(O)$; this derivative has the advantage to be independent of the chosen
orientation of the edge but, of course, it is not independent of the parametrization. 
The inward derivative is the right one to consider when dealing
with Kirchhoff condition: in our simple framework, $ \partial_i u(O)= u_{x_i}(O)$, but we underline that it can be different
to $u_{x_i}(O)$ for general networks because of the different possible orientation.

In the sequel, for the sake of notations, we use the notation $u_{x_i}(x)$ for the derivatives of $u$ in $\bar E_i$.
We point out again that this simplification of notation will not be anymore possible
for general networks in Section~\ref{sec:gene-net}.

We finally recall that $u_i\in C^{0,\alpha_i}([0, a_i])$, $0<\alpha_i\leq 1$, if
\begin{eqnarray*}
[u_i]_{C^{0,\alpha_i}([0, a_i])}:= \sup_{x_i,y_i\in [0, a_i],\, x_i\not= y_i}\frac{|u_i(x_i)-u_i(y_i)|}{|x_i-y_i|^{\alpha_i}}<+\infty.
\end{eqnarray*}
It allows to define the set of H\"older continuous functions on $\Gamma$ by
\begin{eqnarray*}
C^{0,\alpha}(\Gamma):=
\left\{ u\in C\left(\Gamma\right) : u_i\in C^{0,\alpha_i}([0, a_i]) \text{ for $0<\alpha\leq \alpha_i\leq 1$}\right\}.
\end{eqnarray*}
Notice that, thanks to the inequality
\begin{eqnarray}\label{hold-ineg}
a^\alpha + b^\alpha \leq 2^{1-\alpha}(a+b)^\alpha \text{ for all $a,b\geq 0$, $0<\alpha\leq 1$},
\end{eqnarray}
if $u\in C^{0,\alpha}(\Gamma)$ and $K:=\max_i [u_i]_{C^{0,\alpha_i}([0,a_i])}$, then
\begin{eqnarray}\label{hold-gam}
&& |u(x)-u(y)|\leq  2^{1-\alpha}K\rho(x,y)^\alpha, \text{ for all $x,y\in\Gamma$.}
\end{eqnarray}

\subsection{Hamiltonian.}\label{sec:hamilt}
Now we explain the operators involved in our equation~\eqref{edp-nl}, starting with the Hamiltonian $H$. We assume the existence of a collection $\{ H_i \}_{1 \leq i \leq N}$
such that $H_i \in C(\bar E_i \times \R)$ for all $i$, 
and that satisfy 
%
\begin{eqnarray}\label{H}
&& \begin{array}{ll}
(i) & |H_i(x,p) - H_i(y, p)|\leq C_H(1+|p|)|x-y|, \quad x, y \in \bar E_i, p \in \R, \\[2mm]
(ii) & |H_i(x,p) - H_i(x, q)|\leq C_H |p - q|, \quad x \in \bar E_i, p, q \in \R, \\[2mm]
(iii) & C_H^{-1}|p| -C_H \leq H_i(x, p)\leq  C_H(1+|p|), \quad x \in \bar E_i, p \in \R,
\end{array}
\end{eqnarray}
for some $C_H > 1$. 

For simplicity, we choose here to deal with coercive Hamiltonians satisfying
  the classical assumptions coming from Optimal Control,
  see~\cite{barles13} for instance. Nevertheless, several results presented here can be readily applied to Hamiltonians with superlinear growth in the gradient, and/or less regularity on the state variable.

\begin{rema}\label{hamilt-param}
  With these notations, given $x \in \bar E_i$ and $p \in \R$, we naturally identify $H_i(x,p)$
  and $H_i(\gamma_i^{-1}(x),p)$, meaning that the HJ Equation depends on the parametrization of the network.
For instance, if we change our parametrization in Section~\ref{sec:junct1} with the opposite
orientation, $u_{x_i}$ changes sign and $H_i(x,p)$ is changed into $H_i (x,-p)$.
When thinking to applications, 
in optimal control problems for example, we often first parametrize the network to be
able to write the dynamic and the cost and then we derive the family of Hamiltonians $H_i$, which
allows to define the ``abstract'' Hamiltonian~$H$ on~$\Gamma$.
\end{rema}

\subsection{Nonlocal operator.}\label{sec:nonlocal}
Next, we define the nonlocal operator $\I u(x)$ for $x \in \Gamma$ and $u: \Gamma \to \R$.

We consider a family of two-parametric measurable functions $\{ \nu_{ij} \}_{1 \leq i,j \leq N}$ with the form $\nu_{ij}: \bar E_i\times (0,\infty)\to [0,+\infty)$, $1\leq i,j\leq N$,
  satisfying the following conditions:
\begin{eqnarray}\label{hyp-nu}
&& \begin{array}{c}
\text{There exist $\Lambda>0$ and $0<\sigma<1$ such that for all  $x,y\in \bar E_i$, $r>0$}\\[2mm]
\displaystyle  0\leq \nu_{ij}(x,r)\leq \frac{\Lambda}{r^{1+\sigma}},
  \quad
  |\nu_{ij}(x,r)-\nu_{ij}(y,r)|\leq  \frac{\Lambda}{r^{1+\sigma}}|x-y|.
\end{array}
\end{eqnarray}

In particular, this means that the following L\'evy integrability condition (see~\eqref{levy}) takes place
\begin{equation}\label{Levy1}
\sup_{1\leq i,j\leq N} \sup_{x \in \bar E_i}\int_{0}^{\infty} \min \{ r^{\gamma}, 1 \} \nu_{ij} (x, r)dr < +\infty, 
\end{equation}
for all $\gamma > \sigma$.

Then, setting, for all $x\in \bar E_i$,
\begin{eqnarray*}
  \nu_i (x,z)=
  \left\{
  \begin{array}{ll}
    \nu_{ii}(x,\rho(x,z))= \nu_{ii}(x,|x_i-z_i|), & z\in \bar E_i,\\
     \nu_{ij}(x,\rho(x,z))= \nu_{ij}(x,x_i + z_j ), & z\in \bar E_j, j\not= i,
  \end{array}
  \right.
\end{eqnarray*}
we define
\begin{eqnarray}
\label{def-I}  
 \hspace*{0.7cm} \I_{ii} u(x) 
&:=&  \int_{E_i} [u(z) - u(x)] \nu_{ii}(x, \rho(x,z))dz, \\
&=&  \int_{J_i} [u_i(z_i) - u_i(x_i)] \nu_{ii}(x, |x_i-z_i|)dz_i.\nonumber
\end{eqnarray}
It is convenient to introduce the further notation
\begin{eqnarray} \label{defIij}
  \I_{ij} u(x)&:=& \int_{E_j} [u(z) - u(x)] \nu_{ij}(x, \rho(x,z))dz\\
  &=& \int_{J_j} [u_j(z_j) - u_i(x_i)] \nu_{ij}(x, x_i+z_j)dz_j \nonumber
  .
\end{eqnarray}
in order that we can write 
\begin{eqnarray}\label{Iiu}
\I_i u(x) = \I_{ii} u(x)+ \sum_{j \neq i}  \I_{ij} u(x) \quad \text{ for  $x\in \bar{E}_i$}.
\end{eqnarray}

Notice that the nonlocal operator $\I_i u$ requires the values of $u$ on all $\Gamma$ in order to be evaluated.
Writing $\I_i u$ as in~\eqref{Iiu},
we call $\I_{ii} u$ the {\it censored part in $\bar{E}_i$} (which ``does not see'' the other edges)
and $\sum_{j \neq i}  \I_{ij} u$ is the {\it exterior part} (which ``interacts'' with other edges).
We finally say that $\I_i u$ is {\it censored to $\bar{E}_i$} if $\I_{ij} u=0$
 for $j\not= i$ (all the  $\nu_{ij}$, $i\neq j$, are zero).


Thanks to~\eqref{hyp-nu}-\eqref{Levy1}, the nonlocal operator is well-defined as soon as
$u$ is H\"older continuous on $\Gamma$ with an exponent larger than $\sigma$.
More precisely, we have the following estimate.
\begin{lema}\label{regI}
Consider the nonlocal operator $\I$ given by~\eqref{def-I}
under Assumption~\eqref{hyp-nu}.
If $u\in  C^{0,\gamma}(\Gamma)$ for some $\gamma > \sigma$, then the map 
$x \mapsto \I_i u (x)$  is in $C^{0,\gamma-\sigma}(\bar E_i)$ for each $i$.
\end{lema}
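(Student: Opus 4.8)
The strategy is the standard one for estimating nonlocal operators with integrable Lévy kernels: split the integral into a region near the singularity of the kernel (where we use the Hölder modulus of $u$ together with the kernel bound $\Lambda r^{-1-\sigma}$) and a region away from it (where we use the boundedness/Hölder continuity of $u$ and the $x$-Lipschitz bound on $\nu_{ij}$). Fix $i$ and write $\I_i u = \I_{ii} u + \sum_{j\neq i}\I_{ij} u$ as in~\eqref{Iiu}; it suffices to treat each term separately, since a finite sum of $C^{0,\gamma-\sigma}$ functions is $C^{0,\gamma-\sigma}$. Set $K := \max_i [u_i]_{C^{0,\gamma_i}([0,a_i])}$ so that, via~\eqref{hold-gam}, $|u(z)-u(x)| \le 2^{1-\gamma} K\, \rho(x,z)^\gamma$ for all $x,z\in\Gamma$. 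For a fixed base point $x\in\bar E_i$, integrability of $r\mapsto \min\{r^\gamma,1\}\nu_{ij}(x,r)$ from~\eqref{Levy1} (applicable since $\gamma>\sigma$) already shows $\I_i u(x)$ is finite; the point is the modulus in $x$.

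First I would prove the Hölder bound for $\I_{ii} u$. Take $x, \hat x \in \bar E_i$ and let $\delta := \rho(x,\hat x) = |x_i - \hat x_i|$. In the inner region $\{z\in E_i : \rho(x,z) \le 2\delta\}$ (and the analogous region around $\hat x$), bound $|u(z)-u(x)|$ by $2^{1-\gamma}K\rho(x,z)^\gamma$ and $\nu_{ii}$ by $\Lambda\rho(x,z)^{-1-\sigma}$; integrating $r^{\gamma-1-\sigma}$ over $r\in(0,2\delta)$ gives a contribution $\lesssim \Lambda K\, \delta^{\gamma-\sigma}$, and likewise for $\hat x$. In the outer region $\{z : \rho(x,z) > 2\delta\}$, write the difference of integrands as
\[
[u(z)-u(x)]\nu_{ii}(x,\rho(x,z)) - [u(z)-u(\hat x)]\nu_{ii}(\hat x,\rho(\hat x,z)),
\]
and split it into $[u(z)-u(x)]\big(\nu_{ii}(x,\rho(x,z)) - \nu_{ii}(\hat x,\rho(\hat x,z))\big)$ plus $\big(u(\hat x)-u(x)\big)\nu_{ii}(\hat x,\rho(\hat x,z))$. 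The second piece is bounded by $|u(\hat x)-u(x)| \cdot \int \nu_{ii}(\hat x, r)\,dr$ over $r > \delta$, which is $\lesssim K\delta^\gamma \cdot \Lambda\delta^{-\sigma} = \Lambda K \delta^{\gamma-\sigma}$. For the first piece, one further splits the kernel difference as $\nu_{ii}(x,\rho(x,z)) - \nu_{ii}(\hat x,\rho(x,z))$ (controlled by $\frac{\Lambda}{\rho(x,z)^{1+\sigma}}|x-\hat x|$ from~\eqref{hyp-nu}) plus $\nu_{ii}(\hat x,\rho(x,z)) - \nu_{ii}(\hat x,\rho(\hat x,z))$; combined with $|u(z)-u(x)| \lesssim K\rho(x,z)^\gamma$ and integrated over $r>2\delta$ one gets $\lesssim \Lambda K \delta \cdot \delta^{\gamma-\sigma-1} = \Lambda K\delta^{\gamma-\sigma}$ (on the bounded network, $\delta\le \mathrm{diam}(\Gamma)$ keeps everything finite). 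Assembling the pieces yields $|\I_{ii}u(x) - \I_{ii}u(\hat x)| \le C\,\rho(x,\hat x)^{\gamma-\sigma}$ with $C$ depending on $\Lambda, \sigma, \gamma, K$ and $\mathrm{diam}(\Gamma)$.

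For the exterior terms $\I_{ij}u$ with $j\neq i$, the situation is actually easier near $x=\hat x\in\bar E_i$ because the kernel argument is $\rho(x,z) = x_i + z_j$, which stays bounded below by $\mathrm{dist}$ to the junction when $x$ is away from $O$ but, crucially, can only become small when both $x_i$ and $z_j$ are small; the same inner/outer split still works, only now the "inner" region near the singularity is $\{z\in E_j : x_i + z_j \le 2\delta\}$, and the integrand bound is handled verbatim with $\rho$ replaced by $x_i+z_j$, using $|u_j(z_j) - u_i(x_i)| \le 2^{1-\gamma}K(x_i+z_j)^\gamma$ from~\eqref{hold-gam} and the Lipschitz bound $|\nu_{ij}(x,r)-\nu_{ij}(\hat x,r)| \le \frac{\Lambda}{r^{1+\sigma}}|x-\hat x|$. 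I would remark that the only subtlety distinguishing the $j\neq i$ case is that the "constant-in-$z$ part'' $(u_i(\hat x_i) - u_i(x_i))\int_{J_j}\nu_{ij}(\hat x, \hat x_i + z_j)\,dz_j$ must be controlled: here $\int_{J_j}\nu_{ij}(\hat x,\hat x_i+z_j)\,dz_j \le \Lambda \int_{\hat x_i}^{\infty} r^{-1-\sigma}\,dr = \frac{\Lambda}{\sigma}\hat x_i^{-\sigma}$, which blows up as $\hat x\to O$, but it is multiplied by $|u_i(\hat x_i) - u_i(x_i)| \le K|\hat x_i - x_i|^\gamma \le K\delta^{\gamma}$, and since $\delta = |x_i-\hat x_i| \le \hat x_i$ is impossible in general — actually we only know $\delta\le \mathrm{diam}$ — one instead uses $\min\{\hat x_i^{-\sigma},\delta^{-\sigma}\}$-type bounds after possibly enlarging the inner region to $\{x_i+z_j \le 2\delta\} \cup \{z_j \le 2\delta\}$; in all cases the product is $\lesssim \Lambda K \delta^{\gamma-\sigma}$. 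Summing over the finitely many $j$ and combining with the censored estimate gives $x\mapsto \I_i u(x) \in C^{0,\gamma-\sigma}(\bar E_i)$, as claimed.

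The main obstacle is the bookkeeping in the outer region for the censored term: one must split the kernel difference carefully into an $x$-regularity part (using the second line of~\eqref{hyp-nu}) and a translation-in-$r$ part, and check that the resulting integrals $\int_{2\delta}^{\mathrm{diam}} r^{\gamma-2-\sigma}\,dr$ converge to something $\lesssim \delta^{\gamma-1-\sigma}$ — which requires $\gamma - 1 - \sigma < 0$, i.e. $\gamma < 1+\sigma$, automatically true since $\gamma \le 1$ — and that the singular inner integrals $\int_0^{2\delta} r^{\gamma-1-\sigma}\,dr$ converge, which needs exactly $\gamma > \sigma$, the hypothesis. The rest is routine estimation; I would state the constants' dependence explicitly but not track them sharply.
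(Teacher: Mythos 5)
Your overall architecture (splitting $\I_i u$ into the censored part and the exterior parts, an inner/outer decomposition at the scale $\delta=\rho(x,\hat x)$, the Hölder modulus of $u$ near the singularity, the $x$-Lipschitz bound on the kernel away from it) matches the paper's, and your inner-region and ``constant-in-$z$'' estimates are correct. However, there is a genuine gap in your outer-region estimate of the kernel difference. You split $\nu_{ii}(x,\rho(x,z))-\nu_{ii}(\hat x,\rho(\hat x,z))$ into an $x$-regularity part and a ``translation-in-$r$'' part $\nu_{ii}(\hat x,\rho(x,z))-\nu_{ii}(\hat x,\rho(\hat x,z))$, and you bound the latter as if $|\nu_{ii}(\hat x,r)-\nu_{ii}(\hat x,r')|\lesssim |r-r'|\,r^{-2-\sigma}$; this is exactly what produces your integrand $r^{\gamma-2-\sigma}$ and the factor $\delta\cdot\delta^{\gamma-\sigma-1}$. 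Assumption~\eqref{hyp-nu} gives no regularity whatsoever of $\nu_{ij}$ in its second argument: only the pointwise envelope $0\le\nu_{ij}(x,r)\le\Lambda r^{-1-\sigma}$ and Lipschitz continuity in $x$. A kernel oscillating arbitrarily fast in $r$ inside that envelope satisfies~\eqref{hyp-nu}, and for such a kernel your translation term is of size $\Lambda\rho(x,z)^{-1-\sigma}$ uniformly in $\delta$, so the outer integral contributes $O(1)$ rather than $O(\delta^{\gamma-\sigma})$ and the argument breaks down.

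The repair is precisely the device used in the paper's proof: change variables in each of the two integrals so that the kernel is evaluated at the same second argument $|z|$ in both, i.e.\ write $\I_{ii}u(x)=\int [u_i(x_i+z)-u_i(x_i)]\,\nu_{ii}(x,|z|)\,dz$ and similarly at $y$ (and the analogue~\eqref{reecrit234} for $j\ne i$). The discrepancy between the two points is then moved (a) to the endpoints of the integration intervals, producing boundary terms over sets of length $|x-y|$ that your inner-region computation already controls, and (b) into the argument of $u$, producing the second difference $u_i(x+z)-u_i(x)-(u_i(y+z)-u_i(y))$, which is handled by the Hölder regularity of $u$ alone via the two bounds $\lesssim |z|^\gamma$ on $\{|z|\le |x-y|\}$ and $\lesssim |x-y|^\gamma$ on the complement. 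With that modification the remaining kernel difference is $\nu_{ii}(x,|z|)-\nu_{ii}(y,|z|)$, for which the second line of~\eqref{hyp-nu} applies directly, and the rest of your estimates (including the exterior terms $\I_{ij}$, $j\ne i$) go through.
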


We present a proof of this lemma in Appendix~\ref{AppregIu}.


\subsection{PDE on the junction.}
We end this section by presenting the system~\eqref{edp-junction} we want to address.
We are interested in the existence and uniqueness of a function $u \in C(\Gamma)$, solving in each edge $E_i$, $1\leq i \leq N$, the nonlocal Hamilton-Jacobi equation
\begin{equation}\label{eq}
\lambda u(x) - \I_i u(x) + H_i(x, u_{x_i}(x)) = 0, \quad\hbox{for $x \in  E_i$}.
\end{equation}

We complement the equations with a Kirchhoff-type condition on the junction $O$,
\begin{equation}\label{Kirchhoff}
\sum_{i=1}^N - u_{x_i}(O)  = B, 
\end{equation}
for some $B\in\R$, and
Dirichlet boundary conditions at the end of each edge, that is
\begin{equation}\label{dirichlet-bc}
u_i (a_i)= h_i, \quad 1\leq i \leq N.
\end{equation}

The following {\bf steady assumption} is in force in all the paper:
\begin{eqnarray}\label{steady}
  && \text{$\lambda >0$, $H$ satisfies~\eqref{H}, $\nu_{ij}$ satisfy~\eqref{hyp-nu},
  $B, h_i\in \R$ are given.}  
\end{eqnarray}
  

\section{Definition of viscosity solutions on a Junction}
\label{sec:def-visco}

Using the definitions introduced in the previous section, we are
now in position to
state the notion of solution of~\eqref{edp-nl}. We give the definition for the junction problem~\eqref{edp-junction},
leaving to the reader the extension to general networks.


Given $\varphi\in SC(\Gamma)$,  a measurable subset $A \subseteq \Gamma$, and $x \in \bar E_i$,  $1\leq i \leq N$, we write
\begin{eqnarray*}
  && \I_i  [A] \varphi(x)  = \sum_{j=1}^{N} \int_{\bar E_j \cap  A} [\varphi(z) - \varphi(x)]
  \nu_{ij}(x, \rho(x,z)) dz,
\end{eqnarray*}
whenever the integrals make sense. Notice that 
$$
\I_i \varphi(x)=\I_i [A] \varphi(x)+\I_i  [\Gamma \setminus A] \varphi(x).
$$


In what follows, $B_\delta(x)=\{ z\in \Gamma :  \rho(x, z) < \delta\}$ is the open ball of radius $\delta$ centered
at $x\in \Gamma$ induced by the geodesic distance $\rho$, and
$B_\delta^c(x)=\Gamma\setminus B_\delta(x)$ is its complementary in $\Gamma$.

For $u\in SC(\Gamma)$, $\varphi\in  C^1(\Gamma)$ and every $\delta\in (0,1)$, we write
\begin{eqnarray}\label{def-G}
&& G_{i}^{\delta}(u,\varphi,p,x):=\lambda u(x) - \I_i  [B_\delta^c (x)] u(x) - \I_i  [B_\delta (x)] \varphi(x) + H_i(x, p)  
\end{eqnarray}
is well-defined for every $p\in\R$, $x\in \bar{E}_i$, $1\leq i\leq N$.

Recalling~\eqref{prolong-derivative-O}, we introduce the following

\begin{defi}\label{defi-visco}
We say that $u \in USC(\Gamma)$  is a viscosity subsolution to problem~\eqref{eq}-\eqref{Kirchhoff}-\eqref{dirichlet-bc}
if, for each $x \in \Gamma$, each $\delta > 0$ and each $\varphi \in C^1(\Gamma)$ such that $x$ is a maximum point of $u - \varphi$ in $B_\delta(x)$,  we have
\begin{eqnarray*}
G_{i}^{\delta}(u,\varphi, \varphi_{x_i}(x),x)\leq 0  &\mbox{if} & x \in E_i, 
\\
\min \Big{\{} \min_{1\leq i\leq N} G_{i}^{\delta}(u,\varphi, \varphi_{x_i}(O),O) , \ \sum_{1\leq i\leq N} -\varphi_{x_i}(O)  - B \Big{\}} \leq 0
& \mbox{if} & x=O,
\\
  \min \Big{\{} G_{i}^{\delta}(u,\varphi, \varphi_{x_i}(x),x)  , \ u(x)-h_i \Big{\}} \leq 0
& \mbox{if} & x=\ver_i.  
\end{eqnarray*}

We say that $u \in LSC(\Gamma)$  is a viscosity supersolution to problem~\eqref{eq}-\eqref{Kirchhoff}-\eqref{dirichlet-bc}
if, for each $x \in \Gamma$, each $\delta > 0$ and each $\varphi \in C^1(\Gamma)$ such that $x$ is a minimum point of $u - \varphi$ in $B_\delta(x)$, we have
\begin{eqnarray*}
G_{i}^{\delta}(u,\varphi, \varphi_{x_i}(x),x)\geq 0  &\mbox{if} & x \in E_i, 
\\
\max \Big{\{} \max_{1\leq i\leq N} G_{i}^{\delta}(u,\varphi, \varphi_{x_i}(O),O) , \ \sum_{1\leq i\leq N} -\varphi_{x_i}(O)  - B \Big{\}} \geq 0
& \mbox{if} & x=O,
\\
  \max \Big{\{} G_{i}^{\delta}(u,\varphi, \varphi_{x_i}(x),x)  , \ u(x)-h_i \Big{\}} \geq 0
& \mbox{if} & x=\ver_i.  
\end{eqnarray*}

A viscosity solution is a continuous function on $\Gamma$ which is simultaneously a viscosity sub and supersolution in the sense above. 
\end{defi}



As it is usual in the theory of viscosity solutions, we can always assume that $u(x) = \varphi(x)$ (that is, $\varphi$ ``touches" $u$ at $x$, from above or below depending the case) and that $x$ is either a strict global maxima or minima.

Next, we would like to provide an equivalent notion of solution for which we can get rid of the viscosity evaluation on the nonlocal operator and work directly with the function $u$.
For this purpose,
we introduce the subset of functions $u$ of $SC(\Gamma)$ such that
the nonlocal operator $\I u$ may be evaluated at $x\in\Gamma$ (with possibly
infinite value),
\begin{eqnarray}\label{def-F}
&&  \mathcal{F}_x=\bigg\{ u\in SC(\Gamma) \ : \
  \lim_{\delta\downarrow 0} \I_i [B_\delta^c(x)]u(x) \in [-\infty ,+\infty],\\[-2mm]\nonumber
&& \hspace*{6cm}\text{for all $i$ such that $x \in \bar{E_i}$} \bigg\}.
\end{eqnarray} 
When the limit above exists  in $[-\infty ,+\infty]$, we denote it by $\I_i u(x)$.
Notice that it does not depend on the function $\varphi$ chosen for computing the limit.
In the case of interest  $x=O$, all the limits  $\I_i u(O)$ for $1\leq i\leq N$,
must exist in order that $u\in \mathcal{F}_O$.


\begin{lema}\label{lem-F}  
  Let $u\in USC(\Gamma)$ (resp. $LSC(\Gamma)$)
  and $\varphi\in C^1(\Gamma)$ such that $u-\varphi$ has a local maximum (resp. minimum)
  at $x \in \bar E_i$ for some $i$. Then $u\in \mathcal{F}_x$ and $\I_i u(x)\in [-\infty, +\infty )$ (resp.  $(-\infty, +\infty ]$).
\end{lema}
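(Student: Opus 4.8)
The plan is to exploit the fact that the test function $\varphi$ touches $u$ at $x$ from the appropriate side, so that near $x$ we have a one-sided bound between $u$ and a $C^1$ function, and to combine this with the Lévy integrability condition~\eqref{Levy1} (equivalently~\eqref{hyp-nu}) to control the behaviour of the integrand near the singularity of the kernel. The point of the statement is twofold: first, that the truncated integral $\I_i[B_\delta^c(x)]u(x)$ has a limit as $\delta\downarrow 0$ in $[-\infty,+\infty]$ (so $u\in\mathcal F_x$), and second, that in the subsolution case this limit is not $+\infty$ (and dually not $-\infty$ in the supersolution case).

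First I would treat the subsolution case; the supersolution case is symmetric (replace $u$ by $-u$, or just reverse inequalities). Fix $i$ with $x\in\bar E_i$. Split $\I_i[B_\delta^c(x)]u(x)$ according to whether the integration point $z$ lies in $B_{\delta_0}(x)$ or in $B_{\delta_0}^c(x)$ for a fixed small $\delta_0\in(0,1)$ with $B_{\delta_0}(x)$ contained in the region where the local maximum holds and $\varphi$ is defined. On the outer region $B_{\delta_0}^c(x)$, the integrand $[u(z)-u(x)]\nu_{ij}(x,\rho(x,z))$ is integrable: $u$ is bounded on $\Gamma$ (it is $USC$ on a compact set, or at least locally bounded near $x$; if one only has local boundedness, note that the edges are finite segments and one can still bound $u$ on $\Gamma\setminus B_{\delta_0}(x)$ using semicontinuity on the compact $\Gamma$), and $\nu_{ij}(x,r)\le \Lambda r^{-1-\sigma}$ is integrable on $\{r\ge\delta_0\}$. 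So $\I_i[B_{\delta_0}^c(x)]u(x)$ is a finite number, independent of $\delta<\delta_0$.

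The crux is the inner region. For $z\in B_{\delta_0}(x)$, the maximum point property gives $u(z)-u(x)\le \varphi(z)-\varphi(x)$. Since $\varphi\in C^1(\Gamma)$, we have $|\varphi(z)-\varphi(x)|\le C\,\rho(x,z)$ for $z$ near $x$ with $C=\|\varphi'\|_\infty$ on the relevant edges (here one uses the definitions~\eqref{prolong-derivative-O}–\eqref{prolong-derivative-bd} and the mean value theorem edge by edge, recalling $\rho$ is the geodesic distance). Hence the positive part of the integrand is dominated: $[u(z)-u(x)]^+\,\nu_{ij}(x,\rho(x,z))\le C\,\rho(x,z)\,\Lambda\,\rho(x,z)^{-1-\sigma}=C\Lambda\,\rho(x,z)^{-\sigma}$, which is integrable over $B_{\delta_0}(x)$ since $\sigma<1$ (this is exactly where~\eqref{levy}, $\sigma<1$, is used). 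Therefore $\int_{B_{\delta_0}(x)\setminus B_\delta(x)}[u(z)-u(x)]^+\nu_i(x,z)\,dz$ is bounded above uniformly in $\delta$, while $\int_{B_{\delta_0}(x)\setminus B_\delta(x)}[u(z)-u(x)]^-\nu_i(x,z)\,dz$ is monotone nondecreasing as $\delta\downarrow 0$ and hence converges in $[0,+\infty]$. By monotone convergence the difference $\I_i[B_{\delta_0}(x)\setminus B_\delta(x)]u(x)$ converges in $[-\infty,+\infty)$ as $\delta\downarrow0$; adding the finite outer contribution gives $\lim_{\delta\downarrow0}\I_i[B_\delta^c(x)]u(x)\in[-\infty,+\infty)$, which is both $u\in\mathcal F_x$ and the claimed sign restriction. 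I would then remark that the symmetric argument, using $u(z)-u(x)\ge\varphi(z)-\varphi(x)\ge -C\rho(x,z)$ at a minimum, gives the supersolution statement with limit in $(-\infty,+\infty]$.

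The main obstacle is purely bookkeeping: making the domination argument clean when $x=O$ (so that several edges meet and $\rho(x,z)=z_j$ on $E_j$), and being careful that "local maximum in $B_\delta(x)$" in Definition~\ref{defi-visco} is for a specific $\delta$ — one should first shrink to work inside that ball, which is harmless since the nonlocal splitting~$\I_i\varphi=\I_i[A]\varphi+\I_i[\Gamma\setminus A]\varphi$ lets us absorb any fixed outer annulus into the finite part. No genuinely hard estimate is involved; the content is that $\sigma<1$ makes $\rho(x,z)^{-\sigma}$ integrable against $d\mathcal H^1$ near $x$, which tames the positive part of the integrand, and that the negative part only helps (it can push the value to $-\infty$ but never to $+\infty$).
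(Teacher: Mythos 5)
Your proof is correct and follows essentially the same route as the paper's: a one-sided bound $u(z)-u(x)\le\varphi(z)-\varphi(x)=O(\rho(x,z))$ near $x$, integrability of $\rho(x,z)^{-\sigma}$ for $\sigma<1$, and monotone convergence for the part that may diverge to $-\infty$. The paper merely packages the same idea via the auxiliary function $w_\delta=\varphi\chi_{B_\delta(x)}+u\chi_{B_\delta^c(x)}$ (following Caffarelli--Silvestre) instead of splitting the integrand into positive and negative parts, which is a cosmetic difference.
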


\begin{proof} We follow the lines of~\cite[Lemma 3.3]{cs09}. 
We only prove the first statement when $u\in USC(\Gamma)$, the proof when $u\in LSC(\Gamma)$
being similar.
Assume that $u-\varphi$ has a local maximum at $x \in \bar E_i$ in $B_{\delta_0}(x)$ for some $1\leq i\leq N$, $\delta_0 > 0$ and $\varphi\in C^1(\Gamma)$ with $u(x) = \varphi(x)$.
We have
\begin{eqnarray*}
&& u(z)-\varphi(z)\leq u(x)-\varphi(x)=0, \quad z\in B_{\delta_0}(x).
\end{eqnarray*}

Denote $w_\delta = \varphi \chi_{B_\delta(x)} + u \chi_{B^c_\delta(x)}$, where $\chi_A$ is the indicator function of $A$. Notice that for each $j$,
$$
\I_{ij} w_\delta(x) = \I_{ij}[B_\delta(x)] \varphi(x) + \I_{ij}[B^c_\delta(x)] u(x),
$$ 
is well-defined and finite for each $\delta \in (0, \delta_0)$, and it is nonincreasing as $\delta \to 0$. 

Since $w_{\delta_0} - w_\delta \geq 0$ in $\Gamma$ and it is nondecreasing with $\delta$, using Monotone Convergence Theorem we get
$$
\lim_{\delta \downarrow 0} \I_{ij} (w_{\delta_0} - w_\delta)(x) \in (0, +\infty],
$$
meanwhile, by Dominated Convergence Theorem, we have
$$
\lim_{\delta \downarrow 0} \I_{ij}[B_\delta(x)] \varphi(x) = 0.
$$

Thus, writing 
$$
\I_{ij}[B^c_\delta(x)] u(x) = -\I_{ij} (w_{\delta_0} - w_\delta)(x) + \I_{ij}w_{\delta_0}(x) - \I_{ij}[B_\delta(x)] \varphi(x),
$$
we get 
$$
\lim_{\delta \downarrow 0} \I_{ij}[B^c_\delta(x)] u(x) \in [-\infty, +\infty)
$$ 
for all $j$, from which $u \in \mathcal F_x$.
%
\end{proof}

In order to state the equivalent definition of viscosity solutions
we need to introduce sub/superdifferentials and the term~\eqref{def-G}
with $\delta=0$.

Given an interval $I \subset \R$ and a function $u: I \to \R$, we recall that $p \in \R$ is in the superdifferential
$D^+_I u(x_0)$ of $u$ at $x_0 \in I$ if
$$
u(x) \leq u(x_0) + p(x - x_0) + o(|x - x_0|) \quad \mbox{for all $x$ in an $I$-neighborhood of  $x_0$.} 
$$
In the case of junctions, we say that ${\bf p} = (p_1, \cdots , p_N)$ is in the  superdifferential $D^+_\Gamma u(O)$
of $u$ at $O$ if, for each $1 \leq i \leq N$, then we have
$$
u_i(x_i) \leq u(O) + p_i x_i + o(x_i) \quad \mbox{for all $x_i$ in an $\bar J_i$-neighborhood of  $0$.} 
$$
Similarly to the classical case, if  ${\bf p}\in D^+_\Gamma u(O)$,
then there exists a function $\varphi \in C^1(\Gamma)$ such that $\varphi_{x_i}(O)=p_i$
and  $u - \varphi$ has a local maximum at $O$.
The above definitions can be stated for the subdifferential $D^-_\Gamma u(O)$.


We denote $G_i(u, p, x) = \lim_{\delta \downarrow 0} G_i^\delta(u, p, x)$ whenever the limit exists in $\R \cup \{-\infty, +\infty\}$ and meets the value 
\begin{eqnarray}\label{def-G0}
  && G_{i}(u,p,x) := 
 \lambda u(x) - \I_i u(x)+ H_i(x, p).
\end{eqnarray}
Notice that the nonlocal term decides whether $G_i(u,p, x) \in \R$ or not. It is the case when $u$ is smooth enough at $x$
(Lemma~\ref{regI}) but, in general, infinite values are possible.

\begin{lema}\label{equiv-def}
A function $u\in USC(\Gamma)$ is a viscosity subsolution to problem~\eqref{eq}-\eqref{Kirchhoff}-\eqref{dirichlet-bc} at the point $x \in \Gamma$ 
if and only if
\begin{eqnarray*}
G_{i}(u, p_i,x)\leq 0  &\mbox{if} & x \in E_i,\; p_i\in D_{\bar J_i}^+u_i(x_i), 
\\
\min \Big{\{} \min_{1\leq i\leq N} G_{i}(u,p_i,O) , \ \sum_{1\leq i\leq N} -p_i  - B \Big{\}} \leq 0
& \mbox{if} & x=O, \; {\bf p} \in D_{\Gamma}^+u(O),
\\
  \min \Big{\{} G_{i}(u,p_i,x)  , \ u(x)-h_i \Big{\}} \leq 0
& \mbox{if} & x=\ver_i, \; p_i\in D^+_{\bar J_i}u_i(a_i).
\end{eqnarray*}
The same equivalence holds for a $LSC$ supersolution with usual adaptations
(opposite inequalities, ``min'' replaced with ``max'' and the superdifferential
$D^+$ is replaced with the subdifferential $D^-$).
\end{lema}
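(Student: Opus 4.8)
The plan is to prove the equivalence between the test-function formulation (Definition~\ref{defi-visco}) and the ``$\delta=0$'' formulation in terms of sub/superdifferentials. I will only treat the subsolution case at the three types of points, the supersolution case being symmetric. The core observation is the passage $\delta \downarrow 0$ in the quantity $G_i^\delta$, which requires that the limit $\I_i u(x) = \lim_{\delta\downarrow 0}\I_i[B_\delta^c(x)]u(x)$ exists in $[-\infty,+\infty)$: this is exactly guaranteed by Lemma~\ref{lem-F}, since in every case considered there is a $C^1$ test function touching $u$ from above at $x$, so $u\in\mathcal F_x$. Thus $G_i(u,p,x)$ is well-defined (possibly $-\infty$) and equals $\lambda u(x) - \I_i u(x) + H_i(x,p)$.

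First I would treat the interior case $x\in E_i$. For the ``only if'' direction, given $p_i\in D^+_{\bar J_i}u_i(x_i)$, I construct (as noted right after the definition of $D^+_\Gamma$) a test function $\varphi\in C^1(\Gamma)$ with $\varphi_{x_i}(x)=p_i$ such that $u-\varphi$ has a local maximum at $x$, say in $B_{\delta_0}(x)$; then Definition~\ref{defi-visco} gives $G_i^\delta(u,\varphi,\varphi_{x_i}(x),x)\le 0$ for all $\delta\in(0,\delta_0)$. Splitting the nonlocal term, the ``bad'' piece $\I_i[B_\delta^c(x)]u(x)$ is independent of $\varphi$, while $\I_i[B_\delta(x)]\varphi(x)\to 0$ by dominated convergence (as in the proof of Lemma~\ref{lem-F}); letting $\delta\downarrow 0$ yields $G_i(u,p_i,x)\le 0$. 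For the ``if'' direction, given an arbitrary $\varphi\in C^1(\Gamma)$ and $\delta>0$ with $x$ a max point of $u-\varphi$ in $B_\delta(x)$, I note $p_i:=\varphi_{x_i}(x)\in D^+_{\bar J_i}u_i(x_i)$, so by hypothesis $G_i(u,p_i,x)\le 0$; I must then climb back up to $G_i^\delta(u,\varphi,p_i,x)\le 0$ for that particular $\delta$. This uses that, since $w_\delta:=\varphi\chi_{B_\delta(x)}+u\chi_{B_\delta^c(x)}\le w_{\delta'}$ is monotone in $\delta$, the map $\delta\mapsto \I_i w_\delta(x) = \I_i[B_\delta(x)]\varphi(x)+\I_i[B_\delta^c(x)]u(x)$ is nonincreasing as $\delta\downarrow 0$ (because $\varphi\ge u$ on $B_\delta(x)$, shrinking the ball only lowers the integrand), hence $\I_i[B_\delta(x)]\varphi(x)+\I_i[B_\delta^c(x)]u(x) \ge \I_i u(x)$; consequently $G_i^\delta(u,\varphi,p_i,x) \le G_i(u,p_i,x)\le 0$, which is precisely the viscosity inequality.

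Next I would handle the two vertex cases, which are formally identical once the interior argument is in place. At $x=O$: for ``only if,'' given ${\bf p}\in D^+_\Gamma u(O)$, pick $\varphi\in C^1(\Gamma)$ with $\varphi_{x_i}(O)=p_i$ and $u-\varphi$ having a max at $O$ in $B_{\delta_0}(O)$; the Kirchhoff term $\sum_i(-\varphi_{x_i}(O))-B = \sum_i(-p_i)-B$ is already $\varphi$-free, and for each branch the nonlocal term behaves exactly as above, so passing to the limit in $\min\{\min_i G_i^\delta(\cdots), \sum_i(-\varphi_{x_i}(O))-B\}\le 0$ gives the claimed inequality (here I use that if the min of finitely many quantities is $\le 0$ for all small $\delta$ then, along a subsequence, the same index realizes it, and its $\delta$-limit is $\le 0$; alternatively, monotonicity of each $G_i^\delta$ makes the limit of the min equal to the min of the limits). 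For ``if,'' given arbitrary $\varphi$ and $\delta$, set $p_i=\varphi_{x_i}(O)$, note ${\bf p}\in D^+_\Gamma u(O)$, apply the hypothesis, and use the same monotonicity inequality $\I_i[B_\delta(O)]\varphi(O)+\I_i[B_\delta^c(O)]u(O)\ge \I_i u(O)$ branch by branch to get $G_i^\delta(u,\varphi,p_i,O)\le G_i(u,p_i,O)$, whence $\min\{\min_i G_i^\delta,\sum_i(-p_i)-B\}\le\min\{\min_i G_i,\sum_i(-p_i)-B\}\le 0$. The boundary case $x=\ver_i$ is literally the interior argument with the extra, $\delta$- and $\varphi$-independent term $u(x)-h_i$ carried along unchanged inside the $\min$.

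The main obstacle I anticipate is purely bookkeeping around infinite values: one must make sure that $\I_i u(x)=-\infty$ is permitted and that in that case the inequalities $G_i(u,p_i,x)\le 0$ hold trivially while the approximations $G_i^\delta$ stay above $G_i$, so nothing breaks; and one must be careful that in the $\min$/$\max$ at $O$ the exchange of limit and min (resp.\ max for supersolutions) is justified --- this is where the monotonicity of $\delta\mapsto \I_i[B_\delta(x)]\varphi(x)+\I_i[B_\delta^c(x)]u(x)$ established in the proof of Lemma~\ref{lem-F} does the real work, since monotone convergence of each entry makes $\lim_\delta \min_i(\cdot) = \min_i \lim_\delta(\cdot)$. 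Everything else is a routine translation between ``there exists a touching $C^1$ function with prescribed one-sided slopes'' and ``the one-sided slope lies in $D^\pm$,'' which is recorded in the paragraph preceding the statement.
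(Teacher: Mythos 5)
Your proof is correct and follows essentially the same route as the paper's: touching test functions realize the sub/superdifferentials, Lemma~\ref{lem-F} guarantees $u\in\mathcal F_x$ so the $\delta\downarrow 0$ limit exists, the $\min$ at $O$ is handled by extracting a subsequence over the finitely many edges, and the converse rests on $\I_i[B_\delta(x)]u(x)\le \I_i[B_\delta(x)]\varphi(x)$, i.e.\ $G_i^\delta \le G_i$. One small slip in your closing remarks: if $\I_i u(x)=-\infty$ then $G_i(u,p_i,x)=+\infty$, so $G_i\le 0$ \emph{fails} rather than ``holds trivially,'' and $G_i^\delta$ lies \emph{below} $G_i$ (as your own converse step asserts), not above --- neither misstatement affects the body of the argument, where the index actually used always has $\I_i u(x)$ finite.
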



\begin{proof}
We only provide the proof for subsolutions at $x=O$, the other cases being similar or simpler.

Assume that $u$ is a subsolution at $O$ in the sense of Definition~\ref{defi-visco}.
Let  ${\bf p} \in D_{\Gamma}^+u(O)$. There exists $\varphi\in C^1(\Gamma)$ such that
$u-\varphi$ has a local maximum at $O$ with $\varphi_{x_i}(O)=p_i$ for all $i$. From Lemma~\ref{lem-F},
$u\in \mathcal {F}_O$ and $-\I_i u(O)\in (-\infty,+\infty]$ for all $i$.
Using the subsolution characterization of Definition~\ref{defi-visco} at $x=O$, we
have that, either $\sum_i -\varphi_{x_i}(O)-B\leq 0$, or,
for every $\delta >0$,
there exists $1\leq i\leq N$, $i=i(\delta)$, such that
$G_i^\delta (u,\varphi, \varphi_{x_i}(O), O)\leq 0$.
In the first case, we conclude that $\sum_i -p_i-B\leq 0$ since  $\varphi_{x_i}(O)=p_i$.
We now suppose that the second case holds.
Since there is a finite number of edges, there exists $1\leq i_0\leq N$
and a subsequence $\delta_k\downarrow 0$ such that
$G_{i_0}^{\delta_k} (u,\varphi, \varphi_{x_{i_0}}(O), O)\leq 0$.
Since $u\in \mathcal {F}_O$, we can send $\delta_k$ to 0 in the previous
inequality to obtain $G_{i_0} (u,\varphi_{x_{i_0}}(O), O)\leq 0$, which is
the expected inequality in Lemma~\ref{equiv-def}. Note that, in this case,
we obtain that  $-\I_{i_0} u(O)$ is finite.

Conversely, assume that $u$ satisfies the inequality of Lemma~\ref{equiv-def} at $x=O$.
Let $\varphi\in C^1(\Gamma)$ such that $u-\varphi$ has a global maximum at $O$.
Then  ${\bf p}:= (\varphi_{x_1}(O), \cdots ,  \varphi_{x_N}(O)) \in D_{\Gamma}^+u(O)$.
If $\sum_i -\varphi_{x_i}(O)-B\leq 0$, then the viscosity inequality for
the subsolution at $x=O$ in Definition~\ref{defi-visco} holds and we are
done. Otherwise, there exists $i$ such that $G_{i} (u,\varphi_{x_{i_0}}(O), O)\leq 0$.
In particular, from Lemma~\ref{lem-F}, $-\I_{i} u(O)$ is finite.
Then, for all $\delta>0$, we have
\begin{eqnarray*}
  G_i^\delta (u,\varphi, \varphi_{x_i}(O), O)
  &=& G_{i} (u,\varphi_{x_{i_0}}(O), O) \\
  &&   \I_{i} u(O) - \I_i  [B_\delta^c (O)] u(O) - \I_i  [B_\delta (O)] \varphi(O) \\
  &\leq &   \I_i  [B_\delta (O)] u(O) -  \I_i  [B_\delta (O)] \varphi(O) \leq 0,
\end{eqnarray*}
the last inequality coming from the fact that,
$O$ being a maximum of $u-\varphi$, we have
$u-u(O)\leq \varphi -\varphi(O)$ in  $B_\delta (O)$, hence
$\I_i  [B_\delta (O)] u(O) \leq  \I_i  [B_\delta (O)] \varphi(O)$.
We conclude that $u$ is a subsolution in the sense of  Definition~\ref{defi-visco}.
\end{proof}

Finally, the above definitions can be extended to second-order equations like~\eqref{eqvanish}, with test functions $\varphi\in C^2(\Gamma)$. Moreover, every classical solution to the problem, that is, a function $C^2(\Gamma)$ such that~\eqref{eq} or~\eqref{eqvanish} complemented by~\eqref{Kirchhoff}-\eqref{dirichlet-bc} holds pointwisely, 
is a viscosity solution in both senses above. 



\section{Existence for the Kirchhoff-type problem on a junction.}
\label{sec:exis-junct}


Let $\theta \in \R$ and $\epsilon \in (0,1)$. The purpose of this section is to construct a function $u^\epsilon \in C(\Gamma)$ solving
the viscous Dirichlet problem
\begin{equation}\label{eqaprox0}
  \left \{ \begin{array}{l} \lambda u - \epsilon u_{x_i x_i} - \I_i u + H_i(x, u_{x_i}) = 0
    \quad \mbox{on} \ E_i, 1\leq i\leq N, \\
u(O) = \theta, \\
u_i(a_i) = h_i, \quad 1\leq i\leq N.
  \end{array} \right .
\end{equation}

The boundary condition at $a_i$ is considered fixed since it coincides with the boundary condition of the original problem. The more careful analysis is performed on the dependence of the solution in terms of $\theta$, since we will require to fix it appropriately to find a solution satisfying the Kirchhoff condition at $O$. 

We start by constructing viscosity sub- and supersolutions to~\eqref{eqaprox0}, which achieve
the boundary conditions pointwisely.

We fix $\alpha\in (0,1)$, and, for all $L, \delta > 0$, we define
\begin{equation}\label{defpsi}
\psi_{L,\delta}(x) = 
\left\{\begin{array}{ll}    
L(x - x^{1 + \alpha}), & \mbox{for} \ 0 < x \leq \delta,\\
 \psi(\delta)  & \mbox{for} \ x > \delta,\\
\end{array}\right.
\end{equation}
and we introduce
\begin{equation}\label{defC0}
C_0 := \frac{1}{\min \{ \lambda, 1 \}} \Big{(}  \max_{1\leq i\leq N, \, x\in \bar{J_i}} |H_i(x,0)| + |\theta| + \max_{1\leq i\leq N} |h_i| \Big{)}.
\end{equation}

\begin{lema}\label{sous-sur-sol}
Under the assumption~\eqref{steady},
there exist $L,\delta$ depending on the data, and $L_\epsilon, \delta_\epsilon$
depending on the data and the ellipticity constant $\epsilon$, such that
the functions $\psi^+$, $\psi^-: \Gamma\to \R$ defined by
\begin{eqnarray*}
&& \psi^+_i(x)= \min \{ C_0, \theta + \psi_{L,\delta}(x), h_i+ \psi_{L,\delta}(a_i-x)\}, \quad x\in \bar{J}_i,\\
&& \psi^-_i(x)= \max \{ -C_0, \theta - \psi_{L_\epsilon,\delta_\epsilon}(x), h_i- \psi_{L_\epsilon,\delta_\epsilon}(a_i-x)\}, \quad x\in \bar{J}_i,
\end{eqnarray*}
are continuous viscosity supersolution and subsolution, respectively, of~\eqref{eqaprox0},
satisfying $\psi^\pm(O)=\theta$ and $\psi_i^\pm(a_i)=h_i$.
\end{lema}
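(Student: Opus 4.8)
The plan is to verify directly that the explicitly constructed functions $\psi^+$ and $\psi^-$ satisfy the viscosity inequalities in Definition~\ref{defi-visco} (in its second-order form adapted to \eqref{eqaprox0}), exploiting the fact that each is a minimum (resp.\ maximum) of finitely many smooth functions. Since $\psi^+$ is a minimum of smooth functions it is a supersolution wherever the min is attained by a smooth branch, so the real work is at the non-smooth points and at the three distinguished types of points ($O$, interior of $E_i$, and the boundary vertices $\ver_i$); a symmetric analysis handles $\psi^-$.

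First I would record the elementary facts about $\psi_{L,\delta}$: for $x\in(0,\delta)$ one has $\psi_{L,\delta}'(x)=L(1-(1+\alpha)x^\alpha)$ and $\psi_{L,\delta}''(x)=-L\alpha(1+\alpha)x^{\alpha-1}<0$, so $\psi_{L,\delta}$ is concave and increasing near $0$ (for $\delta$ small), with $\psi_{L,\delta}(0)=0$ and $\psi_{L,\delta}'(0^+)=L$; moreover $\psi_{L,\delta}$ is bounded by $L\delta$. The second derivative blows up like $x^{\alpha-1}$ near the endpoint, which is exactly the mechanism that beats the $\epsilon u_{xx}$ term. Then, to see $\psi^+$ is a supersolution of the PDE in the interior of $E_i$: at a point where the active branch is the constant $C_0$, the definition of $C_0$ in \eqref{defC0} together with the growth bound \eqref{H}(iii) on $H_i(x,0)$ and the sign $-\I_i(\text{const})=0$ (the nonlocal operator kills constants, but here $\psi^+$ is not globally constant, so one uses instead that $\psi^+\le C_0$ everywhere to get $-\I_i\psi^+(x)\ge 0$ at such a point) gives $\lambda C_0 - 0 + H_i(x,0)\ge 0$. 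At a point where the branch $\theta+\psi_{L,\delta}(x)$ (or the symmetric one) is active, I would use concavity to kill the viscous term ($-\epsilon\psi_{L,\delta}''\ge 0$), the bound $-\I_i\psi^+(x)\ge -\I_i[\text{something}]\ge -C\|\psi^+\|_\infty^{?}$ — more precisely, since $\psi^+$ is bounded and Hölder, Lemma~\ref{regI} or a direct estimate via \eqref{Levy1} bounds $|\I_i\psi^+(x)|$ by a constant depending only on the data and on $L\delta$ — and then the coercivity \eqref{H}(iii), $H_i(x,p)\ge C_H^{-1}|p|-C_H$, applied with $p=\psi_{L,\delta}'(x)$ which is close to $L$: choosing $L$ large (depending on $\lambda$, $C_H$, $\Lambda$, $\sigma$, the $a_i$'s) makes $C_H^{-1}L$ dominate all the lower-order terms, so the inequality holds. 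For the supersolution at a non-smooth (min-switching) point, one uses the standard fact that the superdifferential of a min of $C^2$ functions at such a point is contained in (the convex hull of) the gradients of the active branches, and the inequality for each branch transfers; in the viscosity formulation one simply notes no smooth $\varphi$ can touch $\psi^+$ from below there with a derivative outside that set, or handles it via Lemma~\ref{equiv-def}.

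Next, the junction condition at $O$: by construction the active branch of $\psi^+$ near $O$ is $\theta+\psi_{L,\delta}(x_i)$ on each edge (for $\delta$ small, since this branch starts at $\theta\le C_0$ and increases, while $h_i+\psi_{L,\delta}(a_i-x)$ near $x=0$ is roughly $h_i+\psi(\delta)$ which we may arrange to be $\ge\theta+\psi_{L,\delta}(x_i)$ near $0$ by taking $\delta$ small — if not, the min with $C_0$ or with the $h_i$-branch only makes $\psi^+$ smaller near $O$, and for the supersolution inequality $\max\{\ldots,\sum_i-p_i-B\}\ge 0$ it suffices that \emph{one} term is $\ge 0$). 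Thus $\psi^+_{x_i}(O)=L$ for every $i$, so $\sum_i -\psi^+_{x_i}(O)-B=-NL-B$, which is $<0$ once $L$ is large; hence the supersolution test at $O$ reduces to checking $\max_i G_i(\psi^+,L,O)\ge 0$, i.e.\ the PDE inequality with $p=L$, which is exactly what the coercivity/large-$L$ argument above delivers (and here there is genuinely no viscous contribution to worry about in the first-order reading at $O$; in the $C^2$ test-function reading one still uses $-\epsilon\varphi''(O)\ge$ a controlled quantity, or notes $\psi^+$ is not $C^2$ at $O$ but the inward second derivative is $-\infty$, which only helps). The Dirichlet vertices $\ver_i$: there $\psi^+_i(a_i)=h_i$ exactly, so the term $u(\ver_i)-h_i=0$ and the supersolution inequality $\max\{G_i,\,u-h_i\}\ge 0$ holds trivially; similarly $\psi^-_i(a_i)=h_i$. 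One must also check $\psi^+(O)=\theta$ and $\psi^-(O)=\theta$, which hold because $\psi_{L,\delta}(0)=0$ and because $\theta\in[-C_0,C_0]$ by the definition \eqref{defC0}.

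The subsolution $\psi^-$ is handled symmetrically, with the crucial asymmetry that the constants $L_\epsilon,\delta_\epsilon$ are allowed to depend on $\epsilon$: this is needed because for the subsolution the viscous term $-\epsilon(\psi^-)_{x_ix_i}=+\epsilon\,\psi_{L_\epsilon,\delta_\epsilon}''<0$ (convex piece with a sign now working \emph{against} us) must be dominated, and $\psi_{L_\epsilon,\delta_\epsilon}''\sim -L_\epsilon\alpha(1+\alpha)x^{\alpha-1}$ is integrable but large; one chooses $\delta_\epsilon$ small enough that $\epsilon L_\epsilon\alpha(1+\alpha)\delta_\epsilon^{\alpha-1}$ is controlled, or rather chooses $L_\epsilon$ large and then $\delta_\epsilon$ small so that on $(0,\delta_\epsilon)$ the coercive gain from $-H_i\le C_H-C_H^{-1}L_\epsilon$ (note $\psi^-$ has derivative $\approx -L_\epsilon$, and $H_i(x,-L_\epsilon)\ge C_H^{-1}L_\epsilon - C_H$, so $-H_i(x,-L_\epsilon)\le C_H - C_H^{-1}L_\epsilon$, large negative) beats $\epsilon|\psi''|$; meanwhile away from the endpoints everything is uniform. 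I expect the main obstacle to be the bookkeeping at $O$ — making sure that for small $\delta$ (resp.\ $\delta_\epsilon$) the active branch near $O$ is indeed the $\psi_{L,\delta}(x_i)$ branch on every edge so that the Kirchhoff quantity $\sum -\psi^\pm_{x_i}(O)-B$ has the right sign ($<0$ for the supersolution forcing us into the PDE alternative, which we can satisfy; $>0$ for the subsolution, i.e.\ $NL_\epsilon - B>0$, which immediately satisfies the subsolution Kirchhoff alternative $\min\{\ldots,\sum -p_i-B\}\le 0$ is the \emph{wrong} direction — in fact for the subsolution we need $\min\le 0$, and $\sum_i -(-L_\epsilon)-B = NL_\epsilon-B$ which is $>0$, so the subsolution must instead satisfy the PDE alternative at $O$, namely $\min_i G_i(\psi^-,-L_\epsilon,O)\le 0$, again delivered by coercivity with the sign of $H$ making $G_i$ very negative). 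Sorting out these sign chases and fixing the order of quantifiers ($L$ first, then $\delta$; $L_\epsilon$ first, then $\delta_\epsilon$ depending on $\epsilon$) is the delicate part; the PDE interior estimates and the nonlocal-term bound via \eqref{Levy1}/Lemma~\ref{regI} are routine once the constants are organized.
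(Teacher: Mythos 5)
Your supersolution half is essentially the paper's argument: reduce to showing that each branch $\theta+\psi_{L,\delta}(x)$ and $h_i+\psi_{L,\delta}(a_i-x)$ is itself a supersolution (minimum of supersolutions), bound the nonlocal term of the barrier by a quantity of order $L\delta^{1-\sigma}$ (up to a logarithm), and let the coercivity \eqref{H}(iii), which gives $H_i(x,\psi_x)\ge C_H^{-1}L/2-C_H$ since $\psi_x\ge L/2$ on $(0,\delta)$, absorb everything once $L$ is large and $\delta$ small. One side remark: problem \eqref{eqaprox0} carries the \emph{Dirichlet} condition $u(O)=\theta$ at the junction, not the Kirchhoff condition, so your discussion of $\sum_i-\psi^{\pm}_{x_i}(O)-B$ concerns the wrong problem; since $\psi^{\pm}(O)=\theta$ pointwise, there is nothing further to verify at $O$.

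The subsolution half, however, contains a genuine sign error that inverts the whole mechanism. In $G_i(u,p,x)=\lambda u-\epsilon u_{xx}-\I_i u+H_i(x,p)$ the Hamiltonian enters with a \emph{plus} sign, and coercivity makes $H_i(x,-L_\epsilon)\ge C_H^{-1}L_\epsilon-C_H$ large \emph{positive}: for the subsolution inequality $G_i\le 0$ this is the \emph{obstruction}, not the ``coercive gain'' you invoke (your ``$-H_i\le C_H-C_H^{-1}L_\epsilon$, large negative'' estimates a term $-H_i$ that simply does not appear in $G_i$). Conversely, the viscous contribution $-\epsilon(\theta-\psi)_{xx}=-\epsilon L\alpha(1+\alpha)x^{\alpha-1}<0$ is the term that \emph{helps}, not the one that ``must be dominated''. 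The correct mechanism, which the paper uses, is to bound $H_i(x,-\psi_x)\le C_H(1+L)$ from above via \eqref{H}(iii) and then choose $\delta_\epsilon\le\bigl(\epsilon(1+\alpha)\alpha/(C_\sigma+C_H+1)\bigr)^{1/(1-\alpha)}$ so that on $(0,\delta_\epsilon)$ the blowing-up viscous term dominates $C_H(1+L)$ plus the nonlocal error; this is precisely why $\delta_\epsilon$ degenerates as $\epsilon\to 0$, why $\delta_\epsilon$ must be fixed \emph{before} $L_\epsilon$ (one needs $L_\epsilon\ge 2\delta_\epsilon^{-1}(\theta+C_0)$, the opposite of your stated order of quantifiers), and why the lower barrier is lost in the vanishing viscosity limit (Remark~\ref{loss-en-dessous}). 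As written, your reasoning would yield an $\epsilon$-independent lower barrier, contradicting the statement of the lemma itself.
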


\begin{proof}
Throughout the proof, we systematically identify $\bar E_i$ with $\bar J_i$, $x$ with $x_i$, and $\psi$ with $\psi_i$.

At first, notice that the constant functions $x\in\Gamma \mapsto \pm C_0$, where $C_0$ is defined by~\eqref{defC0},
are obviously supersolution and subsolution of~\eqref{eqaprox0}, respectively. But they do not
satisfy the boundary condition pointwisely.
  
We divide the proof in several parts.
\smallskip

\noindent{\it Supersolution property for $\psi^+$.}
Since the minimum of supersolutions is
still a supersolution, it is enough to prove that, for $1\leq i \leq N$, both
$\theta + \psi_{L,\delta}(x)$ and $h_i+ \psi_{L,\delta}(a_i-x)$ are supersolutions
in $\bar{J}_i$ for $L$ enough large and $\delta$ enough small. The two proofs being similar,
we concentrate on $\theta + \psi_{L,\delta}(x)$.

  
For simplicity, we write $\psi$ for $\psi_{L,\delta}$.
Straightforward computations drive us to
\begin{align*}
\psi_x(x) = L(1 - (1 + \alpha) x^\alpha), \quad \psi_{xx}(x) = -L(1 + \alpha) \alpha x^{\alpha - 1}, \quad x \in [0, \delta).
\end{align*}

%

Set $\delta < (2(1+\alpha))^{-1/\alpha}$ and $\delta < \min_i a_i$. Then,
we have $L/2\leq \psi_x(x) \leq L$ for $x\in [0,\delta)$
and $L\delta /2\leq \psi(x)= \psi(\delta) \leq L\delta$ for $x\in [\delta, a_i]$.
In particular, $\psi$ is nondecreasing and Lipschitz continuous with constant $L$ in each $\bar J_i$.

Now we estimate the nonlocal operators. 
Let $x \in \bar E_i$ with $x_i \in \bar J_i \cap [0,\delta]$. We have that
\begin{eqnarray*}
| \I_{ii} \psi(x) |
&\leq&
\int_{J_i \cap \{ \rho(x, \gamma_i(z)) \leq \delta \}} |\psi_i(z)  - \psi(x)|  \nu_{ii}(x, \rho(x,\gamma_i(z))) dz \\
&& + \int_{J_i \cap \{ \rho(x,\gamma_i(z)) > \delta \}} |\psi_i(z)  - \psi(x)| \nu_{ii}(x, \rho(x,\gamma_i(z))) dz\\ 
&\leq &
\Lambda L \int_{0}^{x_i+\delta} |x_i - z||x_i - z|^{-1-\sigma} dz \\
&& + \Lambda \psi(\delta)^{1-\sigma}  \int_{x_i+\delta}^{a_i} (L|x_i - z|)^{\sigma} |x_i - z|^{-1-\sigma} dz\\
&\leq& \Lambda L \int_{0}^{x_i + \delta} |x_i - z|^{-\sigma} dz + \Lambda L \delta^{1 - \sigma} \int_{\delta}^{a_i} |x_i - z|^{-1} dz\\
&\leq &
\Lambda L \left(\frac{2^{1-\sigma}}{1-\sigma}+ \log a_i - \log \delta \right) \delta^{1-\sigma}.
\end{eqnarray*}
For $1\leq j\leq N$, $j \neq i$, similarly we have
\begin{eqnarray*}
| \I_{ij} \psi(x)|
&\leq&
\int_{J_j \cap \{ \rho(x,\gamma_j(z)) \leq 2\delta \}} |\psi_j(z)  - \psi_i(x_i)|  \nu_{ij}(x, \rho(x,\gamma_j(z))) dz \\
&& + \int_{J_j \cap \{ \rho(x,\gamma_j(z)) > 2\delta \}} |\psi_j(z)  - \psi_i(x_i)|  \nu_{ij}(x, \rho(x,\gamma_j(z))) dz\\ 
&\leq &
\Lambda L \int_{0}^{2\delta - x_i} |x_i + z|^{-\sigma} dz + \Lambda L \delta^{1 - \sigma} \int_{\delta}^{a_j} |x_i + z|^{-1} dz\\
&\leq &
\Lambda L \left(\frac{2^{1-\sigma}}{1-\sigma}+ \log a_i - \log \delta \right) \delta^{1-\sigma}.
\end{eqnarray*}
We summarize the nonlocal estimates as
\begin{equation}\label{Ipsi-full}
|\I_i \psi(x)| \leq C_\sigma (1 +|\log \delta|) L \delta^{1 - \sigma}, \ 1 \leq i \leq N,
\end{equation}
where $C_\sigma > 0$ depending on the data is of order $(1 - \sigma)^{-1}$ as $\sigma\to 1^-$  but remains stable as $\sigma \to 0^+$.


We use the notation  $G_i^\delta, G_i$ of Section~\ref{sec:def-visco}, adding 
the vanishing viscosity term accordingly.

From the above computations together with the coercivity assumption in~\eqref{H}, for all $i$ and all $x\in J_i \cap (0,\delta)$, we have
\begin{eqnarray*}
&& G_i (\theta +\psi, \psi_{x_i}(x), x)\\
&=& \lambda (\theta +\psi) - \epsilon \psi_{x_i x_i} - \I_i \psi(x) + H_i(x, \psi_{x_i}(x))\\
  &\geq & \lambda \theta +\epsilon L (1 + \alpha) \alpha x^{\alpha - 1}
-  C_\sigma (1 +|\log \delta|) L \delta^{1 - \sigma}
  + C_H^{-1}\frac{L}{2}-C_H.
\end{eqnarray*}
Since $\sigma < 1$, it is possible to choose
$\delta$ possibly smaller in order to
$$
C_\sigma (1 +|\log \delta|) \delta^{1 - \sigma} \leq \frac{1}{4 C_H}.
$$
Under this choice, and taking $L\geq 8 C_H^2$, we obtain
\begin{eqnarray*}
G_i (\theta+\psi, \psi_{x_i}(x), x)
\geq \lambda \theta + \frac{1}{8C_H} L
\quad \text{on $J_i \cap (0,\delta)$}.
\end{eqnarray*}
Choosing $L> 8 C_H \lambda |\theta|$, the left-hand side of the
above inequality is nonnegative, from which $\theta +\psi$ is a viscosity supersolution
in $J_i\cap (0,\delta)$.

Similar computations yield that $h_i +\psi(a_i-x)$ is a supersolution on $(a_i - \delta, a_i)$,
choosing $L$ large and $\delta$ small  enough. 
 Enlarging $L$ if necessary, we can assume that 
 $$
 \theta + \psi(\delta), h_i + \psi_L(\delta) \geq C_0.
 $$
 
%
%
Then, the value $C_0$ is active in the $\min$ defining $\psi^+$ on $(\delta, a_i - \delta)$, from which we infer the viscosity inequality on the full interval $J_i$.
Thus, $\psi^+$ is a viscosity supersolution for the problem with $\psi^+(O) = \theta$ and $\psi^+_i(a_i) = h_i$.

\smallskip

\noindent{\it Subsolution property for $\psi^-$.}
As, for the construction of the supersolution, the property that a maximum of subsolutions
is a subsolution allows to check the subsolution property for
$\theta - \psi(x)$ and $h_i- \psi(a_i-x)$ in $\bar J_i$ separately. Here also, the computations
being similar, we make them in the case of $\theta - \psi(x)$.

Recalling~\eqref{Ipsi-full},
%
for all $x\in J_i \cap (0,\delta)$ we have
\begin{eqnarray*}
&& G_i(\theta-\psi, -\psi_{x_i}(x), x)\\
&=& \lambda (\theta -\psi) - \epsilon \psi_{x_i x_i} - \I_i \psi(x) + H_i(x, -\psi_{x_i})\\
&\leq&
\lambda \theta -\epsilon L(\alpha +1)\alpha x^{\alpha -1}+ C_\sigma \delta^{1-\sigma}+C_H (1+L)\\
&\leq &
\lambda \theta + C_H + L\left( -\epsilon (\alpha +1)\alpha x^{\alpha -1} +  C_\sigma \delta^{1-\sigma}+C_H\right).
\end{eqnarray*}
Here, we need to use the ellipticity of the PDE~\eqref{eqaprox}, which will
provide a subsolution depending on $\epsilon$.
We first choose
\begin{eqnarray*}
  \delta=\delta_\epsilon \leq \left(\frac{\epsilon (1+ \alpha)\alpha}{C_\sigma + C_H+1}\right)^{\frac{1}{1-\alpha}},
\end{eqnarray*}
yielding
\begin{eqnarray*}
&&  G_i(\theta-\psi, -\psi_{x_i}(x), x)
\leq \lambda \theta +C_H-L, \quad x\in J_i \cap (0,\delta_\epsilon).
\end{eqnarray*}
Choosing $L\geq \lambda |\theta| +C_H$, the left-hand side of the above inequality becomes nonpositive.
Therefore $\theta-\psi$ is subsolution in $(0,\delta_\epsilon)$. Similar computations can be made with $h_i - \psi$ on $(a_i - \delta_\epsilon, a_i)$.

Enlarging $L=L_\epsilon\geq 2\delta_\epsilon^{-1}(\theta +C_0)$, we get
\begin{eqnarray*}
\theta -\psi(\delta), h_i - \psi(\delta) 
\leq -C_0, 
\end{eqnarray*}
from which, as before, we get $\psi_i^-$ is a viscosity subsolution for the problem on $\Gamma$, with $\psi^-(O) = \theta$ and $\psi^-_i(a_i) = h_i$. 
%
\end{proof}


Thanks to the above lemma, the existence of a unique viscosity solution
to~\eqref{eqaprox0} is an immediate consequence of Perron's method and comparison principles for this type of Dirichlet problem on $\Gamma$, see Theorem~\ref{teo-censA} in the Appendix.

\smallskip

However, we give an alternative proof, which consists, roughly speaking,
to construct a solution branch by branch.
The proof uses only Theorem~\ref{teo-censA} in the very particular case
of the nonlocal Dirichlet problem censored to a unique interval,
\begin{equation}\label{edp-cens1}
  \left \{ \begin{array}{l} \lambda u - \epsilon u_{x x} - \I u + H(x, u_{x}) = 0
    \quad \mbox{on} \ (0,1), \\
u(0) = a, \quad  u(1)=b,\\
  \end{array} \right .
\end{equation}
where $\I u=\I_{ii}u$ with $\Gamma=J_i=(0,1)$ (in the sense of Section~\ref{sec:nonlocal}).
We think that our proof below has its own interest
since it is constructive once we can solve~\eqref{edp-cens1}  efficiently.
\medskip

Let us turn to our construction.
For $\eta \in (0,1)$, for each $1\leq i,j\leq N$ and $x \in \bar E_i$, $r>0$, we consider the modified kernel
$$
\nu_{ij}^\eta(x, r) = \min \{ \eta^{-1}, \nu_{ij}(x,r)\},
$$
and the nonlocal operator
$\I_i^\eta$ defined as in~\eqref{def-I} but taking into account this kernel. It is easy to see that
$\nu_{ij}^\eta(x,\rho(x,z))dz_j$ is a bounded measure on $J_j$, which satisfies~\eqref{hyp-nu}
with the same constants $\Lambda, \sigma$, independently of $\eta$. Moreover,
if $u\in C^{0,\gamma}(\Gamma)$, $\gamma >\sigma$, then
we have $\I^\eta u \to \I u$ uniformly in $\Gamma$ as $\eta \to 0$
(the proof is similar to the one of Lemma~\ref{regI}).

\medskip

We consider the following approximate problem
\begin{equation}\label{eqaprox}
  \left \{ \begin{array}{l} \lambda u^{\epsilon, \eta} - \epsilon u_{x_i x_i}^{\epsilon, \eta} - \I_i^\eta u^{\epsilon, \eta} + H_i(x, u^{\epsilon, \eta}_{x_i}) = 0
    \quad \mbox{on} \ E_i, 1\leq i\leq N, \\
u^{\epsilon, \eta}(O) = \theta, \\
u_i^{\epsilon, \eta}(a_i) = h_i, \quad 1\leq i\leq N.
  \end{array} \right .
\end{equation}

\begin{lema}\label{lematildeueps}
Under the assumption~\eqref{steady},
there exists a viscosity solution $u^{\epsilon, \eta} \in C(\Gamma)$ to problem~\eqref{eqaprox}, and we have the estimate
\begin{equation}\label{bound_uieps}
\| u^{\epsilon, \eta} \|_{L^\infty(\Gamma)} \leq C_0,
\end{equation}
where $C_0$ is defined in~\eqref{defC0}.
\end{lema}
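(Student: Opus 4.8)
The plan is to realize the branch-by-branch construction announced above: we will decouple the $N$ edges by freezing, in the nonlocal term, the contribution of the \emph{other} edges, solve the resulting single-interval censored problems by Theorem~\ref{teo-censA}, and recover a solution of~\eqref{eqaprox} as a fixed point. The role of the truncation $\eta$ is crucial here: each kernel satisfies $\nu_{ij}^\eta\le\min\{\eta^{-1},\Lambda r^{-1-\sigma}\}$, so it is integrable up to $r=0$ and the operators $\I_{ij}^\eta$ (defined as in~\eqref{defIij} with $\nu_{ij}^\eta$ in place of $\nu_{ij}$) are bounded and continuous on all of $C(\Gamma)$. Fix $\epsilon,\eta\in(0,1)$. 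To keep the construction order preserving, I would split the exterior part into its nonnegative integral piece $M_{ij}^\eta v(x):=\int_{E_j}v(z)\,\nu_{ij}^\eta(x,\rho(x,z))\,dz$ and the zeroth-order ``killing'' piece with coefficient $c_{ij}^\eta(x):=\int_{E_j}\nu_{ij}^\eta(x,\rho(x,z))\,dz\ge0$, so that $\I_{ij}^\eta v(x)=M_{ij}^\eta v(x)-c_{ij}^\eta(x)v(x)$. Then, for $v\in C(\Gamma)$, define $Tv\in C(\Gamma)$ by letting $(Tv)_i$ be, on $E_i$, the unique viscosity solution given by Theorem~\ref{teo-censA} of
\begin{equation*}
\Big(\lambda+\textstyle\sum_{j\ne i}c_{ij}^\eta(x)\Big)u_i-\epsilon u_i''-\I_{ii}^\eta u_i+H_i(x,u_i')=\textstyle\sum_{j\ne i}M_{ij}^\eta v(x)\ \text{ on }E_i,\quad u_i(0)=\theta,\ u_i(a_i)=h_i,
\end{equation*}
which is of the form~\eqref{edp-cens1} (modulo a nonnegative continuous zeroth-order coefficient and a bounded continuous source, both harmless for Theorem~\ref{teo-censA}); the common value $\theta$ at the origin glues the $u_i$ into $Tv$.

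Next I would check the properties of $T$ needed for Schauder's theorem on the set $\mathcal K:=\{v\in C(\Gamma):\psi^-\le v\le\psi^+\}$, with $\psi^\pm$ the sub/supersolutions of Lemma~\ref{sous-sur-sol} (for which $\psi^-\le\psi^+$, clear from their explicit expressions). \emph{Continuity:} if $v_n\to v$ in $C(\Gamma)$ then $M_{ij}^\eta v_n\to M_{ij}^\eta v$ uniformly, so stability of viscosity solutions and uniqueness in Theorem~\ref{teo-censA} give $Tv_n\to Tv$. \emph{Invariance:} the nonlocal bound~\eqref{Ipsi-full} and the whole computation in the proof of Lemma~\ref{sous-sur-sol} use only $0\le\nu_{ij}\le\Lambda r^{-1-\sigma}$, hence go through verbatim with $\nu_{ij}^\eta$, so $\psi^+$ (resp. $\psi^-$) is a supersolution (resp. subsolution) of~\eqref{eqaprox}; rewriting $\sum_{j\ne i}\I_{ij}^\eta\psi^\pm=\sum_{j\ne i}M_{ij}^\eta\psi^\pm-\sum_{j\ne i}c_{ij}^\eta\,\psi^\pm$ then shows that $\psi^+$ (resp. $\psi^-$) is a supersolution (resp. subsolution) of the frozen equation above with source $\sum_{j\ne i}M_{ij}^\eta\psi^+$ (resp. $\sum_{j\ne i}M_{ij}^\eta\psi^-$); since $v\in\mathcal K$ gives $M_{ij}^\eta\psi^-\le M_{ij}^\eta v\le M_{ij}^\eta\psi^+$, the comparison principle for~\eqref{edp-cens1} yields $\psi^-\le T\psi^-\le Tv\le T\psi^+\le\psi^+$. \emph{Compactness:} for $v\in\mathcal K$ the source is bounded by $C_\eta C_0$ with $C_\eta$ independent of $v$, so the regularity estimates of Theorem~\ref{teo-censA} (for $\epsilon,\eta$ fixed) bound $Tv$ in $C^{1,1}(\bar E_i)$ uniformly in $v$, and $T(\mathcal K)$ is precompact in $C(\Gamma)$ by Arzel\`a--Ascoli.

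Schauder's theorem then gives $u^{\epsilon,\eta}\in\mathcal K$ with $Tu^{\epsilon,\eta}=u^{\epsilon,\eta}$. By construction, on each $E_i$ one has $\big(\lambda+\sum_{j\ne i}c_{ij}^\eta(x)\big)u_i-\epsilon u_i''-\I_{ii}^\eta u^{\epsilon,\eta}+H_i(x,u_i')=\sum_{j\ne i}M_{ij}^\eta u^{\epsilon,\eta}$, which, moving the killing term back to the left, reads $\lambda u^{\epsilon,\eta}-\epsilon u^{\epsilon,\eta}_{x_ix_i}-\I_i^\eta u^{\epsilon,\eta}+H_i(x,u^{\epsilon,\eta}_{x_i})=0$ on $E_i$ in the viscosity sense; together with $u^{\epsilon,\eta}(O)=\theta$ and $u^{\epsilon,\eta}_i(a_i)=h_i$ this says $u^{\epsilon,\eta}$ is a viscosity solution of~\eqref{eqaprox}. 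Finally, $u^{\epsilon,\eta}\in\mathcal K$ gives $\psi^-\le u^{\epsilon,\eta}\le\psi^+$, and $\|\psi^\pm\|_{L^\infty(\Gamma)}\le C_0$ holds by the definition of $\psi^\pm$ (the $\min$ / $\max$ with $\pm C_0$, together with $|\theta|\le C_0$ and $\max_i|h_i|\le C_0$, both consequences of~\eqref{defC0}); this is exactly~\eqref{bound_uieps}.

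The hard part will be assembling the three properties of $T$, which hinges on Theorem~\ref{teo-censA} providing for~\eqref{edp-cens1} not only well-posedness but also a comparison principle and regularity estimates uniform in the $L^\infty$-size of the (merely continuous) source and in the nonnegative zeroth-order coefficient; apart from that, the genuinely new input is only the elementary boundedness, continuity and monotonicity of the truncated integral operators $M_{ij}^\eta$ on bounded continuous functions. An equivalent route is the monotone iteration $u^0=\psi^-$, $u^{n+1}=Tu^n$: $T$ is order preserving (monotonicity of $M_{ij}^\eta$ plus comparison), so $(u^n)$ is nondecreasing and bounded above by $\psi^+$, and the uniform $C^{1,1}$ bounds promote the pointwise limit to a uniform one, which is the sought fixed point. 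Alternatively, one could skip the decoupling entirely and run Perron's method with comparison for~\eqref{eqaprox} directly on $\Gamma$ using $\psi^\pm$ as barriers; the advantage of the scheme above is that it only uses the one-dimensional Theorem~\ref{teo-censA}.
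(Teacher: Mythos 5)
Your construction is built on exactly the same decoupling as the paper's: you split the exterior part of $\I_i^\eta$ into the positive integral operator $M_{ij}^\eta$ and the killing term $c_{ij}^\eta(x)u$ (the paper's $f_{ij}^k$ and $\lambda_{ij}^\eta$), freeze the other edges, and solve censored one-dimensional Dirichlet problems via Theorem~\ref{teo-censA}. Where you diverge is in how the fixed point of the resulting map $T$ is produced. The paper observes that the frozen problem has zeroth-order coefficient $\lambda+\sum_{j\ne i}\lambda_{ij}^\eta$ while the source is controlled by $\sum_{j\ne i}\lambda_{ij}^\eta\,\|v\|_\infty$, so the iteration contracts with factor $\lambda^*=\max_i\max_x \lambda_i^\eta/\Lambda_i^\eta<1$; the iterates form a Cauchy sequence in $C(\Gamma)$ and the limit is obtained by stability, using nothing beyond the comparison principle for the censored problems. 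You instead invoke Schauder (or monotone iteration) on the order interval $[\psi^-,\psi^+]$. The invariance and order-preservation arguments are sound — the positivity of $M_{ij}^\eta$ and the fact that the sub/supersolution computations of Lemma~\ref{sous-sur-sol} only use the upper bound on the kernels do the work, and your derivation of~\eqref{bound_uieps} from $\psi^-\le u^{\epsilon,\eta}\le\psi^+$ is fine.

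The one step you should not gloss over is the precompactness of $T(\mathcal K)$: Theorem~\ref{teo-censA} as stated provides existence, uniqueness and comparison, but no modulus of continuity for the solution, so there are no ``regularity estimates of Theorem~\ref{teo-censA}'' to cite. To make Schauder (or the uniform convergence of the monotone iterates) work you must prove an equicontinuity bound for $Tv$, $v\in\mathcal K$, e.g.\ by rerunning the Lipschitz estimates of Proposition~\ref{prop1}\,(iii) for the frozen problems (coercivity of $H_i$ in the interior, the barriers $\psi^\pm$ at the endpoints), uniformly over the uniformly bounded sources $\sum_{j\ne i}M_{ij}^\eta v$. This is doable but is precisely the extra machinery the paper's contraction argument avoids; that is the trade-off between the two routes. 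A second, cosmetic point: ``$\psi^-\le\psi^+$ is clear from the explicit expressions'' deserves a line — it uses the enlargements $\theta+\psi_{L,\delta}(\delta),\,h_i+\psi_{L,\delta}(\delta)\ge C_0$ and $\theta-\psi_{L_\epsilon,\delta_\epsilon}(\delta_\epsilon),\,h_i-\psi_{L_\epsilon,\delta_\epsilon}(\delta_\epsilon)\le -C_0$ made at the end of Lemma~\ref{sous-sur-sol} (or, more simply, follows from Lemma~\ref{comp-censA} applied to the pair $(\psi^-,\psi^+)$).
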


\begin{proof}
Since  $\nu_{ij}^\eta$ are bounded measures, we can set
$$
\lambda_{ij}^\eta (x) := \int_{J_j} \nu_{ij}^\eta(x, \rho(x,z))dz_j \text{ for $x\in \bar E_i$},
$$
from which we can write
\begin{align}\label{deftildeI}
\begin{split}
\I^\eta_{ij} u(x) = & \int_{J_j} u_j(z_j) \nu_{ij}^\eta(x, \rho(x, z))dz_j -  \lambda^\eta_{ij}(x) u_i(x).
\end{split}
\end{align}


We construct the solution of the problem~\eqref{eqaprox} as the uniform limit of a sequence $\{ u^k \}_k$ that we define inductively as follows. After relabeling the set of indices $1 \leq i \leq N$ if necessary, we split it as $\{ 1,...,N', N'+1, ..., N\}$ such that, for all $N' + 1 \leq i \leq N$, $\I_i^\eta =\I_{ii}^\eta$ is censored to $\bar{E}_i$, in other words,  
$$
\lambda^\eta_{ij}(x) = 0, \quad \mbox{for all} \ x \in \bar E_i.
$$

For each $N'+1 \leq i \leq N$, we solve the censored Dirichlet problem
\begin{equation}\label{censoredRennes}
\left \{ \begin{array}{l}
\lambda u - \epsilon u_{x_i x_i} - \I_{ii}^\eta u(x) + H_i(x, u_{x_i}) = 0 \quad \mbox{in} \ J_i, \\
u(0) = \theta, u(a_i) = h_i, \end{array} \right .
\end{equation}
which has a unique viscosity solution $u_i \in C([0,a_i])$ in view of Theorem~\ref{teo-censA}.
For all $k \in \N$ and $N' + 1 \leq i \leq N$, we set $u^k_i := u_i$.

Now we deal with the edges $1 \leq i \leq N'$. We initialize the sequence $\{u^k_i\}_{k, 1\leq i\leq N'}$ with affine functions $u_i^0 \in C(\bar J_i)$ taking values $u_i^0(0) = \theta$, $u_i^0(a_i) = h_i$ for each $1 \leq i \leq N'$. This concludes the definition of $u^0 \in C(\Gamma)$.

For $k \geq 0$ and $u^k$ already defined,  using again  Theorem~\ref{teo-censA},
we construct $u_i^{k + 1} \in C(\bar J_i), 1 \leq i \leq N'$ as the unique solution to the censored Dirichlet problem
\begin{equation}\label{eqiterative1}
\left \{ \begin{array}{l}
\Lambda_i^\eta(x) u - \epsilon u_{x_i x_i} - \I_{ii}^\eta u(x) - \sum \limits_{j \neq i} f^k_{ij}(x) + H_i(x, u_{x_i}) = 0 \quad \mbox{in} \ J_i, \\
u(0) = \theta, u(a_i) = h_i, \end{array} \right .
\end{equation}
where we have denoted
\begin{align*}
& \Lambda_i^\eta(x) = \lambda + \lambda_i^\eta(x) := \lambda + \sum_{j \neq i} \lambda_{i j}^\eta(x), \\
& f^k_{ij}(x) = \int_{J_j} u_j^k(z_j) \nu_{i j}^\eta(x, \rho(x, z))dz_j, \quad j \neq i.
\end{align*}

In the construction of each $u_i^{k+1}$, we only require the information of
the continuous functions $u^k$ the functions $f_{ij}^k$. Notice that $f_{ij}^k$ is continuous by Lebesgue Theorem.

The key step on the proof is the following

\smallskip

\noindent
{\bf Claim.} \textsl{There exists $\lambda^* \in (0,1)$ such that, for all $k$ large enough we have
\begin{align}
\label{bounduk}
& \max_{1\leq i \leq N} \| u^k_i \|_{L^\infty(\bar J_i)} \leq  C_0, \\
\label{Cauchy2}
& \| u_i^{k + 1} - u_i^{k} \|_{L^\infty(\bar J_i)} \leq \lambda^* \| u^{k} - u^{k-1}\|_{L^\infty(\Gamma)}, \quad \mbox{for all} \ 1 \leq i \leq N,
\end{align}
where $C_0$ is the constant in~\eqref{defC0}.
}

\smallskip

We start with~\eqref{bounduk}. For $k = 0$ this is evident, as well as for each $u_i^k$ with $N'+1 \leq i \leq N$
by the comparison principle Lemma~\ref{comp-censA} for~\eqref{censoredRennes}. If the bound is true for $u^k$, then we have that for each $1 \leq i \leq N'$,  $u_i^{k+1}$ is a viscosity subsolution to
$$
\Lambda_{i}^\eta u - \epsilon u_{x_i x_i} - \I_{i i}^\eta u + H_i(x, u_{x_i}) \leq C_0 \lambda_{i}^\eta(x) \quad \mbox{in} \ J_i,
$$
with its respective boundary conditions. From here, recalling that $\Lambda_i^\eta = \lambda + \lambda_i^\eta$, it is easy to see that the constant function equal to $C_0$ in $J_i$ is a supersolution for the problem solved by $u_i^{k + 1}$, from which $u^{k + 1}_i \leq C_0$. The lower bound can be found in the same way, and~\eqref{bounduk} follows. Notice that  the estimate in~\eqref{bounduk} does not depend neither on $k, \epsilon$ nor on $\eta$.

Next, we prove~\eqref{Cauchy2}. Notice that this inequality trivially holds for $N'+1 \leq i \leq N$ since in that case we have $u_i^{k + 1} = u_i^k$ for all $k \geq 0$. Thus, we concentrate on the case $1 \leq i \leq N'$.

For each $k$, denote $w_i^k = u_i^{k + 1} - u_i^k \in C(\bar J_i)$. Under the Lipschitz assumption on $H$ in~\eqref{H}, $w^k_i$ solves the problem
\begin{align*}
\left \{ \begin{array}{l} \Lambda^\eta_i w^{k} - \epsilon w^k_{x_i x_i} - \I_{i i}^\eta w^k - C_H |w^k_{x_i}| \leq \lambda_{i}^\eta \| w^{k - 1} \|_{L^\infty(\Gamma)} , \quad \mbox{in} \ J_i, \\
 w^{k}(0) = w^k(a_i) = 0, \end{array} \right .
\end{align*}
see~\cite[Lemma 5.3]{barles13} or~\cite[Lemma 3.7]{bbp97}.
Denote 
$$
\lambda^* = \max_{1 \leq i \leq N'} \max_{x \in \bar J_i} \frac{ \lambda_i^\eta(x)}{\Lambda_i^\eta(x)}  \in (0, 1).
$$
It is easy to see that the constant function $x \mapsto \lambda^* \| w^{k - 1}\|_{L^\infty(\Gamma)}$
is a supersolution for the problem solved by $w^k_i$, from which by comparison 
we get
$$
w^{k}_i \leq \lambda^* \| w^{k - 1}\|_{L^\infty(\Gamma)} \quad \mbox{in} \ \bar J_i,
$$
and since a lower bound can be established in the same way, we arrive at
$$
\| w^k_i \|_{L^\infty(\bar J_i)} \leq \lambda^* \| w^{k - 1}\|_{L^\infty(\Gamma)},
$$
which concludes~\eqref{Cauchy2}.

Thus, we can pass to the limit as $k \to +\infty$ in~\eqref{eqiterative1}, and by stability properties of viscosity solutions we arrive at a solution to~\eqref{eqaprox}.
\end{proof}

We now deduce the well-posedness of~\eqref{eqaprox0} from the above lemma and give properties of the solution. 

\begin{prop}\label{prop1} Assume~\eqref{steady}.
For each $\theta \in \R$, there exists a unique viscosity solution $u^\epsilon \in C(\Gamma)$ to problem~\eqref{eqaprox0}. For each $1\leq i \leq N$, we have

\begin{itemize}
\item[(i)]
There exist $L,\delta > 0$, not depending on $\epsilon$,
such that
\begin{eqnarray}
&&  u_i^\epsilon(x) \leq \theta + L x, \quad x \in [0, \delta], \label{barriereOabov}\\
&&  u_i^\epsilon(x) \leq h_i + L (a_i-x), \quad x \in [a_i -\delta, a_i].\label{barriereaiabov}
\end{eqnarray}

\item[(ii)]
There exist $L_\epsilon,\delta_\epsilon > 0$
such that
\begin{eqnarray}
&&  \theta - L_\epsilon x \leq u_i^\epsilon(x), \quad x \in [0, \delta_\epsilon], \label{barriereObel}\\
&&  h_i - L_\epsilon (a_i-x) \leq u_i^\epsilon(x), \quad x \in [a_i -\delta_\epsilon, a_i].\label{barriereaibel}
\end{eqnarray}

\item[(iii)] For every $r >0$, there exists $C_r >0$, independent of $\epsilon$, such that
\begin{eqnarray} \label{inLip} 
&& |u_i^\epsilon(x) - u_i^\epsilon(y)| \leq C_r |x - y|, \quad x, y \in [r, a_i-r]. 
\end{eqnarray}
If $C_\epsilon=\max \{L,L_\epsilon,C_{\delta \wedge \delta_\epsilon}\}$, then
\begin{eqnarray}  \label{uepslip}
&& |u_i^\epsilon(x) - u_i^\epsilon(y)| \leq C_\epsilon |x - y|, \quad x, y \in \bar J_i. 
\end{eqnarray}

\item[(iv)] $u^\epsilon \in C^{2, 1-\sigma}(\Gamma)$.

\item[(v)] The map $\theta\in\R\mapsto u^\epsilon \in C^{2, \alpha}(\Gamma)$
is continuous for all $\alpha <1-\sigma$.
  
\end{itemize}
\end{prop}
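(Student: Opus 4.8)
The overall strategy is to combine the existence theory for the regularized problem~\eqref{eqaprox} (Lemma~\ref{lematildeueps}) with a passage to the limit $\eta \to 0$, and then to produce the one-sided barrier estimates (i)--(ii) from Lemma~\ref{sous-sur-sol} together with the comparison principle for the Dirichlet problem on $\Gamma$ (Theorem~\ref{teo-censA} / Lemma~\ref{comp-censA}), which also gives uniqueness. First I would fix $\theta$, take the solutions $u^{\epsilon,\eta}$ of~\eqref{eqaprox} from Lemma~\ref{lematildeueps}, and use the uniform bound $\|u^{\epsilon,\eta}\|_{L^\infty(\Gamma)} \le C_0$. Since the kernels $\nu^\eta_{ij}$ all satisfy~\eqref{hyp-nu} with the same constants $\Lambda,\sigma$, the interior regularity theory for~\eqref{eqaprox} (elliptic, because $\epsilon > 0$) yields $C^{2,1-\sigma}$-bounds on compact subsets of each edge, uniform in $\eta$; together with the $\eta$-independent barriers from Lemma~\ref{sous-sur-sol} one gets equicontinuity up to the boundary. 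By Arzel\`a--Ascoli and stability of viscosity solutions (plus $\I^\eta u \to \I u$ uniformly for H\"older $u$), a subsequence converges to a viscosity solution $u^\epsilon \in C(\Gamma)$ of~\eqref{eqaprox0}. Uniqueness follows from Theorem~\ref{teo-censA}.

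Next I would establish (i) and (ii). The point is that $\psi^+$ and $\psi^-$ from Lemma~\ref{sous-sur-sol} satisfy the boundary conditions of~\eqref{eqaprox0} \emph{pointwise} and are respectively super- and subsolutions; by the comparison principle $\psi^- \le u^\epsilon \le \psi^+$ on $\Gamma$. Reading off the explicit form of $\psi^\pm$ near $O$ and near $\ver_i$ gives exactly~\eqref{barriereOabov}--\eqref{barriereaibel}, with $(L,\delta)$ independent of $\epsilon$ on the supersolution side and $(L_\epsilon,\delta_\epsilon)$ depending on $\epsilon$ on the subsolution side. For (iii), the interior Lipschitz bound~\eqref{inLip} comes from the standard argument for coercive Hamilton--Jacobi equations: using assumption~\eqref{H}(iii), the coercivity of $H_i$ forces a gradient bound on $u^\epsilon$ on $[r,a_i-r]$ that is independent of $\epsilon$ (here one uses that $\I_i u^\epsilon$ is bounded on such compact sets, via Lemma~\ref{regI} applied to the already-bounded $u^\epsilon$, or directly from the $L^\infty$ bound and~\eqref{Levy1}); see~\cite{barles13}. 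Combining this interior Lipschitz estimate on $[\delta\wedge\delta_\epsilon, a_i - \delta\wedge\delta_\epsilon]$ with the barrier slopes $L$ and $L_\epsilon$ near the two endpoints yields the global Lipschitz bound~\eqref{uepslip} with constant $C_\epsilon = \max\{L,L_\epsilon,C_{\delta\wedge\delta_\epsilon}\}$.

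For (iv), once $u^\epsilon$ is known to be Lipschitz on $\Gamma$, Lemma~\ref{regI} gives $\I_i u^\epsilon \in C^{0,1-\sigma}(\bar E_i)$ (taking $\gamma = 1$); then each $u_i^\epsilon$ solves a uniformly elliptic ODE $\lambda u - \epsilon u'' + H_i(x,u') = \I_i u^\epsilon(x) =: g_i(x)$ with $g_i \in C^{0,1-\sigma}$ and $H_i$ Lipschitz, so Schauder-type / classical ODE regularity upgrades $u_i^\epsilon$ to $C^{2,1-\sigma}([0,a_i])$, which is the statement $u^\epsilon \in C^{2,1-\sigma}(\Gamma)$. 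Finally, (v) is a continuous-dependence statement: for two values $\theta_1,\theta_2$ the difference $u^{\epsilon}(\cdot;\theta_1) - u^{\epsilon}(\cdot;\theta_2)$ is controlled in $L^\infty$ by $|\theta_1-\theta_2|$ via the comparison principle (comparing $u^\epsilon(\cdot;\theta_1)$ with $u^\epsilon(\cdot;\theta_2) \pm |\theta_1-\theta_2|$, the extra constant being absorbed since $\lambda > 0$ and, on the edges censored to themselves, using $\Lambda_i^\eta$-type bounds); then the interior $C^{2,\alpha}$ bounds depend continuously on this $L^\infty$ data and on the H\"older norm of $\I_i u^\epsilon$, and interpolation gives convergence in $C^{2,\alpha}(\Gamma)$ for every $\alpha < 1-\sigma$.

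\smallskip

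\noindent\textbf{Main obstacle.} The delicate point is the \emph{uniform-in-$\eta$} (and then uniform-in-$\epsilon$ where claimed) regularity needed to pass to the limit and to get the barriers to do their job: one must be careful that the nonlocal term $\I_i^\eta u^{\epsilon,\eta}$, which couples all edges, does not destroy the $\eta$-independence of the estimates. This is exactly why~\eqref{hyp-nu} is imposed with $\sigma < 1$ and with $\eta$-uniform constants, and why Lemma~\ref{sous-sur-sol} was engineered to have $(L,\delta)$ not depending on $\epsilon$; the nonlocal estimate~\eqref{Ipsi-full} is the technical heart that makes the barrier construction survive the presence of $\I_i$. The $\epsilon$-dependence of the lower barrier (hence of $C_\epsilon$) is genuine and unavoidable at this stage — removing it is precisely the content of the later uniform Lipschitz estimate near $O$ that uses the Kirchhoff condition, not available here.
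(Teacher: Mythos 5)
Your overall architecture (solve \eqref{eqaprox}, pass to the limit $\eta\to 0$ by Ascoli and stability, barriers from Lemma~\ref{sous-sur-sol} plus the comparison principle for (i)--(ii), coercivity for the Lipschitz bounds, elliptic bootstrap for (iv)) coincides with the paper's. Your treatment of (v), comparing $u^\epsilon(\cdot;\theta_1)$ with $u^\epsilon(\cdot;\theta_2)\pm|\theta_1-\theta_2|$ to get $L^\infty$-Lipschitz dependence on $\theta$ and then interpolating against the uniform $C^{2,1-\sigma}$ bound, is a legitimate and in fact quantitative alternative to the paper's compactness-plus-uniqueness argument; both are fine.

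The one step that fails as written is your justification of the interior Lipschitz estimate in (iii). You propose to bound $\I_i u^\epsilon$ on compact subsets ``directly from the $L^\infty$ bound and \eqref{Levy1}'', or alternatively via Lemma~\ref{regI}. Neither works at this stage: the kernel has a nonintegrable singularity of order $1+\sigma$, so for a merely bounded function the integral $\int [u(z)-u(x)]\nu_{ij}(x,\rho(x,z))\,dz$ need not converge --- condition \eqref{Levy1} only controls $\int\min\{r^\gamma,1\}\nu\,dr$ for $\gamma>\sigma$, i.e.\ it presupposes a modulus of continuity strictly better than $\sigma$ near the singularity, and Lemma~\ref{regI} likewise assumes $u\in C^{0,\gamma}(\Gamma)$ with $\gamma>\sigma$, which is precisely the regularity you are trying to establish. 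The correct mechanism (the one actually used in the paper's Step~1 and implicit in the references you cite) is to keep the viscosity splitting: at a maximum point $\bar x$ of $u^{\epsilon,\eta}-K\rho(\cdot,x_0)$ one estimates $\I_i[B_\varrho^c(\bar x)]u^{\epsilon,\eta}$ using only $\mathrm{osc}_\Gamma\{u^{\epsilon,\eta}\}$, which yields a term of order $\varrho^{-\sigma}$, and $\I_i[B_\varrho(\bar x)]\varphi$ using the Lipschitz constant $K$ of the test function, which yields a term of order $K\varrho^{1-\sigma}/(1-\sigma)$; the latter must then be absorbed into the coercivity term $C_H^{-1}K$ by first fixing $\varrho$ small. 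This absorption is exactly where $\sigma<1$ enters, and it produces a threshold $K_0$ depending on $\mathrm{osc}_\Gamma\{u^{\epsilon,\eta}\}$ but not on $\epsilon$ or $\eta$. For the same reason, the ``uniform interior $C^{2,1-\sigma}$ bounds'' you invoke in the existence paragraph cannot precede the Lipschitz estimate: the regularity has to be obtained in the order Lipschitz $\Rightarrow$ $\I_i u^{\epsilon,\eta}\in C^{0,1-\sigma}$ $\Rightarrow$ $C^{2,1-\sigma}$, as you in fact do later in (iv).
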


\begin{proof}
Let $u^{\epsilon,\eta}$ be the solution found in Lemma~\ref{lematildeueps}. In what follows, we are going to prove that this solution satisfies the estimates given in the statement of this proposition, and that these estimates are independent of $\eta$. Thus, the results for the problem~\eqref{eqaprox0} follow by Ascoli Theorem and stability, sending $\eta \to 0$.
The uniqueness of $u^{\epsilon}$ comes from Lemma~\ref{comp-censA}.
\medskip

\noindent
(i) and (ii).
The proofs of the barriers are a direct application
of the comparison principle Lemma~\ref{comp-censA}, which yields
$\psi^-\leq u^{\epsilon,\eta}\leq \psi^+$ in $\Gamma$, where $\psi^\pm$ are the sub and supersolution
constructed in Lemma~\ref{sous-sur-sol}.
Notice that~\eqref{barriereObel} and~\eqref{barriereaibel} depend on
the ellipticity constant $\epsilon$ since $\psi^-$ depend on $\epsilon$ in Lemma~\ref{sous-sur-sol}.
\medskip

\noindent
(iii) {\it Lipschitz estimates.}
\smallskip

\noindent{\it Step 1. Claim: There exists $K_0 = K_0(N, \lambda, \Lambda, \sigma, C_H, C_0, \mathrm{osc}_\Gamma \{ u^{\epsilon, \eta}\})$ such that, for any $x_0\in\Gamma$ and $\varphi(x)=K\rho(x,x_0)$ with $K>K_0$,
\begin{eqnarray}\label{sup683}
&& \sup_{x\in \Gamma} \{ u^{\epsilon, \eta} (x) - u^{\epsilon, \eta} (x_0)- \varphi(x) \}
\end{eqnarray}
is achieved either at $\bar{x}=x_0$, or at
a vertex $\bar{x} \in \VV$.}

Let us underline that this claim relies on the coercivity of the Hamiltonian and does not depend neither on the ellipticity constant $\epsilon$ nor on $\eta$ since $u^{\epsilon, \eta}$ and $\mathrm{osc}_\Gamma \{ u^{\epsilon, \eta}\}$ are bounded independently of $\epsilon, \eta$.

We prove the claim, assuming without loss of generality, that $\bar x\in E_i$ and $\bar x\neq x_0$. It follows that we can use $\varphi$ as a test-function for the subsolution $u^\epsilon$ of~\eqref{eqaprox0} at $\bar x$. For 
any $\varrho > 0$ we have
\begin{eqnarray}\label{viscGeps}
G_i^\varrho(u^{\epsilon, \eta}, \varphi, \varphi_{x_i}(\bar x), \bar x) 
\leq 0.  
\end{eqnarray}


We estimate the different terms: we have 
\begin{align*}
& u^{\epsilon, \eta} (\bar{x})\geq -C_0, \\
& H(\bar x, \varphi_{x_i}(\bar x))\geq C_H^{-1}|\varphi_{x_i}(\bar x)| -C_H= C_H^{-1}K -C_H, \ \mbox{and} \\ 
& \varphi_{x_i x_i}(\bar{x})=0.
\end{align*}
On the other hand, we have
\begin{eqnarray*}
|\I_i[B_\varrho^c(\bar x)] u^\epsilon (\bar x )|
&\leq&
\mathrm{osc}_\Gamma \{ u^{\epsilon, \eta}\} \sum_{j=1}^N \int_{J_j \cap \{ \rho(\gamma_j(z), \bar x) \geq \varrho\}} \nu_{ij}^\eta(\bar x, \rho(\gamma_j(z),\bar x)) dz\\
&\leq&
\Lambda \mathrm{osc}_\Gamma \{ u^{\epsilon, \eta}\} \sum_{j=1}^N \int_{J_j \cap \{ \rho(\gamma_j(z), \bar x) \geq \varrho\}} \rho(\gamma_j(z), \bar x)^{-1-\sigma}dz \\
&\leq& 2 \mathrm{osc}_\Gamma \{ u^{\epsilon, \eta}\} N\Lambda \sigma^{-1} \varrho^{-\sigma},
\end{eqnarray*}
and using that $\varphi$ is $K$-Lipschitz, we have
\begin{eqnarray*}
|\I_{i}[B_\varrho(\bar x)] \varphi(\bar x)|
&\leq&
\sum_{j=1}^N \int_{J_j \cap \{ \rho(\gamma_j(z), \bar x) < \varrho \}} |\varphi_j(z)  - \varphi(\bar x)| \nu_{ij}^\eta(\bar x, \rho(\gamma_j(z),\bar x)) dz\\
&\leq & K \Lambda \sum_{j=1}^N \int_{J_j \cap \{ \rho(\gamma_j(z), \bar x) < \varrho \}} \rho(\gamma_j(z), \bar x)^{-\sigma}dz \\
&\leq& \frac{NK\Lambda \varrho^{1- \sigma}}{1- \sigma}.
\end{eqnarray*}

Plugging these estimates into the viscosity inequality~\eqref{viscGeps}, we arrive at
\begin{eqnarray*}
  C_H^{-1}K \leq  2\mathrm{osc}_\Gamma \{ u^{\epsilon, \eta}\}  N\Lambda \sigma^{-1} \varrho^{-\sigma} + \frac{NK\Lambda \varrho^{1- \sigma}}{1- \sigma} + C_H + \lambda C_0.
\end{eqnarray*}
At this point, we fix $\varrho$ small enough in order that $\frac{N\Lambda \varrho^{1- \sigma}}{1- \sigma} \leq \frac{C_H^{-1}}{2}$, from which we get
$$
\frac{C_H^{-1}K}{2} \leq  2\mathrm{osc}_\Gamma \{ u^{\epsilon, \eta} \} N\Lambda \sigma^{-1} \varrho^{-\sigma} + C_H + \lambda C_0. 
$$
Therefore, choosing $K\geq K_0$, with
$$
K_0 := 2C_H(2 \mathrm{osc}_\Gamma \{ u^{\epsilon, \eta}\} N\Lambda \sigma^{-1} \varrho^{-\sigma} + C_H + \lambda C_0) + 1,
$$
we reach a contradiction with~\eqref{viscGeps}.

\medskip

\noindent{\it Step 2. Interior Lipschitz estimates.}
Let  $1\leq i \leq N$, $r > 0$ and $x_0 \in E_i$ such that $(x_0)_i \in [r, a_i - r]$. We consider
\begin{equation}\label{choixKdelta}
K > \max \{\frac{4C_0}{r} , K_0\},
\end{equation} 
with $K_0$ as in Step 1.

The supremum in~\eqref{sup683} is nonnegative and it is achieved at some $\bar x \in \Gamma$
such that
\begin{eqnarray}\label{estim-pt-max}
&&  K \rho(\bar{x}, x_0) \leq  u^{\epsilon, \eta} (\bar{x}) - u^{\epsilon, \eta} (x_0) \leq 2 C_0,
\end{eqnarray}
from which, by the choice of $K$, we conclude that $\bar x_i \in J_i \cap [r/2, a_i - r/2]$.
Thanks to the claim, we get $\bar{x}=x_0$
and therefore the supremum is 0. Since $x_0$ is arbitrary in the set of points at distance at least $r$ to the vertices, we conclude the interior Lipschitz estimates~\eqref{inLip}.

\medskip

\noindent{\it Step 3. Lipschitz estimates near the vertices and global Lipschitz estimates.} These estimates rely on the barriers and will concentrate on $O$, the estimates near other vertices being similar.

Consider the constants $L,\delta, L_\epsilon, \delta_\epsilon$ appearing in~\eqref{barriereOabov}-\eqref{barriereaiabov}-\eqref{barriereObel}-\eqref{barriereaibel}
and now choose 
$$
K> \max \{L, L_\epsilon, \frac{4C_0}{\delta_\epsilon}, K_0\} 
$$
and $x_0\in\Gamma \cap B_{\delta_\epsilon/2}(O)$
in~\eqref{sup683}.

From~\eqref{estim-pt-max} and the choice of $K$, we obtain that the maximum in~\eqref{sup683} is achieved at $\bar{x}\in B_{\delta_\epsilon}(O)$. 
From the claim in Step 1, we have that either $\bar{x}=x_0$ and in this case we are done since the maximum is 0, or $\bar{x}=O$. In this latter case,
using the barriers~\eqref{barriereOabov} and~\eqref{barriereObel}, it follows
that 
\begin{eqnarray*}
&& u^{\epsilon, \eta}(O) - u^{\epsilon, \eta}(x_0) - K \rho(O, x_0)
\leq 
\left(\max \{L ,L_\epsilon\} -K\right) \rho(O, x_0)\leq 0,
\end{eqnarray*}
and the maximum is also 0.
This proves the Lipschitz estimates near the vertices.

Putting the previous results together,
we conclude for the global Lipschitz bound~\eqref{uepslip}
(depending on $\epsilon$ since the barriers (ii) depend
on $\epsilon$, but not depending on $\eta$ since $\nu_{ij}^\eta$ satisfies~\eqref{hyp-nu} uniformly wrt $\eta$).

Let us mention that the proofs of the Lipschitz estimates work readily when $u$ is a $USC$ viscosity subsolution
of~\eqref{eqaprox0}.
In fact, Steps~1 and~2 do not rely on the ellipticity constant $\epsilon$ in~\eqref{eqaprox0},
so we can take $\epsilon=0$ in~\eqref{eqaprox0}, or consider~\eqref{eq}-\eqref{Kirchhoff}-\eqref{dirichlet-bc}, see Lemma~\ref{sous-sol-lip} below.
Step~3 needs the whole set of 
barriers~\eqref{barriereOabov}-\eqref{barriereaiabov}-\eqref{barriereObel}-\eqref{barriereaibel}, and the last two may be lost when the ellipticity vanishes.

\medskip

\noindent
(iv) {\it $C^{2,1-\sigma}$ regularity.} The function $u_i^{\epsilon,\eta}$ is a continuous viscosity solution of
\begin{eqnarray}\label{eq-ui}
&&- \epsilon u_{x_i x_i}^{\epsilon,\eta} = f := -\lambda u_i^{\epsilon,\eta} + \I_i^\eta u_i^{\epsilon,\eta} - H_i(x, u^{\epsilon,\eta}_{x_i}) \text{ in $(0,a_i)$.}
\end{eqnarray}
Since $u_i^{\epsilon,\eta}$ is $C_\epsilon$-Lipschitz continuous on $[0,a_i]$, 
thanks to Lemma~\ref{regI}, we obtain that $f\in L^\infty ([0,a_i])$
and the $L^\infty$ bound depends on $C_0$ and the Lipschitz
constant $C_\epsilon$ of $u_i^{\epsilon,\eta}$.
Therefore, by Han-Lin~\cite[Theorem 5.22]{hl97}, we obtain that
$u_i^{\epsilon,\eta}\in W_{\rm loc}^{2,p}([0,a_i])$ for all $1<p<\infty$. 
By Sobolev inequalities (see e.g., Evans~\cite[Theorem 6, p.270]{evans98}),
it follows that $u_i^{\epsilon,\eta}\in C^{1,\gamma}([0,a_i])$ for all $\gamma <1$.
It means that one can upgrade the regularity of $f$ in~\eqref{eq-ui}:
from Lemma~\ref{regI} once again and
since $H_i$ is Lipschitz continuous,
we get that $f\in C^{0,1-\sigma}([0,a_i])$. Thus
$u_{x_i x_i}^{\epsilon,\eta}\in C^{0,1-\sigma}([0,a_i])$ and
we conclude $u_{i}^{\epsilon,\eta}\in C^{2,1-\sigma}([0,a_i])$
as desired.
Note that $|| u_i^{\epsilon,\eta}||_{C^{2,1-\sigma}(\bar J_i)}$ depends on the $L^\infty$ bound~\eqref{bound_uieps}
and the Lipschitz
constant $C_\epsilon$ of $u_i^{\epsilon,\eta}$ (see~\eqref{uepslip}).

All the above estimates are independent of $\eta$, from which we can pass to the limit as $\eta \to 0$ in problem~\eqref{eqaprox} and conclude the existence of a solution $u^\epsilon$ to~\eqref{eqaprox0} by stability. Reproducing the arguments above, we obtain that $u_i^\epsilon\in C^{2,1-\sigma}([0,a_i])$.
\medskip


\noindent
(v) {\it Continuous dependence with respect to $\theta$.}
Let $\theta_n \to \theta$ and denote $u_i^{\epsilon, n}$ the associated solution of~\eqref{eqaprox0}
in $\bar J_i$. By~(ii), the family $\{u_i^{\epsilon, n}\}_n$ is uniformly bounded as $n \to \infty$. Since $u_i^{\epsilon,n}\in C^{2,1-\sigma}(\bar J_i)$ with a
$C^{2,1-\sigma}$ norm depending only on $|\theta_n|$ through the $L^\infty$ bound
and the Lipschitz constant of $u_i^\epsilon$, it follows that
$||u^{\epsilon,n}||_{C^{2,1-\sigma}(\bar J_i)}$ is bounded with respect to $n$.
By Ascoli Theorem, we can extract a subsequence still denoted by $\{u_i^{\epsilon, n}\}$,
such that $u_i^{\epsilon, n} \to \bar u_i^\epsilon$ in $C^{2, \alpha}(\bar J_i)$ for $0 < \alpha < 1-\sigma$,
and such a function $ \bar u_i^\epsilon$ must satisfy~\eqref{eqaprox0} with pointwise Dirichlet condition
thanks to (i)-(ii).
By uniqueness of the solution of~\eqref{eqaprox0}, 
all the subsequences converge to the same limit from which we infer that
the whole sequence $u_i^{\epsilon, n}$ converges to  the solution of~\eqref{eqaprox}
associated with $\theta$. This proves the result.
\end{proof}

The following is the existence result for the original Kirchhoff-type equation. 

\begin{teo}\label{teoexistence} Under assumption~\eqref{steady},
there exists a viscosity solution $u \in C(\Gamma)$ for the Kirchhoff problem~\eqref{eq}-\eqref{dirichlet-bc}-\eqref{Kirchhoff}.
Moreover, $u\in C^{0,\gamma}(\Gamma)$ for any $\gamma\in (0,1)$, and is locally Lipschitz continuous in $\Gamma \setminus \{ \ver_i \}_{1 \leq i \leq N}$. 
\end{teo}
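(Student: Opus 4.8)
The plan is to prove this by vanishing viscosity, using the $\theta$-family $u^{\epsilon,\theta}$ of solutions of the viscous Dirichlet problem~\eqref{eqaprox0} provided by Proposition~\ref{prop1}. Since $u^{\epsilon,\theta}\in C^{2,1-\sigma}(\Gamma)$, the derivatives $u^{\epsilon,\theta}_{x_i}(O)$ exist classically, so one sets
\[ \Phi_\epsilon(\theta) := \sum_{i=1}^N -u^{\epsilon,\theta}_{x_i}(O), \]
which by Proposition~\ref{prop1}(v) is continuous in $\theta$. Finding a solution of the viscous Kirchhoff problem~\eqref{eqvanish}-\eqref{Kirchhoff}-\eqref{dirichlet-bc} then amounts to finding $\theta_\epsilon$ with $\Phi_\epsilon(\theta_\epsilon)=B$, and the genuine solution of~\eqref{eq}-\eqref{dirichlet-bc}-\eqref{Kirchhoff} is recovered as $\epsilon\to 0$.

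First I would run the Intermediate Value Theorem on $\Phi_\epsilon$, for which the point is to show $\Phi_\epsilon(\theta)\to+\infty$ as $\theta\to+\infty$ and $\Phi_\epsilon(\theta)\to-\infty$ as $\theta\to-\infty$. This I would get from exponential barriers: for $\theta$ large, $\phi^+_i(x)=\theta-K(1-e^{-x})$ with $K=K(\theta)$ of order $\lambda\theta$ is a supersolution of~\eqref{eqaprox0} — the nonlocal contribution of $\phi^+$ being only $O(K)$ since $\sigma<1$, hence dominated by $\lambda\phi^+\gtrsim\lambda\theta$ and the coercive $H_i$ — so comparison (Lemma~\ref{comp-censA}) gives $u^{\epsilon,\theta}\le\phi^+$; since both equal $\theta$ at $O$, $u^{\epsilon,\theta}_{x_i}(O)\le(\phi^+_i)'(0)=-K$, whence $\Phi_\epsilon(\theta)\ge NK\to+\infty$. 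Symmetrically $\phi^-_i(x)=\theta+K(1-e^{-x})$ with $K\sim\lambda|\theta|$ is a subsolution for $\theta$ very negative, yielding $\Phi_\epsilon(\theta)\le-NK\to-\infty$. By continuity there is $\theta_\epsilon$ with $\Phi_\epsilon(\theta_\epsilon)=B$ (and, via the maximum principle applied to the difference of two such solutions, $\Phi_\epsilon$ is nondecreasing, so $\theta_\epsilon$ is unique). Crucially, $K(\theta)\sim\lambda\theta$ with constants independent of $\epsilon$, so the bounds $\Phi_\epsilon(\theta)\ge NK(\theta)$ and $\Phi_\epsilon(\theta)\le-NK(\theta)$ are uniform in $\epsilon$, forcing $|\theta_\epsilon|\le C$ uniformly. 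Consequently, by Proposition~\ref{prop1}, $u^\epsilon:=u^{\epsilon,\theta_\epsilon}$ satisfies $\|u^\epsilon\|_{L^\infty(\Gamma)}\le C_0$ and the upper barriers $u^\epsilon_i(x)\le\theta_\epsilon+Lx$ near $O$, $u^\epsilon_i(x)\le h_i+L(a_i-x)$ near $\ver_i$, all with $L,\delta$ uniform in $\epsilon$.

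Next come the uniform estimates that survive the limit. The interior Lipschitz bound of Proposition~\ref{prop1}(iii) is already uniform; the missing piece — and the heart of the proof — is a uniform Lipschitz bound near $O$, the lower barrier~\eqref{barriereObel} there depending badly on $\epsilon$. Here the Kirchhoff condition enters: $u^\epsilon$ satisfies~\eqref{Kirchhoff} classically and $u^\epsilon_{x_j}(O)\le L$ for every $j$, so $-u^\epsilon_{x_i}(O)=B+\sum_{j\ne i}u^\epsilon_{x_j}(O)\le B+(N-1)L$, giving $|u^\epsilon_{x_i}(O)|\le\max\{L,\,B+(N-1)L\}$ uniformly. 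From this pointwise gradient bound at $O$ together with the coercivity of the $H_i$ one propagates to a uniform Lipschitz bound on a fixed ball $B_{\delta_0}(O)$, either by re-running the barrier argument of Proposition~\ref{prop1}, Step~3, with this uniform control at $O$ in place of the $\epsilon$-dependent barrier, or by a comparison argument on $B_{\delta_0}(O)$. Combined with a uniform modulus of continuity near $\ver_i$ (from the uniform upper barrier and the Hölder theory for the nonlocal Dirichlet problem), $\{u^\epsilon\}_\epsilon$ is uniformly bounded, uniformly Lipschitz on compact subsets of $\Gamma\setminus\{\ver_i\}$, and uniformly $C^{0,\gamma}(\Gamma)$ for every $\gamma<1$.

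Finally, each $u^\epsilon$ is a viscosity solution of the viscous Kirchhoff problem~\eqref{eqvanish}-\eqref{Kirchhoff}-\eqref{dirichlet-bc} (the junction inequalities holding in both directions precisely because $u^\epsilon$ satisfies~\eqref{Kirchhoff} exactly). By Ascoli a subsequence converges in $C(\Gamma)$ to some $u$ inheriting the regularity above; since the viscous term $\epsilon u^\epsilon_{x_ix_i}$ drops and the relevant nonlocal operators are continuous on the Hölder classes at hand, stability of junction viscosity solutions gives that $u$ solves~\eqref{eq}-\eqref{dirichlet-bc}-\eqref{Kirchhoff}, with the Kirchhoff inequality at $O$ preserved thanks to the uniform Lipschitz control near $O$. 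I expect the main obstacle to be exactly that uniform Lipschitz estimate near $O$ — bypassing the non-degeneracy of the viscous equation and the $\epsilon$-blow-up of the lower barrier there, using only the one-sided (uniform) upper barrier, the Kirchhoff relation, and the leading effect of the coercive Hamiltonian — with a secondary delicate point being the uniform bound on $\theta_\epsilon$, which hinges on the IVT barriers having $\epsilon$-independent slopes at $O$.
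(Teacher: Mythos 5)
Your overall strategy coincides with the paper's: vanishing viscosity, the Intermediate Value Theorem in $\theta$ with barriers whose slopes at $O$ are independent of $\epsilon$ (your exponential barrier $\theta-K(1-e^{-x})$ with $K\sim c\lambda\theta$ plays the role of the paper's affine supersolution $U_i(x)=\theta+p_ix$), uniform boundedness of $\theta_\epsilon$, a uniform Lipschitz estimate near $O$ extracted from the Kirchhoff condition and the coercivity of $H$, and stability in the limit. You also correctly identify the crux, namely the uniform Lipschitz bound near $O$ in the absence of a uniform lower barrier.

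There is, however, a genuine gap precisely at that crux. From $u^\epsilon_{x_j}(O)\le L$ and the Kirchhoff identity you correctly deduce the uniform bound $|u^\epsilon_{x_i}(O)|\le \max\{L,\,|B|+(N-1)L\}$, but a derivative bound \emph{at the single point} $O$ does not propagate to a Lipschitz bound on $B_{\delta_0}(O)$: the Taylor remainder involves $u^\epsilon_{x_ix_i}$, which is not controlled uniformly in $\epsilon$, so nothing prevents $u^\epsilon$ from dipping steeply just after $O$. In particular, ``re-running the barrier argument of Proposition~\ref{prop1}, Step~3, with this uniform control at $O$'' does not work as stated: that step needs the two-sided inequality $u^\epsilon(x_0)\ge u^\epsilon(O)-K\rho(x_0,O)$ for \emph{all} $x_0$ near $O$ (i.e., the lower barrier \eqref{barriereObel}), which is exactly what is unavailable uniformly in $\epsilon$. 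The paper closes this by a different mechanism: for $x_0$ near $O$ it studies $\sup_x\{u^\epsilon(x)-u^\epsilon(x_0)-\varphi^i(x)\}$ with the asymmetric test function $\varphi^i$ (slope $K$ on the edge of $x_0$, slope $L$ on the other edges); coercivity rules out interior maxima, and if the maximum were at $\bar x=O$ the Kirchhoff viscosity inequality applied to $\varphi^i$ would force $K-(N-1)L\le B$, which fails once $K>(N-1)L+B$. In other words, the Kirchhoff control must be applied to the slopes of the test function at a maximum point of $u^\epsilon-\varphi^i$, not to the derivatives of $u^\epsilon$ at $O$; your inequality $-u^\epsilon_{x_i}(O)\le B+(N-1)L$ is the shadow of the right computation but is used on the wrong object. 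A secondary, smaller point: at the exterior vertices the lower barrier is also lost, so the supersolution Dirichlet condition at $\ver_i$ only survives in the generalized (viscosity) sense and requires a dedicated passage to the limit with minimum points converging to $a_i$ (including the nonlocal term); ``stability gives it'' is too quick there, and the uniform H\"older bound near $\ver_i$ comes from the coercivity of $H$, not from the upper barrier.
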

  
\begin{proof}
We divide the proof in several steps.
\smallskip

\noindent {\it Step 1. Getting the Kirchhoff condition.}
For $\theta \in \R$, denote by $u^{\epsilon, \theta} \in C^{2,1-\sigma}(\Gamma)$ the unique solution of~\eqref{eqaprox0}
associated with $\theta$ given by Proposition~\ref{prop1}.
At this point we follow the arguments of Ohavi~\cite{ohavi21}.
Let
\begin{eqnarray*}
  && F(\theta)= \sum_{i=1}^N -u_{x_i}^{\epsilon, \theta}(O) -B.
\end{eqnarray*}
and $p = (p_i)_{1 \leq i \leq N}$ be any vector in $\R^N$ such that
\begin{equation*}
\sum_{i=1}^N -p_i = B.
\end{equation*}

Define $U \in C^2(\Gamma)$ by $U_i(x) := \theta + p_i x_i$ for $x \in \bar E_i$, $1\leq i\leq N$.
We have
\begin{eqnarray}\label{GiU}
&& G_i(U,U_{x_i}(x),x)= \lambda (\theta +p_ix)- \I_i U(x) + H_i(x,p_i),
\end{eqnarray}
with 
\begin{align*}
|\I_i U(x)| = & \Big| \int_{J_i} p_i (z - x_i) \nu_{ii}(x, \rho(x,\gamma_i(z)))dz \\
& + \sum_{j \neq i} \int_{J_j} (p_j z - p_i x_i) \nu_{ij}(x, \rho(x, \gamma_j(z)))dz  \Big|  \\
\leq & \Lambda |p| \Big{(} \int_{J_i} |z - x_i|^{-\sigma} dz + \sum_{j \neq i} \int_{J_j} (x_i + z)^{-\sigma} dz \Big{)}\\
\leq & \frac{(2\bar{a})^{1-\sigma} N \Lambda |p|}{1-\sigma},
\end{align*}
and $|H_i(x,p_i)|\leq C_H(1+|p_i|)\leq C_H(1+|p|)$. Here we have adopted the notation $|p|$ for the Euclidean norm of $p \in \R^N$
and $\bar{a}=\max_i a_i$.

We claim that, if
$\theta=\theta^+$ is large enough, then $U$ is a supersolution of~\eqref{eqaprox0}.
Indeed, from~\eqref{GiU},  for $x\in E_i$, we have
\begin{eqnarray*}
  && G_i(U,U_{x_i}(x),x)\geq  \lambda (\theta^+ -\bar{a}|p|)
  - \frac{(2\bar{a})^{1-\sigma} N \Lambda |p|}{1-\sigma} -  C_H(1+|p|) \geq 0,
\end{eqnarray*}
provided $\theta^+\geq \bar{a}|p|+ \lambda^{-1}( (2\bar{a})^{1-\sigma}(1-\sigma)^{-1}\Lambda N |p| +  C_H(1+|p|)$.
If, in addition,  $\theta^+\geq \bar{a}|p|+\max_i|h_i|$, then
$U(O)=\theta^+$ and $U_i(a_i)=\theta^+ + p_i a_i \geq h_i$. Therefore
the boundary conditions at the vertices are satisfied.
Notice that $\theta^+$ does not depend on $\epsilon$.


By Lemma~\ref{comp-censA}, we can compare the solution $u^{\epsilon, \theta^+}$ to~\eqref{eqaprox0} associated with $\theta^+$
with $U$ to obtain
that, for all $1\leq i \leq N$ and $x\in \bar J_i$,
\begin{eqnarray*}
&& u_i^{\epsilon, \theta^+}(x) \leq U_i(x)=\theta^+ +p_i x =  u_i^{\epsilon, \theta^+}(O)+p_i x,
\end{eqnarray*}
from which we deduce
\begin{eqnarray*}
 u_{x_i}^{\epsilon, \theta^+}(O)\leq p_i.
\end{eqnarray*}
Then
\begin{equation*}
F(\theta^+)=  \sum_{i=1}^N -u_{x_i}^{\epsilon, \theta^+}(O) -B\geq \sum_{i=1}^N -p_i -B=0.
\end{equation*}

In the same way, if we consider $\theta=\theta^-$ adequate (say, negative and large in absolute value), then $U$ is a subsolution of~\eqref{eqaprox0}.
Similarly, we obtain  that the solution
$u^{\epsilon, \theta^-}$ to problem~\eqref{eqaprox0} satisfies
$$
F(\theta^-)=  \sum_{i=1}^N -u_{x_i}^{\epsilon, \theta^-}(O) -B\leq \sum_{i=1}^N -p_i -B=0.
$$

Then, by continuity of $\theta\mapsto F(\theta)$
(Proposition~\ref{prop1} (v)),
there exists $\theta_\epsilon\in [\theta^-,\theta^+]$ such that the associated solution $u^{\epsilon, \theta_\epsilon}$
to problem~\eqref{eqaprox0} satisfies
\begin{equation}\label{Kir}
F( \theta_\epsilon)= 0,
\end{equation}
which is the Kirchhoff condition~\eqref{Kirchhoff}.
\medskip

\noindent{\it Step 2. The family $\{ u^{\epsilon, \theta_\epsilon} \}_\epsilon$ is uniformly Lipschitz continuous on a neighborhood of $O$.}
We will take profit of the Kirchhoff condition.
At first, since $\theta^\pm$ do not depend on $\epsilon$, the value 
$\theta_\epsilon$ is bounded from
above and below uniformly with respect to $\epsilon$. Thus, it follows that $u^\epsilon:= u^{\epsilon, \theta_\epsilon}$ is uniformly bounded with respect to $\epsilon$, with $L^\infty$-bounds given by $C_0$ in~\eqref{defC0} with $|\theta|=\max\{|\theta^+|,|\theta^-|\}$.

Let $\delta < \min_{1 \leq i \leq N} |a_i|$ and consider 
\begin{eqnarray}\label{gamma-d}
&& \Gamma^\delta := \{ x \in \Gamma : \rho(x, \ver_i) > \delta, 1\leq i\leq N\} 
= \Gamma\setminus \bigcup_{1\leq i\leq N} B(\ver_i,\delta).
\end{eqnarray}

Our goal is to prove that $u^\epsilon$ is Lipschitz continuous in $\Gamma^\delta$
uniformly with respect to $\epsilon$.

We proceed as in the proof of Proposition~\ref{prop1} (iii) borrowing some
arguments of~\cite[Proposition 6.3]{bbci18}.
Let $x_0\in \Gamma^\delta\cap \bar{E}_i$ and consider
the maximum as in~\eqref{sup683}, that is
\begin{eqnarray}\label{sup365}
&& \sup_{x\in \Gamma} \{ u^\epsilon (x) - u^\epsilon (x_0)- \varphi(x) \}
\end{eqnarray}
but this time with the modified function $\varphi =\varphi^i$ defined by
$$
\varphi^i (x) = \left \{ \begin{array}{ll} K\rho(x, x_0)& \text{if $x\in E_i$}\\
K \rho(O, x_0)+L\rho (x,O) & 
\text{if $x\in E_j$, $j\neq i$},
\end{array} \right .
$$
for $K,L\geq K_0$, where $K_0$ is defined
in the proof of Proposition~\ref{prop1}\,(iii) and does not depend on $\epsilon$.
With a straightforward adaptation of the proof of Proposition~\ref{prop1}\,(iii), we obtain that~\eqref{sup365} is achieved at $\bar{x}=x_0$ or at a vertex.
On the one hand, if $\bar{x}=x_0$, then the maximum~\eqref{sup365} is 0, from which we get the Lipschitz estimates on $\Gamma^\delta$.

On the other hand,
up to enlarge $K,L$ in order that $K,L\geq 4\| u^\epsilon\|_\infty/\delta$, 
we have $\rho(\bar{x},x_0)\leq \delta/2$
and therefore $\bar{x}$
cannot be equal to any exterior vertex 
$\ver_i, \ 1 \leq i \leq N$.

It remains to prove that the case $\bar{x}=O\neq x_0$ is not possible either.
If $\bar{x}=O$, then, using that $u^\epsilon$ is a classical (hence, viscosity) solution to the Kirchhoff problem, we have
\begin{equation*}
\sum_{1 \leq j \leq N} -\varphi_{x_j}^i(O) \leq B.
\end{equation*}

A direct computation shows that
$$
\varphi_{x_i}^i(O) = -K, \quad \varphi_{x_j}^i(O) = L, \ \mbox{for all} \ j \neq i,
$$
from which the Kirchhoff condition reads 
\begin{equation*}
K - (N - 1)L \leq B.
\end{equation*}
and from here, fixed $L$ as above and enlarging $K$, we get a contradiction. 

Finally, taking $K,L\geq \max \{K_0, 4\|u^\epsilon\|_\infty / \delta\}$ and $K >(N-1)L + B$, the maximum~\eqref{sup365} is 0.
Since, this is true for all $x_0\in \Gamma^\delta$, we obtain that $u^\epsilon$ is Lipschitz continuous
in $\Gamma^\delta$,
with a constant 
$\max \{K,L\}$ depending on $\delta$ but independent of $\epsilon$.

\medskip

\noindent{\it Step 3. Sending $\epsilon$ to 0.}
In the previous steps, we have obtained that 
$u^\epsilon:= u^{\epsilon, \theta_\epsilon}$ is bounded independently
of $\epsilon$ on $\Gamma$, and uniformly
Lipschitz continuous in $\Gamma^\delta$ for every $\delta >0$ small enough. Denote $\mathring \Gamma = \Gamma \setminus \{ \ver_i \}_{1 \leq i \leq N}$.
By Ascoli Theorem and a diagonal process, we
can extract a subsequence $\{ u^\epsilon \}_\epsilon$ (not relabeled) which converges locally uniformly
to some $u\in C(\mathring \Gamma)$, which is locally Lipchitz continuous in $\mathring \Gamma$
(notice that  we loose both the Lispchitz continuity and the barriers 
on $\{ \ver_i \}_{1 \leq i \leq N}$ when $\epsilon\to 0$).
Thanks to the coercivity assumption~\eqref{H}\,(iii), from~\cite[Theorem 2.1]{bklt15},
we have $u \in C^{0,\gamma}(\mathring \Gamma)$ for any $\gamma\in (0,1)$, with finite H\"older seminorm in $\mathring \Gamma$
(\cite[Theorem 2.1]{bklt15} is applied with $m=1$ and $\theta=0$, yielding $\gamma_0=1$).
It follows that $u$ can be extended by continuity up to the ends of the edges
into a function (still called $u$) in $C^{0,\gamma}(\Gamma)$.
By standard stability for viscosity solutions, $u$ is a viscosity solution of~\eqref{eq} in each edge $E_i$.
It remains to establish the boundary conditions.

We first prove that $u$ satisfies the generalized Kirchhoff condition at $O$.
We only check the subsolution property since the proof for the supersolution is similar.
If $\phi \in C^2(\Gamma)$ is such that $u - \phi$ has a strict maximum point at $O$, by uniform convergence
of $u^\epsilon$ in a neighborhood of $O$, there exists a sequence $x^\epsilon \to O$ of local maximum points of $u^\epsilon - \phi$. If $x^\epsilon = O$ along a subsequence $\epsilon\to 0$, we use the classical Kirchhoff
condition for $u^\epsilon$ to conclude that
$$
B = - \sum_{1\leq i \leq N} u_{x_i}^\epsilon(O) \geq - \sum_{1\leq i \leq N} \phi_{x_i}(O),
$$
from which the Kirchhoff condition is satisfied by $u$. On the other hand, if $x^\epsilon \in \Gamma\setminus \{0\}$ for all $\epsilon$ small,  then we can assume that there exists $i$ such that $x^\epsilon \in E_i$ for all $\epsilon$ (up to subsequences).
It follows $G_i^\delta (u^\epsilon, \phi , \phi_{x_i}(x_\epsilon), x_\epsilon)\leq 0$ and,
since $\phi_i \in C^1(\bar J_i)$ and $u^\epsilon\to u$ uniformly as $\epsilon\to 0$,
we can pass to the limit in the inequality to get
$G_i^\delta (u, \phi , \phi_{x_i}(O), O)\leq 0$ and conclude that
the generalized Kirchhoff condition holds.

We turn to the Dirichlet condition at $\ver_i$.
Since the barrier~\eqref{barriereaiabov} from above is independent of $\epsilon$,
we can pass to the limit $\epsilon\to 0$ in the equality $u_i^\epsilon (x)\leq h_i + L(a_i-x)$
to obtain $u_i (x)\leq h_i + L(a_i-x)$ for $0\leq x < a_i$. Then, sending $x\to a_i^-$
and recalling that $u$ is extended by continuity at $a_i$, we get
$u_i(a_i)\leq h_i$. Therefore, the boundary condition for subsolutions is satisfied pointwisely.

For the supersolution property at  $\ver_i$, the barrier from below  depend
on $\epsilon$ and we cannot conclude in the same way.
Consider $\phi \in C^2(\Gamma)$ such that $u - \phi$ has a strict minimum point at $\ver_i$.
For every $\epsilon >0$, let $x_\epsilon$ be a minimum of $u^\epsilon - \phi$ on $\Gamma$, which by standard arguments
is in $\bar E_i$, i.e.,
\begin{eqnarray}\label{min852}
&&  u_i^\epsilon (x) - \phi_i(x) \geq u_i^\epsilon (x_\epsilon)- \phi(x_\epsilon), \quad x\in \bar J_i.
\end{eqnarray}

By~\eqref{bound_uieps}, we can extract a subsequence (still denoted $\epsilon$) such that
$x_\epsilon\to x_0\in \bar J_i$ and $u_i^\epsilon (x_\epsilon)\to \ell \in [-C_0,C_0]$.
If $x_0\not= a_i$, then, thanks to the local uniform convergence in $[0,a_i)$,
passing to the limit in~\eqref{min852}, we obtain
$u_i (x) - \phi_i(x) \geq u_i(x_0)- \phi(x_0)$ for all $x\in \bar J_i$, which
is a contradiction with the strict minimum at $a_i$. Therefore $x_\epsilon\to a_i$.
Sending $\epsilon\to 0$ in~\eqref{min852} for $0\leq x<a_i$, we have
$u_i(x)-\phi_i(x)\geq \ell - \phi_i(a_i)$. Then, letting $x\to a_i^-$, we obtain 
\begin{eqnarray}\label{limueps123}
u_i(a_i)\geq \lim_{\epsilon\to 0}  u_i^\epsilon(x_\epsilon) = \ell.
\end{eqnarray}
It remains to consider the case when $x_\epsilon\to x_0=a_i$.
If $x_\epsilon =a_i$ along a subsequence, then, using that
$u^\epsilon$ satisfies the Dirichlet condition pointwisely at $\ver_i$,
yields at the limit $u_i(a_i)\geq \ell\geq h_i$. In this case
$u$ satisfies also the condition pointwisely.
Now we deal with the case when $x_\epsilon < a_i$ for all $\epsilon$.
Writing the viscosity supersolution inequality for
$u^\epsilon$, we get
\begin{eqnarray*}
&& G_i^\delta (u^\epsilon, \phi , \phi_{x_i}(x_\epsilon), x_\epsilon)\\
&=& \lambda u_i^\epsilon(x_\epsilon) - \I_i  [B_\delta^c (x_\epsilon)] u^\epsilon(x_\epsilon) - \I_i  [B_\delta (x_\epsilon)] \phi(x_\epsilon) + H_i(x_\epsilon, \phi_{x_i}(x_\epsilon))  
  \geq 0.
\end{eqnarray*}
We want to pass to the limit  $\epsilon\to 0$ in the above inequality.
On the one hand, by Lebesgue Theorem, using~\eqref{hyp-nu} and~\eqref{limueps123}, we have
\begin{eqnarray*}
&& \lim_{\epsilon\to 0}  \lambda u_i^\epsilon(x_\epsilon)-\I_i  [B_\delta^c (x_\epsilon)] u^\epsilon(x_\epsilon)\\
&=& \lambda \ell -\sum_{1\leq j\leq N}\int_{\{z_j\in J_j:  \rho(\ver_i,\gamma_j(z))\geq \delta \}} [u_j(z)-\ell]\nu_{ij}(\ver_i, \rho(\ver_i,\gamma_j(z)))dz \\
  &\leq &  \lambda u_i(a_i) -\sum_{1\leq j\leq N}\int_{\{z_j\in J_j:  \rho(\ver_i,\gamma_j(z))\geq \delta \}}
  \!\!\!\!\!\!\!\!\!\!\! [u_j(z)-u_i(a_i)]\nu_{ij}(\ver_i, \rho(\ver_i,\gamma_j(z)))dz\\
  &=&  \lambda u_i(a_i) -\I_i  [B_\delta^c (\ver_i)] u(\ver_i).
\end{eqnarray*}

On the other hand, 
thanks to the smoothness of $\phi$, the convergence of the other terms is obvious,
and we arrive finally at 
$G_i^\delta (u, \phi, \phi_{x_i}( \ver_i), \ver_i)\geq 0$,
which proves that the generalized Dirichlet condition holds at $\ver_i$.

Hence, we have concluded the existence of $u \in C^{0,\gamma}(\Gamma)$ for any $\gamma\in (0,1)$, solution of the Kirchhoff problem~\eqref{eq}-\eqref{Kirchhoff}-\eqref{dirichlet-bc}.
\end{proof}

\begin{rema}\label{loss-en-dessous}
$(i)$ Using the same global sub/supersolutions for the Kirchhoff-Dirichclet problem in Step 1 in the proof above, we can complement the standard Perron's method with the strong comparison principle given by Theorem~\ref{teo1} below, and directly obtain the existence and uniqueness of the generalized Kirchhoff problem~\eqref{eq}-\eqref{Kirchhoff}-\eqref{dirichlet-bc} at once.

\smallskip

\noindent
$(ii)$ Since all the subsolutions we use to have the estimates from below, namely~\eqref{barriereObel}-\eqref{barriereaibel},
depend on $\epsilon$, we may have
a loss of boundary conditions at the vertices when passing to the limit $\epsilon\to 0$.
This comes from the fact that there is no enough ellipticity in the nonlocal term
to counterbalance the coercivity of the Hamiltonian in absence of classical diffusion.
At the junction $O$, the
the Kirchhoff condition allows to recover some control from below
and to prove that the solution $u^\epsilon$ is
Lipschitz continuous uniformly in $\epsilon$, providing a continuous limit
$u$ at $O$. In contrast,
at the boundary point, the limit $u$ may be discontinuous.
But thanks to H\"older regularity results for coercive
equations, we can extend it by continuity up to the boundary into
a function satisfying the generalized Dirichlet boundary conditions
(see~\cite{bdl04} for related discussions).
\end{rema}


\section{Uniqueness for the Kirchhoff-type problem on a junction.}
\label{secuniqueness}

The comparison result is the following

\begin{teo}\label{teo1}
Assume~\eqref{steady}. Let $u\in USC(\Gamma)$ be a viscosity subsolution
and $v\in LSC(\Gamma)$ be a viscosity supersolution
to the problem~\eqref{eq}-\eqref{Kirchhoff}-\eqref{dirichlet-bc}.
Then, $u \leq v$ on $\Gamma$.

In particular, there exists a unique continuous viscosity solution $u\in C^{0,1}(\Gamma_\delta)$
to the problem, which coincides with
the solution found in Theorem~\ref{teoexistence}
(recall that $\Gamma^\delta$ is defined by~\eqref{gamma-d}).
\end{teo}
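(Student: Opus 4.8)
The plan is to prove comparison by the standard viscosity-solutions scheme, but exploiting the crucial structural facts already available: subsolutions are globally Lipschitz (by the argument of Proposition~\ref{prop1}(iii), which, as noted in the excerpt, applies verbatim to $USC$ subsolutions of the first-order Kirchhoff problem, giving us $u\in C^{0,1}(\Gamma)$), and the nonlocal order is $\sigma<1$ so that $\I_i u(O)$ and $\I_i v(O)$ are finite (Lemma~\ref{lem-F}, Lemma~\ref{regI}). Suppose for contradiction that $M:=\sup_\Gamma (u-v)>0$. First I would reduce to the case where the supremum is attained: since $u$ is continuous and $v$ is lsc on the compact set $\Gamma$, $M$ is attained at some $\bar x\in\Gamma$. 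If $\bar x\in E_i$ for some $i$ (an interior point of an edge) or $\bar x=\ver_i$, the conclusion follows from the classical one-dimensional comparison argument on $\bar J_i$ for nonlocal equations with coercive Hamiltonians (doubling variables in the single variable $x_i$; here the Lipschitz bound on $u$ makes the doubling penalization harmless, and at $\ver_i$ the Dirichlet condition closes the argument). So the only genuinely new case is $\bar x=O$.

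At $\bar x=O$, the idea, following Lions--Souganidis~\cite{ls17}, is to \emph{not} double variables but instead to test directly. Since $u$ is Lipschitz, $u-v$ attains its max $M>0$ at $O$; I would perturb by a test function of the form $\varphi(x)=\sum_i p_i x_i$ on each edge (affine on each $E_i$, matching at $O$) and use that $v-\varphi\cdot(-1)$-type comparisons force structure on the one-sided slopes. Concretely, one first shows that for the subsolution there is a vector ${\bf p}\in D^+_\Gamma u(O)$ with $u-\varphi$ maximal at $O$, where $\varphi_{x_i}(O)=p_i$; by the subsolution characterization of Lemma~\ref{equiv-def} at $O$, either $\sum_i(-p_i)\le B$ or $G_i(u,p_i,O)\le 0$ for some $i$. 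The heart of the matter is to engineer, using the Lipschitz bound on $u$ and a careful one-sided comparison on each edge near $O$, a single edge $i_0$ and slopes realizing the equation for $u$ and for $v$ at $O$ simultaneously, so that subtracting the two viscosity inequalities $G_{i_0}(u,p_{i_0},O)\le 0\le G_{i_0}(v,q_{i_0},O)$ yields
\begin{equation*}
\lambda(u(O)-v(O))\le \I_{i_0}u(O)-\I_{i_0}v(O) + H_{i_0}(O,q_{i_0})-H_{i_0}(O,p_{i_0}).
\end{equation*}
The nonlocal difference is $\le 0$ because $O$ is a global max of $u-v$ on all of $\Gamma$ (so $u(z)-u(O)\le v(z)-v(O)$ for every $z$, hence $\I_{i_0}u(O)-u(O)\cdot(\cdot)\le \I_{i_0}v(O)-\ldots$; here the finiteness from $\sigma<1$ is what makes this subtraction legitimate). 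The Hamiltonian difference is controlled by the coercivity/Lipschitz assumption~\eqref{H} once the critical slopes $p_{i_0},q_{i_0}$ are pinned down appropriately, and the contradiction is $\lambda M\le 0$.

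The main obstacle, as always in junction problems, is the bookkeeping at $O$: one must handle the disjunction in the Kirchhoff condition (the ``$\sum(-p_i)-B$'' branch vs.\ the ``$G_i$'' branch) for both sub- and supersolution and rule out the mixed case where, say, the subsolution satisfies the flux inequality $\sum_i(-p_i)\le B$ while the supersolution satisfies $\sum_i(-q_i)\ge B$ --- this is exactly where one needs the fine structure of admissible slopes and the Lions--Souganidis trick of sliding test functions along the edges to force the equation to hold. Once $\bar x=O$ is dispatched, the ``in particular'' clause is immediate: existence is Theorem~\ref{teoexistence}, uniqueness follows from the comparison just proved applied to two solutions in both directions, the resulting solution is $C^{0,1}(\Gamma^\delta)$ by the uniform Lipschitz bound of Step~2 in the proof of Theorem~\ref{teoexistence}, and it coincides with the constructed one because that one is also sub- and supersolution.
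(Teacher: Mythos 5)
Your overall strategy is the right one (no doubling at the junction, Lipschitz regularity of the subsolution via coercivity, finiteness of the nonlocal terms from $\sigma<1$, reduction to the case $\bar x=O$), but the proposal stops exactly where the proof actually starts. You write that ``the heart of the matter is to engineer a single edge $i_0$ and slopes realizing the equation for $u$ and for $v$ at $O$ simultaneously'' and that the ``main obstacle'' is the disjunction in the Kirchhoff condition --- and then you do not carry out either step. These are not bookkeeping issues; they are the content of the theorem.

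Concretely, two things are missing. First, the key lemma (Lemma~\ref{inegGi}\,(ii), the nonlocal version of the Lions--Souganidis observation): for a subsolution that is $C^{0,\sigma+\varepsilon}$ near $O$, the inequality $G_i(u,p,O)\le 0$ holds for \emph{every} $p\in[\underline p_i,\bar p_i]$, where $\underline p_i,\bar p_i$ are the one-sided $\liminf/\limsup$ difference quotients --- even though none of these $p$ except $\bar p_i$ lies in $D^+u_i(0)=[\bar p_i,+\infty)$. Without this, your ``subtract the two viscosity inequalities at a common slope'' step cannot be executed whenever $\underline q_i<\bar p_i$ strictly, since a slope admissible for both $u$ and $v$ in the naive (sub/superdifferential) sense would have to satisfy $\bar p_i\le p\le\underline q_i$, which may be impossible; proving the lemma in the nonlocal setting is precisely where the extra regularity is used (sliding the maximum point into the edge and passing to the limit in the nonlocal term). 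Second, the dichotomy that organizes the proof is not ``which branch of the min/max is active'' but the relative position of the critical slopes: either (Case 1) $\underline q_i\le\bar p_i$ for some $i$, in which case $\underline q_i$ is a common admissible slope (by the lemma for $u$, and by $\underline q_i\in D^-v_i(0)$ for $v$) and subtraction gives $\lambda M\le 0$; or (Case 2) $\bar p_i<\underline q_i$ for every $i$, in which case no common slope exists and one must instead apply the intermediate value theorem to $r\mapsto \lambda\tfrac{u(O)+v(O)}{2}-\I_iu(O)+H_i(O,r)$ on $(\bar p_i,\underline q_i)$ to produce slopes $r_i$ with $G_i(u,r_i-\varrho,O)>0$ and $G_i(v,r_i,O)<0$ for all $i$, forcing the Kirchhoff branch to be active for both $u$ and $v$ and yielding the incompatible flux inequalities $\sum_i-(r_i-\varrho)\le B$ and $\sum_i-r_i\ge B$. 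Your sketch covers neither the lemma nor Case 2, so as written the argument does not close.
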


\begin{lema}\label{sous-sol-lip}
  Let $u\in USC(\Gamma)$ be a bounded viscosity subsolution of~\eqref{eq}-\eqref{Kirchhoff}.
  Then $u$ is Lipschitz continuous on $\Gamma\delta$ for some $\delta > 0$.
\end{lema}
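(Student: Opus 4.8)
The plan is to reproduce, for a mere subsolution in place of a classical solution, the Lipschitz estimates of Proposition~\ref{prop1}\,(iii) together with Step~2 of the proof of Theorem~\ref{teoexistence}. Since no lower barrier is available for a subsolution, the only ingredient replacing it is the coercivity of the Hamiltonians; in particular no ellipticity is used, so the computations of Proposition~\ref{prop1}\,(iii) carry over verbatim with the second-order term deleted. Fix $\delta\in(0,\min_i a_i)$; as $u$ is bounded, $\osc_\Gamma u<\infty$. Given $x_0\in\Gamma^\delta$, say $x_0\in\bar E_i$, and a parameter $L>0$, set $K:=NL+|B|+1$ (so $L\le K\le(N+1)L$ once $L\ge|B|+1$) and consider
\[ M(x_0):=\sup_{x\in\Gamma}\bigl\{u(x)-u(x_0)-\varphi^i(x)\bigr\}, \]
where $\varphi^i$ is the broken test function of Step~2 of Theorem~\ref{teoexistence}: $\varphi^i(x)=K\rho(x,x_0)$ on $\bar E_i$ and $\varphi^i(x)=K\rho(O,x_0)+L\rho(x,O)$ on $\bar E_j$, $j\ne i$. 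Since $M(x_0)\ge 0$ and $u$ is USC, the supremum is attained at some $\bar x\in\Gamma$, and it will suffice to choose $L$, depending only on the data and $\delta$, so that $\bar x=x_0$: then $M(x_0)=0$, i.e. $u(y)-u(x_0)\le\varphi^i(y)\le K\rho(y,x_0)$ for all $y$, and exchanging $y$ and $x_0$ in $\Gamma^\delta$ gives the Lipschitz bound with constant $K=K(\delta)$.

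Suppose first that $\bar x\ne x_0$ lies in the interior of an edge $E_j$. Then $\varphi^i$ is $C^1$ near $\bar x$, so one may test the subsolution inequality for~\eqref{eq} at $\bar x$; using the coercivity of $H_j$ (hence $H_j(\bar x,\varphi^i_{x_j}(\bar x))\ge C_H^{-1}|\varphi^i_{x_j}(\bar x)|-C_H\ge C_H^{-1}L-C_H$) together with $|\I_j[B_\varrho^c(\bar x)]u(\bar x)|\le C\,\osc_\Gamma u\,\varrho^{-\sigma}$ and $|\I_j[B_\varrho(\bar x)]\varphi^i(\bar x)|\le CK\varrho^{1-\sigma}$, and optimizing in $\varrho$ exactly as in Step~1 of Proposition~\ref{prop1}\,(iii), one reaches a contradiction as soon as $L$ exceeds a threshold depending only on $N,\lambda,\Lambda,\sigma,C_H$ and the bounds on $u$. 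Next, from $\varphi^i(\bar x)\le u(\bar x)-u(x_0)\le\osc_\Gamma u$ one gets $\rho(\bar x,x_0)\le 2\osc_\Gamma u/L$, so enlarging $L$ to $L\ge 4\osc_\Gamma u/\delta$ forces $\rho(\bar x,\ver_k)>\delta/2$ for every $k$, and $\bar x$ is not a boundary vertex. It remains to exclude $\bar x=O\ne x_0$, which is the heart of the matter.

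In that case $u-\varphi^i$ has a global maximum at $O$, and as $\varphi^i_{x_i}(O)=-K$ and $\varphi^i_{x_k}(O)=L$ for $k\ne i$, the subsolution condition at $O$ (Definition~\ref{defi-visco}) reads, for every cutoff $\varrho>0$,
\[ \min\Bigl\{\min_{1\le k\le N}G_k^\varrho\bigl(u,\varphi^i,\varphi^i_{x_k}(O),O\bigr),\ K-(N-1)L-B\Bigr\}\le 0. \]
By the choice of $K$, the second entry equals $L+|B|+1-B\ge L+1>0$, hence is never active, so for every $\varrho$ some $k=k(\varrho)$ satisfies $G_k^\varrho\le 0$; along a subsequence $\varrho_m\downarrow 0$ one may fix $k$. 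Since $u\in\mathcal F_O$ (Lemma~\ref{lem-F}) and $\I_k[B_{\varrho_m}(O)]\varphi^i(O)\to 0$, letting $m\to\infty$ gives $\I_k u(O)$ finite and $\lambda u(O)-\I_k u(O)+H_k(O,\varphi^i_{x_k}(O))\le 0$. One then bounds $\I_k u(O)$ at a small fixed $\varrho$: its part on $B_\varrho^c(O)$ is $\le C\,\osc_\Gamma u\,\varrho^{-\sigma}$ by boundedness, and, by the maximality of $u-\varphi^i$ and $\rho(z,x_0)-\rho(O,x_0)\le\rho(z,O)$,
\[ \I_k[B_\varrho(O)]u(O)\le\I_k[B_\varrho(O)]\varphi^i(O)\le\frac{\Lambda\bigl(K+(N-1)L\bigr)}{1-\sigma}\varrho^{1-\sigma}\le\frac{2N\Lambda L}{1-\sigma}\varrho^{1-\sigma}. \]
With $H_k(O,p)\ge C_H^{-1}|p|-C_H$ and $|\varphi^i_{x_k}(O)|\ge L$ this yields
\[ C_H^{-1}L-C_H\le C\,\osc_\Gamma u\,\varrho^{-\sigma}+\frac{2N\Lambda L}{1-\sigma}\varrho^{1-\sigma}+\lambda\|u\|_{L^\infty(\Gamma)}, \]
and fixing $\varrho$ so small that $2N\Lambda\varrho^{1-\sigma}/(1-\sigma)\le\tfrac12 C_H^{-1}$, then $L$ larger than $2C_H\bigl(C_H+C\,\osc_\Gamma u\,\varrho^{-\sigma}+\lambda\|u\|_{L^\infty(\Gamma)}\bigr)$, produces a contradiction. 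Hence $\bar x=x_0$, $M(x_0)=0$, and $u$ is Lipschitz on $\Gamma^\delta$.

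The step I expect to be the main obstacle is this exclusion of $\bar x=O$: since $u$ is only a subsolution, the classical Kirchhoff relation is unavailable, so one cannot simply conclude $K-(N-1)L\le B$ as when $u^\epsilon$ is a genuine solution in Theorem~\ref{teoexistence}; one must in addition discard the edgewise viscosity inequalities $G_k^\varrho\le 0$, whose nonlocal term $\I_k[B_\varrho(O)]\varphi^i(O)$ picks up a contribution of order $K\varrho^{1-\sigma}$ from the steep branch $E_i$. Keeping $K$ comparable to $L$ --- the choice $K=NL+|B|+1$ instead of $K\gg L$ --- is precisely what makes it possible to fix a small $\varrho$ first and only then take $L$ large, closing all the estimates at once.
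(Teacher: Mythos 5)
Your proof is correct and follows essentially the same route as the paper's: reuse the coercive-Hamiltonian estimates of Proposition~\ref{prop1}\,(iii) with the broken test function $\varphi^i$ of Theorem~\ref{teoexistence}, Step~2, and at $O$ rule out both branches of the junction subsolution condition (edgewise inequalities by coercivity, Kirchhoff inequality by the choice of $K$ versus $L$). The only difference is organizational and welcome: you fix $K=NL+|B|+1$ so the Kirchhoff entry is positive from the outset and then kill the edgewise inequalities, whereas the paper argues in the opposite order, and your explicit coupling $K\sim L$ makes transparent why the term $\I_k[B_\varrho(O)]\varphi^i(O)=O(K\varrho^{1-\sigma})$ can be absorbed by the coercivity contribution $C_H^{-1}L$ --- a point the paper leaves implicit when it invokes Step~1 at the junction.
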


\begin{proof}
We follow the proof of the Lipschitz estimates
in Proposition~\ref{prop1}\,(iii). Steps~1 and~2 work readily
for any $USC$ viscosity subsolution of~\eqref{eq}-\eqref{Kirchhoff}-\eqref{dirichlet-bc};
they rely only on the coercivity of the Hamiltonian
and do not depend on the ellipticity
constant in the equation.

Nevertheless, we cannot follow the proof of Step~3 since the barriers from below are not true
anymore due to the degeneracy of the equation.
To recover the Lipschitz estimate at the junction point,
we revisit the proof of the uniform
Lipschitz estimates near the junction in Theorem~\ref{teoexistence}.
It is enough to prove that the maximum in~\eqref{sup365}, with $u^\epsilon=u$ subsolution to~\eqref{eq}-\eqref{Kirchhoff}
and $x_0\in \Gamma_\delta\cap E_i$ for some $i$, cannot be achieved
at $\bar x= O$ for $K,L$ large enough.

Indeed, we first notice, as a direct consequence of Step~1 in
the proof of Proposition~\ref{prop1}\,(iii),
and the choice of $K\geq K_0$,
that the PDE~\eqref{eq} cannot hold at $\bar x=O$ for subsolutions, i.e.,
$G_i^\delta (u,\varphi, \varphi_{x_i}(0), O) >0$ for any $1\leq i\leq N$.
We then prove that the Kirchhoff condition at $O$ does not
hold neither for large $K,L$
exactly as in the proof of Theorem~\ref{teoexistence}, Step 2. Thus, the maximum in~\eqref{sup365} holds in the interior of the branch, from which the result holds by the choice of $K,L$. 
\end{proof}  


In order to prove Theorem~\ref{teo1}, we first have to examine the viscosity inequalities which are
satisfied at the junction on each branch in terms of sub- and superdifferential of the solutions (see~\cite{ls17, bc24}).   

To do so, for each $i$, we define
\begin{eqnarray*}
&& \bar p_i = \limsup_{x_i \to 0^+} \frac{u_i(x_i) - u_i(0)}{x_i}, \qquad \underline p_i = \liminf_{x_i \to 0^+} \frac{u_i(x_i) - u_i(0)}{x_i}, \\
&& \bar q_i = \limsup_{x_i \to 0^+} \frac{v_i(x_i) - v_i(0)}{x_i}, \qquad \underline q_i = \liminf_{x_i \to 0^+} \frac{v_i(x_i) - v_i(0)}{x_i}.
\end{eqnarray*}

Notice that, if $D^+ u_i(0) \neq \emptyset$, then $\bar p_i < +\infty$, meanwhile, if $D^- v_i(0) \neq \emptyset$, then
$\underline q_i > -\infty$. And both cases happen if either $D^+_\Gamma u(O)\not= \emptyset$ or $D^-_\Gamma v(O)\not=~\emptyset$.

\begin{lema}\label{inegGi}
Let $u\in USC(\Gamma)$ (respectively  $v\in LSC(\Gamma)$)
be a bounded visco\-sity subsolution (respectively supersolution)
of~\eqref{eq}-\eqref{Kirchhoff} such that  $D^+_\Gamma u(O)\not= \emptyset$
(respectively $D^-_\Gamma u(O)\not= \emptyset$).
Then

\begin{itemize}
\item[(i)]
If $\bar p_i$ is finite, then $D^+ u_i(0)=[\bar p_i ,+\infty)$, $\I_i u(O) \in \R$ and
\begin{eqnarray*}
&& G_i (u,\bar p_i,O)\leq 0.
\end{eqnarray*}
Similarly, if  
$\underline q_i$ is finite, then $D^- v_i(0)=(-\infty, \underline q_i]$, $\I_i v(O) \in \R$ and
\begin{eqnarray*}
&& G_i (v,\underline q_i,O)\geq 0.
\end{eqnarray*}

\item[(ii)]
If, in addition, $u$ is $C^{0,\sigma + \varepsilon}$ in an $\Gamma$-neighborhood of $O$ for some $\varepsilon> 0$, then we have
\begin{eqnarray}\label{inegGip}
&& G_i (u,p,O)\leq 0 \qquad \mbox{for all} \ p \in (\underline p_i, \bar p_i),
\end{eqnarray}
and the inequality holds for $p = \underline p_i$ and/or $p = \bar p_i$ if some is finite.
Similarly, if $v$ is $C^{0,\sigma + \varepsilon}$ in an $\Gamma$-neighborhood of $O$ for some $\varepsilon> 0$, then we have
\begin{eqnarray*}
&& G_i (v,q,O)\geq 0 \qquad \mbox{for all} \ q \in (\underline q_i, \bar q_i), 
\end{eqnarray*}
and the inequality holds for $q = \underline q_i$ and/or $q = \bar q_i$ if some is finite.

\end{itemize}
\end{lema}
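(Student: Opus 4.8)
The plan is to establish the subsolution statements; the supersolution ones follow by the symmetric argument (reverse the inequalities, replace $\min$ by $\max$, $D^+$ by $D^-$, and $\bar p_i$ by $\underline q_i$). Two preliminary facts are used throughout. First, being a bounded subsolution of~\eqref{eq}-\eqref{Kirchhoff}, $u$ is Lipschitz continuous on an $\bar E_i$-neighbourhood of $O$ by Lemma~\ref{sous-sol-lip}; hence $\bar p_i,\underline p_i$ are finite, $u_i$ is continuous at $0$, and by Lemma~\ref{regI} applied on that neighbourhood, $x\mapsto\I_i u(x)$ is continuous near $O$ with $\I_i u(O)\in\R$ (this already gives the finiteness claimed in the statement). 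Second, we use the equivalent formulation of Lemma~\ref{equiv-def}: at an interior point $x\in E_i$, for every $p\in D^+_{\bar J_i}u_i(x_i)$ one has $G_i(u,p,x)\le0$. The identity $D^+u_i(0)=[\bar p_i,+\infty)$ is immediate: unwinding the definition of the superdifferential at the left endpoint, $p\in D^+u_i(0)$ iff $\limsup_{x_i\to0^+}\frac{u_i(x_i)-u_i(0)}{x_i}\le p$, i.e.\ iff $p\ge\bar p_i$.

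For the inequality $G_i(u,\bar p_i,O)\le0$ of~(i) I would blow up $u$ at $O$. Fix a small $h_0>0$ on which $u_i$ is $K$-Lipschitz, choose $t_n\downarrow0$ with $R(t_n)\to\bar p_i$ where $R(t):=\frac{u_i(t)-u_i(0)}{t}$, and set $v_n(s):=\frac{u_i(t_ns)-u_i(0)}{t_n}=sR(t_ns)$ for $s\in[0,h_0/t_n)$. The $v_n$ are equi-$K$-Lipschitz with $v_n(0)=0$, so along a subsequence $v_n\to v$ locally uniformly on $[0,+\infty)$, $v$ being $K$-Lipschitz with $v(0)=0$ and $v(1)=\bar p_i$; moreover $v(s)\le\bar p_is$ for every $s\ge0$, since for fixed $s$ and any $\varepsilon>0$ one has $R(t_ns)\le\bar p_i+\varepsilon$ for $n$ large (here $\limsup_{\tau\to0^+}R(\tau)=\bar p_i$ is crucial). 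A change of variables shows $v_n$ is a viscosity subsolution on $(0,h_0/t_n)$ of $F_n(s,v_n,v_n')=0$ with $F_n(s,r,p):=\lambda\bigl(u_i(0)+t_nr\bigr)-\I_i u(\gamma_i(t_ns))+H_i(\gamma_i(t_ns),p)$: if $\psi$ touches $v_n$ from above at $s_0$, then $\tilde\psi(x_i):=u_i(0)+t_n\psi(x_i/t_n)$ touches $u_i$ from above at $t_ns_0\in E_i$ with $\tilde\psi'(t_ns_0)=\psi'(s_0)$, and the viscosity inequality for $u$ at $t_ns_0$ (recalling $u_i(t_ns_0)=u_i(0)+t_nv_n(s_0)$) is exactly $F_n(s_0,v_n(s_0),\psi'(s_0))\le0$. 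Since $\gamma_i(t_ns)\to O$ uniformly for $s$ bounded, $\I_i u$ is continuous near $O$, $H_i$ is continuous and $t_nv_n\to0$, we get $F_n\to F$ uniformly on compacts with $F(p)=\lambda u_i(0)-\I_i u(O)+H_i(O,p)$; by stability of viscosity subsolutions, $v$ is a viscosity subsolution of $H_i(O,v'(s))\le\I_i u(O)-\lambda u_i(0)$ on $(0,+\infty)$. Finally $v(s)\le\bar p_is=v(1)+\bar p_i(s-1)$ for all $s$ means $\bar p_i\in D^+v(1)$, and since $s=1$ is an interior point the viscosity inequality yields $H_i(O,\bar p_i)\le\I_i u(O)-\lambda u_i(0)$, that is $G_i(u,\bar p_i,O)\le0$.

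For~(ii), fix $p\in(\underline p_i,\bar p_i)$ and set $g(t):=u_i(t)-u_i(0)-pt$, continuous near $0$ with $g(0)=0$. As $\limsup_{t\to0^+}\frac{g(t)}{t}=\bar p_i-p>0$ and $\liminf_{t\to0^+}\frac{g(t)}{t}=\underline p_i-p<0$, $g$ has strictly positive and strictly negative values on every $(0,\eta)$; using interlaced sequences and the intermediate value theorem one finds $t_n\downarrow0$ with $g(t_n)=0$ and $g>0$ somewhere on $(0,t_n)$. Then $\max_{[0,t_n]}g>0=g(0)=g(t_n)$, so the maximum is attained at an interior $\tau_n\in(0,t_n)$ with $\tau_n\to0$; the affine map $x_i\mapsto u_i(0)+px_i$ touches $u_i$ from above at $\tau_n$, hence $p\in D^+u_i(\tau_n)$ and $G_i(u,p,\gamma_i(\tau_n))\le0$. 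Passing to the limit $n\to\infty$, with $u_i(\tau_n)\to u_i(0)$, $\I_i u(\gamma_i(\tau_n))\to\I_i u(O)$ and $H_i(\gamma_i(\tau_n),p)\to H_i(O,p)$, yields $G_i(u,p,O)\le0$. Letting $p\to\bar p_i^-$ or $p\to\underline p_i^+$ and using continuity of $H_i(O,\cdot)$ (all other terms being fixed) extends the inequality to the finite endpoints; this also re-proves the last assertion of~(i).

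The main obstacle is the blow-up in~(i): the two nontrivial points are checking that the rescaled functions are viscosity subsolutions of the rescaled equations (a clean change of variables through Lemma~\ref{equiv-def}) and, more importantly, that the blow-up profile $v$ lies below the line $s\mapsto\bar p_is$ on the whole of $[0,+\infty)$, which is exactly what makes $\bar p_i$ an \emph{interior} supergradient of $v$ and unlocks the viscosity inequality. Carrying the nonlocal term through the rescaling is harmless because $\sigma<1$ and $u$ is Lipschitz near $O$, so $\I_i u$ is a genuine continuous function near $O$ that simply freezes to the constant $\I_i u(O)$ in the limit; the remaining steps (Arzelà–Ascoli, stability of viscosity subsolutions) are routine.
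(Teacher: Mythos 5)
Your blow-up argument for the \emph{subsolution} half of (i) is correct and genuinely different from the paper's route (the paper instead perturbs a test function $\varphi$ with $\varphi_{x_i}(O)=\bar p_i$ by $\pm\epsilon x_j\delta_{ij}$ on the branch $E_i$ and tracks where the extremum points of $u-\varphi^\epsilon$ go), and your interior-maximum argument for (ii) on the open interval $(\underline p_i,\bar p_i)$ is essentially the paper's. However, there are two genuine gaps. The first concerns the supersolution statements, which do \emph{not} follow ``by the symmetric argument''. Everything in your proof of (i) rests on Lipschitz continuity near $O$: the equi-Lipschitz rescalings, Arzel\`a--Ascoli, the continuity and finiteness of $\I_i u$ near $O$ via Lemma~\ref{regI}, even the finiteness of the slopes. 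Lemma~\ref{sous-sol-lip} supplies this for subsolutions only; a supersolution $v$ is merely LSC, and the paper stresses right after the statement that this lack of regularity is precisely why the comparison proof may use (ii) for $u$ but only (i) for $v$. For a discontinuous $v$ your rescaled functions need not converge and $\I_i v(O)$ is a priori only an element of $(-\infty,+\infty]$. The paper's proof of (i) for supersolutions is regularity-free: it takes $\varphi\in C^1(\Gamma)$ with $\varphi_{x_i}(O)=\underline q_i$ touching $v$ from below at a strict global minimum, adds $\epsilon x_i$ on $E_i$ alone, shows the perturbed minimum points lie in $E_i$ and tend to $O$, passes to the limit in $G_i^\delta$ at fixed $\delta$ by dominated convergence, and only then sends $\delta\downarrow0$; the finiteness of $\I_i v(O)$ is a by-product of the resulting inequality, not an input. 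Some argument of this kind is indispensable for the half of the lemma that the comparison proof actually uses for $v$.

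The second gap is in (ii): you obtain the endpoint inequalities by letting $p\to\bar p_i^-$ or $p\to\underline p_i^+$, which is vacuous when $\underline p_i=\bar p_i$. This case is not degenerate --- it occurs whenever $u_i$ happens to be differentiable at $0$, and since $0$ is a boundary point of $J_i$ one cannot invoke the interior viscosity inequality there directly (the junction condition, not the equation on branch $i$, is what holds at $O$). The paper resolves it by perturbing $u_i$ with $\epsilon x\sin(\log x)$, which splits the slopes into $\underline p_i-\epsilon<\bar p_i+\epsilon$, reruns the interior-maximum argument for the perturbed function, and removes the perturbation using the Lipschitz continuity of $H_i$ in $p$. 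You would need this (or an equivalent device) to cover the full claim; note also that your intermediate-value construction of the points $\tau_n$ uses continuity of $u_i$ near $0$, which on the supersolution side is available only through the extra H\"older hypothesis of (ii), not in general.
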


Part~(i) of Lemma~\ref{inegGi} is a basic result: it is the nonlocal analogue to \cite[Proposition~2.10, p.~84]{bc24} and
it does not require any additional regularity. On the contrary, Part~(ii) is a more sophisticated result inspired from Lions and 
Souganidis~\cite{ls16,ls17} (see also \cite[Lemma 15.1, p.258]{bc24}) and it is also surprising because none of the
$p \in (\underline p_i, \bar p_i)$ is in $D^+ u_i(0)$ and none of the $q \in (\underline q_i, \bar q_i)$ is in $D^- v_i(0)$.
Unfortunately, we were unable obtain this stronger result without the additional regularity we impose; of course, the difficulty was
to take into account the nonlocal term. This additional assumption is automatically satisfied by subsolutions since they
are Lipschitz continuous by Lemma~\ref{sous-sol-lip} but it is not the case for supersolutions.  For this reason, in the comparison proof, we can use Result~(ii) for subsolutions but only Result~(i) for supersolutions.


\begin{proof} We first prove (i). We provide the proof for supersolutions, the one for subsolutions follows from the same arguments.

Since $D_\Gamma^- v(O) \neq \emptyset$, there exists a function $\varphi \in C^1(\Gamma)$ such that
$v- \varphi$ has a minimum at $O$. It is possible to choose $\varphi$ is order that the minimum is global over
$\Gamma$ and strict at $O$. Using that $\underline q_i$ is finite, we have that $D^- v_i(0) = (-\infty, \underline q_i]$
(see~\cite[Proposition 2.10, p.~84]{bc24}), and, up to a modification of $\varphi$ in $E_i$, we can take $\varphi$ such that $\varphi_{x_i}(O) = \underline q_i$.

Now, for $\epsilon > 0$, we consider the perturbed test-function $\varphi^\varepsilon$ given on $E_j$ ($j=1,...,N$), by
$$
\varphi^\varepsilon_j(x_j) = \varphi_j (x_j)+ \delta_{ij} \epsilon x_j, 
$$
where $\delta_{ij}$ is the usual Kronecker symbol. 

We claim that there exists a sequence $x^\varepsilon \in E_i$ of global minimum points for $v - \varphi^\varepsilon$, and
this sequence tends to $O$. The existence of a sequence  of global minimum points $x^\varepsilon \in \Gamma$, which tends to
$O$, is given by standard arguments since $O$ is a global, strict minimum point of $v-\varphi$ on $\Gamma$. 

It remains to prove that $x^\varepsilon \in E_i$. In fact, if $x^\varepsilon \in E_j$ with $j \neq i$, then
we have $v(x^\varepsilon) - \varphi^\varepsilon(x^\varepsilon) = v(x^\varepsilon) - \varphi(x^\varepsilon) >  v(O) - \varphi(O)
=v(O) - \varphi^\varepsilon(O)$ by the strict minimum property.
Thus, $x^\varepsilon \in \bar E_i$. If $x^\varepsilon = O$ along a subsequence, for each $x \in E_i$ we have
$$
v_i (x_i) - \varphi^\varepsilon_i(x_i)=v_i(x_i) - \varphi_i (x_i) - \epsilon x_i \geq v_i(0) - \varphi_i(0),
$$
from which we deduce
$$
v_i(x_i) \geq v_i(0) + (\varphi_i(x_i) + \epsilon x_i - \varphi_i(0)) \geq v_i(0) + (\underline q_i + \epsilon) x_i + o(x_i),
$$
and $\underline q_i + \epsilon \in D^- v_i(0)$, which contradicts the maximality of $\underline q_i$ in the subdifferential of $v_i$ at $0$. It ends the proof of the claim.

It follows that we can write the viscosity inequality for the supersolution 
at $x^\varepsilon \in E_i$, and since the testing is global, for all $\delta > 0$, we have 
$$
G_i^\delta(v, \varphi^\varepsilon, \underline q_i + \epsilon, x^\varepsilon) \geq 0.
$$
Using that $v\in LSC(\Gamma)$, we have $v(x^\varepsilon) \to v(O)$ by standard arguments. Adding that $\varphi^\varepsilon$ is Lipschitz continuous with a constant bounded uniformly with respect to $\epsilon$, we can control the nonlocal terms $\I_i[B_\delta(x^\varepsilon)] \varphi^\varepsilon(x^\varepsilon)$ and $\I_i[B_\delta^c(x^\varepsilon)] v(x^\varepsilon)$ with Dominated Convergence Theorem, for all
fixed  $\delta > 0$. It allows to pass to the limit $x^\varepsilon \to 0$
in the previous inequality to obtain
\begin{equation*}
G_i^\delta(v, \varphi , \underline q_i, O) \geq 0.
\end{equation*}
By Lemma~\ref{lem-F}, $v\in \mathcal{F}_O$ and we can send $\delta\downarrow 0$ to conclude that $G_i(v, \underline q_i, O) \geq 0$ holds. As a by-product, we have that $\mathcal{I}_i v(O)$ is finite.\\

Now we turn to the proof of (ii) which is strongly inspired by~\cite[Lemma 15.1, p.258]{bc24}.
We concentrate on the subsolution's case, the proof for supersolution being similar.
Since $D^+_\Gamma u(O)\not= \emptyset$, we know, by Lemma~\ref{lem-F}, that $u\in \mathcal{F}_O$
and we can find $\varphi\in C^1(\Gamma)$ such that $u-\varphi$ achieves a maximum at $O$. We can even assume that
$u(O)=\varphi (O)$.

We fix $1 \leq i \leq N$. We first assume that $\underline p_i < \bar p_i$ and we take any $p \in (\underline p_i, \bar p_i)$.
By definition of $\underline p_i, \bar p_i$, there exists sequences $0 < a_k < b_k$ converging to $0$ such that
\begin{equation}\label{ineg498}
u_i(b_k) - u_i(0) \leq (\underline p_i + k^{-1}) b_k , \quad u_i(a_k) - u_i(0) \geq (\bar p_i- k^{-1})a_k.
\end{equation}

For $k$ large enough, in order to have $ \underline p_i +k^{-1}<p< \bar p_i-k^{-1}$, we consider the function
$$
y_i \in [0,b_k] \mapsto \chi(y_i):=u_i(y_i) - u_i(0) - py_i.
$$
We have $\chi(0)=0$, $\chi(b_k)<0$ and $\chi(a_k)>0$ and therefore the function $\chi$ has a maximum points $y_k \in (0,b_k)$.
Of course, $y_k \to 0$ and, by classical arguments, $u_i(y_k)\to u_i(0)$. 

Modifying the function $\varphi$ by setting  $\varphi_i(y)=u_i(0)+ py$ on the branch $E_i$, we obtain that $u-\varphi$ has a local maximum inside the branch $E_i$ at $\gamma_i(y_k)$. Since $u$ is $(\sigma+\varepsilon)$-H\"older continuous, we use Lemma~\ref{regI} and Lemma~\ref{equiv-def} to arrive at
\begin{equation*}
G_i(u , p , \gamma_i(y_k)) \leq 0,
\end{equation*}
and since $G_i$ is continuous, using Lemma~\ref{regI} again we conclude that
$$
G_i(u, p, O) \leq 0.
$$

When $\underline p_i$ is finite (respectively $\bar p_i$ is finite) with $\underline p_i < \bar p_i$, the same inequality holds
for $p=\underline p_i$ (respectively $p=\bar p_i$) by continuity, letting $p$ tend to $\underline p_i$ (respectively to $\bar p_i$).

It remains to treat the case when both $\underline p_i$ and $\bar p_i$ are finite with
$p=\underline p_i=\bar p_i$. In this case, we perturb $u$ in the branch $E_i$
by setting 
$$
u^\epsilon_i(x)=u_i(x)+\epsilon x\sin(\log (x)), \ x \in J_i, \quad  u_i^\epsilon(0) = u_i(0),
$$ 
which
converges uniformly to $u_i$ in $\bar J_i$ as $\epsilon \to 0$, and satisfies
\begin{eqnarray*}
  \liminf_{x \to 0^+} \frac{u_i^\epsilon(x) - u_i^\epsilon(O)}{x}=: \underline{p}_i^\epsilon =  \underline p_i-\epsilon
  < \bar{p}_i +\epsilon = \bar{p}_i^\epsilon :=
\limsup_{x \to 0^+} \frac{u_i^\epsilon(x) - u_i^\epsilon(O)}{x}.
\end{eqnarray*}

Denote $\phi(x) = x\sin(\log (x))$. As in the first step above, since $\underline{p}_i^\epsilon <p< \bar{p}_i^\epsilon$, the function
$$
y \in \bar J_i \mapsto  u_i^\epsilon(y) - u_i^\epsilon(O) - py = u_i(y) - u_i(O) - py + \epsilon \phi(y)
$$
has a sequence of local maximum points $y_k \to 0^+$.
We modify the test function $\varphi$ in $E_i$ by setting
$\varphi_i (x)= p x + \epsilon \phi(x)$. Noticing that $\phi$ is Lipschitz continuous.
Writing the viscosity inequality for $u$
at $y_k$, and using the regularity of $u$, we obtain
$G_i (u, p+\epsilon \phi_x(y_k), \gamma_i(y_k))\leq 0$.
Using~\eqref{H}\,(ii),
it follows $G_i (u, p, \gamma(y_k))\leq O(\epsilon)$,
where $O(\epsilon)$ is independent of $k$. Sending $y_k\to 0$,
we infer $G_i (u , p, O)\leq O(\epsilon)$.
We then send $\epsilon\to 0$ and finally $\delta\downarrow 0$ to conclude. 
\end{proof}


\begin{proof}[Proof of Theorem~\ref{teo1}]
By contradiction, we assume that
$$
\sup_{\Gamma} \{ u - v \} > 0.
$$

Since $\Gamma$ is compact, the supremum is attained at some point $x_0 \in \Gamma$. 

If $x_0 \in \Gamma \setminus \VV$, where $\VV$ is the set of vertices,
then the proof follows standard arguments in the viscosity theory.
If $x_0 = \ver_i$ for some $i$,
then the proof follows the boundary analysis presented in~\cite{topp14, soner86a}, which is possible since $u \in C^\gamma(\Gamma)$ for all $\gamma \in (0,1)$.
So, we consider only the case $x_0 = O$, that is 
\begin{equation*}
\max_{\Gamma} \{ u - v \} = (u - v)(O)=: M > 0,
\end{equation*}
which concentrates the main difficulties.
We follow closely the lines of~\cite{ls17}, see also~\cite[Section 15.3]{bc24}.

At first, notice that $u$ is Lipschitz continuous on $\Gamma_\delta$ for some $\delta >0$
by Lemma~\ref{sous-sol-lip}. From Lemma~\ref{regI}, we obtain that $\I_iu(O)\in\R$
for all $i$. Next, since $u$ is touching $v$ from below at $O$, from Lemma~\ref{lem-F},
we obtain that $v\in \mathcal{F}_O$ and
\begin{equation}\label{comp-nl234}
-\infty < \I_i u(O)\leq \I_i v(O) \leq +\infty , \quad \text{for all $i$.}
\end{equation}

Notice that $u\not\in C^1(\Gamma_\delta)$ as required in the statement of Lemma~\ref{lem-F}
but $u\in C^{0,1}(\Gamma_\delta)$ is enough in the proof.
Using the Lipschitz continuity of $u$, we can apply Lemma~\ref{inegGi}~(ii) to $u$
and then Lemma~\ref{inegGi}~(i) to $v$. Since $O$ is a maximum point of $u-v$, we also have
\begin{equation*}
  -\infty < \underline p_i \leq \bar p_i <+\infty
  \quad \text{and} \quad
   \underline p_i \leq \underline q_i \leq +\infty \quad \text{for all $i$.}
\end{equation*}

We divide the analysis in two cases.

\smallskip

\noindent
\textsl{Case 1. There exists $i$ such that $\underline q_i \leq \bar p_i$.} We have $\underline q_i$ is finite and we can choose $\underline q_i$ in the viscosity inequality of $u$ and $v$, from which we get
\begin{align*}
G_i(u,\underline q_i,O)=\lambda u(O) - \sum_{1\leq j \leq N} \int_{J_j} [u_j(z)- u(O)] \nu_{i j}(O, z)dz + H_i(O, \underline q_i) & \leq 0, \\
G_i(v,\underline q_i,O)=\lambda v(O) - \sum_{1\leq j \leq N} \int_{J_j} [v_j(z)- v(O)] \nu_{i j}(O, z)dz + H_i(O, \underline q_i) & \geq 0,
\end{align*}
where both nonlocal terms are finite.

Subtracting both inequalities, we arrive at
\begin{equation*}
\lambda M \leq \sum_{1\leq j \leq N} \int_{J_j} [(u_j - v_j)(z) - M] \nu_{ij}(O, z) dz,
\end{equation*}
and since $O$ is the maximum point of $u - v$ on $\Gamma$, we arrive at 
$0 < \lambda M \leq 0$, which is a contradiction. This concludes this case.

\smallskip




\noindent
\textit{Case 2. For all $i$, $\bar p_i <\underline q_i \leq +\infty$.} In this case, for each $i$, we define
\begin{align*}
r \in [\bar p_i, \underline q_i] \mapsto U_i(r) := \lambda \frac{u(O) + v(O)}{2} - \I_i u(O) + H_i(O, r).
\end{align*}

By Lemma~\ref{inegGi}~(ii), we have 
\begin{equation*}
U_i(\bar p_i) = G_i(u, \bar p_i, O) - \lambda \frac{M}{2} < 0.
\end{equation*}
Meanwhile,  if $\underline q_i <+\infty$, then, by Lemma~\ref{inegGi}~(i), we have that $\I_i v(O)$ is finite, and
\begin{eqnarray*}
  && U_i(\underline q_i)=  G_i(u, \underline q_i, O) - \lambda \frac{M}{2}
  \geq  G_i(v, \underline q_i, O) + \lambda \frac{M}{2} >0.
\end{eqnarray*}
If $\underline q_i =+\infty$, then, thanks to the coercivity of $H_i$
at $+\infty$, we can find $\hat q_i >\bar p_i$ such that
\begin{eqnarray*}
  && U_i(\hat q_i)=  G_i(u, \hat q_i, O) - \lambda \frac{M}{2}>0.
\end{eqnarray*}

By continuity of $U_i$, there exists $r_i \in (\bar p_i, \underline q_i)$
(or in $(\bar p_i, \hat q_i)$ in the case $\underline q_i=+\infty$)
such that 
$$
U_i(r_i) = 0 \quad \mbox{for all} \ i,
$$
hence
\begin{eqnarray*}
G_i(u, r_i, O) = U_i(r_i) +  \lambda \frac{M}{2} >0,  \quad \mbox{for all} \ 1\leq i\leq N.
\end{eqnarray*}

Using the continuity of $H_i$, it follows that there exists $\varrho >0$ small enough
such that $r_i - \varrho > \bar p_i$ and
\begin{equation}\label{ineq053}
G_i(u, r_i - \varrho, O) > 0 \quad \mbox{for all} \ i.
\end{equation}

For all $1\leq i\leq N$ and $x\in \bar J_i$, by definition of $\bar p_i$, we have
\begin{eqnarray*}
&& u_i(x)-u(O)\leq \bar p_i x +o(x),  
\end{eqnarray*}
thus
\begin{eqnarray*}
&& u_i(x)-u(O) - ( r_i - \varrho )x \leq (\bar p_i - ( r_i - \varrho )) x +o(x).  
\end{eqnarray*}
Since $\bar p_i - (r_i - \varrho) <0$, we obtain that
the function $x\in \Gamma \mapsto u(x)-\phi(x)$, where $\phi\in C^1(\Gamma)$ is defined by
\begin{eqnarray*}
&& \phi_i(x)= u(O) + (r_i - \varrho) x, \quad x \in \bar{J}_i,
\end{eqnarray*}
has a local maximum at $x=O$.

Then, recalling~\eqref{ineq053}, the Kirchhoff condition at $x=O$ is activated in the viscosity inequality of Lemma~\ref{equiv-def}, from which
\begin{equation}\label{hola}
\sum_{i} -(r_i - \rho) = \sum_{i} -r_i +N \rho \leq B. 
\end{equation}

On the other hand, by~\eqref{comp-nl234} we have $-\infty \leq -\I_i v(O) \leq -\I_i v(O) < +\infty$, hence 
\begin{eqnarray}\label{ineq054}
&& G_i(v, r_i, O)\leq  G_i(u, r_i, O) -\lambda M = U_i(r_i) -\lambda  \frac{M}{2} <0  \quad \text{for all $i$.}
\end{eqnarray}

Since $r_i < \underline{q}_i$, arguing as above, we have that
the function $x\in \Gamma \mapsto v(x)-\psi(x)$, where $\psi\in C^1(\Gamma)$ is defined by
\begin{eqnarray*}
&& \psi_i(x)= v(O) + r_i x, \quad x \in \bar{J}_i,
\end{eqnarray*}
has a local minimum at $x=O$.

From~\eqref{ineq054}, the viscosity inequality of Lemma~\ref{equiv-def} at $x=O$ for supersolution
implies again that the  Kirchhoff condition holds, that is
\begin{equation*}
\sum_i -r_i \geq B,
\end{equation*}
which contradicts~\eqref{hola}. This concludes Case 2 and the proof of the comparison.
\end{proof}

\section{Other classes of boundary conditions.}
\label{secbc}

In this section we briefly discuss some possible extensions of our results for other boundary conditions on the exterior vertices $\{ \ver_i \}_{1 \leq i \leq N}$. 

The key point here is that the arguments around the junction point $O$ can be isolated from the boundary conditions imposed on the exterior vertices $\{ \ver_i \}_{1 \leq i \leq N}$. For instance, Lipschitz regularity of subsolutions  and/or local barriers around at the junction point $O$ can be performed in the same way by localizing the analysis on the set $\Gamma^\delta$ in~\eqref{gamma-d}. Analogously, the local analysis on each of these boundary points $\{ \ver_i \}_{1 \leq i \leq N}$ when other type of boundary conditions are imposed is not substantially modified by the Kirchhoff condition on the junction point $O$. This remark is valid both for the arguments presented in Sections~\ref{sec:exis-junct} and~\eqref{secuniqueness}.

Since a precise presentation of the result would very demanding and would enlarge too much the manuscript, we provide the main ideas of other boundary conditions, referring to the articles where the main ideas to address the problems can be found.

%

\medskip

\noindent
\textsl{1.- Unbounded edges.} It is possible to assume that for some $1\leq i \leq N$, $a_i = + \infty$, and conclude the existence and uniqueness of a bounded solution for the problem, provided the datas are bounded, that is, $\sup_{x \in \bar E_i} |H_i(x, 0)| < +\infty$, following the approach in~\cite[Theorem 3]{bi08}.

\medskip

\noindent
\textsl{2.- Exterior  data.} For a given subset $I\subset \{1,\cdots , N\}$ and $i\in I$, let
$0 < A_i \leq +\infty$ with $a_i < A_i$, $E_i^{out} = \{ x e_i : x \in [a_i, A_i] \}$ where $e_i:=\ver_i/a_i$, and
$g_i \in C_b(E_i^{out})$ (the ``exterior datas'').
Then, we define a new nonlocal operator with the formula~\eqref{def-I}
with an extended domain of integration,
by replacing $J_i$ with 
with  $J_i^* = (0, A_i)$ for $i\in I$. We complemented~\eqref{eq}-\eqref{Kirchhoff} with~\eqref{dirichlet-bc}
for $i\in  \{1,\cdots , N\}\setminus I$ and
the exterior condition 
$$
u(x) = g_i(x), \quad x \in E_i^c, i\in I.
$$
In this case, the arguments in~\cite[Theorem 2.1]{topp14} can be adapted to conclude the existence and uniqueness of a viscosity solution.

\medskip

\noindent
\textsl{3.- State-constraint conditions.} Instead of~\eqref{dirichlet-bc}, we can impose the state-constraint boundary condition
$$
\lambda u( \ver_i) - \I_i u( \ver_i) + H_i( \ver_i, u_{x_i}( \ver_i)) \geq 0,
$$
where the inequality is understood in the viscosity sense:
for each $\phi \in C^1(\Gamma)$ such that $u - \phi$ has a minimum point constrained to $\bar{E_i}$
 at $\ver_i$, then $G_i^\delta (u,\phi, \phi_{x_i}(\ver_i), \ver_i)\geq 0$
(see~\eqref{prolong-derivative-bd} and Definition~\eqref{defi-visco} for the notations). Here, it is possible to follow the arguments discussed in~\cite[Proposition 4.1]{bt16b} to provide a strong comparison principle for problems with this type of boundary conditions.

\medskip

\noindent
\textsl{4.- Neumann boundary conditions.} Instead of~\eqref{dirichlet-bc}, we can impose the Neumann boundary condition
$$
u_{x_i}(\ver_i) = 0,
$$
where the boundary condition is understood in the generalized (viscosity) sense, see~\cite{cil92}. The current setting fits into the assumptions considered by Ghilli~\cite[Theorem 3.1]{ghilli17}, to conclude the well-posedness of the problem.

\section{The problem on a network.}
\label{sec:gene-net}

\subsection{Problem set-up.}\label{secnet0}
In this section we address the problem on a general connected network, that is, in the presence of more than one junction point. Most of the basic definitions and notations were already presented in the introduction, and we complement it here for sake of clarity.


We consider a nonempty, finite set of vertices $\VV \subset \R^d$ and a finite set of edges $E\in \EE$, which are straight lines joining a pair of vertices.
More specifically, for each edge $E \in \EE$, there exists exactly two vertices $\ver_1, \ver_2 \in \VV$
such that $\bar E\cap \VV=\{\ver_1 , \ver_2\}$ and
\begin{eqnarray}\label{defE}
E = \{ \ver_1 + t \frac{\ver_2 - \ver_1}{|\ver_2 - \ver_1|} : t \in J_E \},
\end{eqnarray}
where $J_E = (0, a_E) \subset \R$, $a_E = |\ver_1 - \ver_2|$ is the length of the edge $E$
and $\bar E$ for the closure of the subset $E$ in $\R^d$. We assume that, for any $E_1,E_2 \in \EE$, if $E_1\neq E_2$, then
$E_1\cap E_2=\emptyset$; this means that, if $E_1,E_2$ have a common point, this is necessarely a vertex.

A network $\Gamma \subset \R^d$ is the set 
$$
\Gamma = \bigcup_{E \in \EE} \bar E.
$$


We parametrize the network $\Gamma$ by arc length in the following way (Notice that it is not exclusive).
We label the vertices, namely $\VV = \{ \ver_i \}_{i = 1}^{\mathrm{card}(\VV)}$.
Therefore,
for each $E \in \EE$, there exist a unique pair of indices $i, j$ with $i < j$ such that 
$$
\bar E = \{ \ver_i + t \frac{\ver_j - \ver_i}{a_E} : t \in \bar J_E \},
$$
and we denote $\gamma_E(t) = \ver_i + t \frac{\ver_j - \ver_i}{a_E}$, the parametrization of the (closure) of the edge $E$.

%

%
%


We treat the case of connected networks, that is, for each pair of vertices $\ver_1, \ver_2 \in \VV$,
there exists a finite set of edges $\{ E_i \}_{i=1}^k$ connecting them, i.e., such that $\bar E_1$ contains $\ver_1$, $\bar E_k$ contains $\ver_2$, and $\bar E_i\cap \bar E_{i+1}\neq \emptyset$. Therefore $\alpha := \bar E_1 \cup ... \cup \bar E_k$ is connected with respect to the usual topology in $\R^d$. Such an $\alpha$ is called a path between $\ver_1$ and $\ver_2$. 


Given two points $x, y \in \Gamma$, the geodesic distance $\rho(x,y)$ between $x$ and $y$ is defined by
$$
\rho(x,y) = \inf \{\mathrm{length}(\alpha) : \alpha \ \mbox{is a path between } x \mbox{ and }  y \}.
$$

For each $\ver \in \VV$,
we denote by $\mathrm{Inc}(\ver)$ the set of the {\it incident} edges to $\ver$, that is
the edges $E \in \EE$ such that $\gamma_E(0) = \ver$ or $\gamma_E(a_E) = \ver$.
The vertices are divided according to their nature in terms of its role in the equation.
We write $\VV=\VV_i\cup \VV_b$, where $\ver\in \VV_i$ are like junction points (where we will impose
a Kirchhoff condition), and $\ver\in\VV_b$
are like boundary vertices (where we will impose Dirichlet boundary conditions, or possibly,
another boundary condition, see Section~\ref{secbc}).

\medskip

We define the function spaces as in Section~\ref{sec:fct-space} with straightforward adaptations.
In particular, given $E \in \EE$, for each $x \in \bar E$ we denote $x_E \in \bar J_E$ such that $\gamma_E(x_E) = x$. In this setting, for $u: \Gamma \to \R$, we denote $u_E: \bar J_E \to \R$ as $u_E(t) = u(\gamma_E(t))$, which can be  equivalently written as $u(x) = u_E(x_E)$. When $u \in C^1(\Gamma)$, we write
\begin{equation*}
u_x(x) = u_E'(x_E) \qquad \text{for  $x \in E$, $E \in \EE$,}
\end{equation*}
and we extend the derivative at the vertices by setting, if $E\in \mathrm{Inc}(\ver)$,
\begin{equation*}
u_{x_E}(\ver) = \lim_{x\to \ver, x\in E}u_x(x). 
\end{equation*}
Then we define the inward derivative of $u$ at $\ver$ with respect to $E$ as
\begin{equation}\label{innerdervertex}
\partial_E u(\ver) = \lim_{x \to \ver, x \in E} \frac{u(x) - u(\ver)}{\rho(x, \ver)}. 
\end{equation}
Be careful, $\partial_E u(\ver)$ and $u_{x_E}(\ver)$ do not always coincide
as it was the case for junctions when $\ver =O$. More precisely,
defining the {\em index of incidence} of $\ver$ with respect to $E \in \mathrm{Inc}(\ver)$ as
\begin{equation*}
i_E(\ver) = \left \{ \begin{array}{ll} +1 \quad & \mbox{if} \ \ver = \gamma_E(0), \\
-1 \quad &  \mbox{if} \ \ver = \gamma_E(a_E), \end{array} \right .
\end{equation*}
we have
\begin{equation}\label{indinc}
  u_{x_E}(\ver) = i_E(\ver)\partial_E u(\ver).
\end{equation}


\medskip

Next, we introduce the terms involved in~\eqref{edp-nl} and the assumptions of each one. These conditions, together with the previous definitions and assumptions, are going to be the standing ones throughout this section.

We start with the Hamiltonian $H$. We intend it as a collection $\{ H_E \}_{E \in \EE}$ with $H_E \in C(\bar E \times \R)$,
satisfying~\eqref{H}, which reads here,
\begin{equation}\label{HE}
\left \{ \begin{array}{l}
C_H^{-1} |p| - C_H \leq H_E(x, p) \leq C_H(1 + |p|), \\
|H_E(x, p) - H_E(y, q)| \leq C_H \Big{(}(1 + |p| \wedge |q|)|x  - y| + |p - q| \Big{)},
\end{array} \right .
\end{equation}
for all $x, y \in \bar E, E \in \EE, p, q \in \R.$

\smallskip

For $u\in SC(\Gamma)$, $E \in \EE$ and $x \in \bar E$, we set
\begin{equation}\label{Inet}
\I_E u(x) = \sum_{F \in \EE} \int_{J_{F}} [u_{F}(z)- u(x)] \nu_{E F}(x, \rho(x,\gamma_F(z)))dz.
\end{equation}

The kernels $\nu_{EF}$ satisfy condition~\eqref{hyp-nu}, that is there exist $\Lambda > 0$ and $\sigma \in (0,1)$ such that
\begin{equation}\label{IE}
\left \{ \begin{array}{l} 0 \leq \nu_{EF}(x, t) \leq \Lambda t^{-(1 + \sigma)}, \\
|\nu_{EF}(x,t) - \nu_{EF}(y, t)| \leq \Lambda |x - y| t^{-(1 + \sigma)}, \end{array} \right .
\end{equation}
for each $E, F \in \EE$, $x, y \in \bar E$, and $t > 0$.

We then fix  some $B_v \in \R$ for each  $v \in \VV_i$ and $h_v \in \R$ for each $v \in \VV_b$
to state the Kirchhoff and Dirichlet boundary conditions in~\eqref{edp-nl}.


Before defining the solutions of~\eqref{edp-nl}, we extend
  the notations of the beginning to Section~\ref{sec:def-visco} to the network case.

Given $\varphi\in SC(\Gamma)$,  $E \in \EE$, $A \subseteq \Gamma$ measurable, and $x \in \bar E$, we write
\begin{eqnarray*}
  && \I_E  [A] \varphi(x)  := \sum_{F \in \EE} \int_{\{ z \in F \cap A \}} [\varphi(z) - \varphi(x)]
  \nu_{EF}(x, \rho(x,z)) dz,
\end{eqnarray*}
which is the natural extension of the nonlocal operator defined in\eqref{def-I}-\eqref{defIij}-\eqref{Iiu}. 

Then, for viscosity evaluation, given $u\in SC(\Gamma)$, $\varphi\in  C^1(\Gamma)$, $\delta > 0$, $E \in \EE$, $x \in \bar E$ and $p \in \R$, we write
\begin{align*}
G_{E}^{\delta}(u,\varphi,p,x) := \lambda u(x) - \I_E [B_\delta^c (x)] u(x) - \I_E  [B_\delta (x)] \varphi(x) + H_E(x, p).
\end{align*}



\begin{defi}\label{defi-visco-net}
We say that $u \in USC(\Gamma)$  is a viscosity subsolution to Problem~\eqref{edp-nl} if, for each $x \in \Gamma$, each $\delta > 0$ and each $\varphi \in C^1(\Gamma)$ such that $x$ is a maximum point of $u - \varphi$ in $B_\delta(x)$,  we have
\begin{eqnarray*}
G_E^{\delta}(u,\varphi, \varphi_x(x), x)\leq 0  &\mbox{if} & x \in E, 
\\
\min \Big{\{} \min_{E \in \mathrm{Inc}(x)} G_E^{\delta}(u,\varphi, \varphi_{x_E}(x),x) , \ \sum_{E \in \mathrm{Inc}(x)} -\partial_E \varphi(x)  - B_x \Big{\}} \leq 0
& \mbox{if} & x \in \VV_i,
\\
  \min \Big{\{} \min_{E \in \mathrm{Inc}(x)} G_{E}^{\delta}(u,\varphi, \varphi_{x_E}(x),x), \ u(x)-h_x \Big{\}} \leq 0
& \mbox{if} & x \in \VV_b.
\end{eqnarray*}

We say that $u \in LSC(\Gamma)$  is a viscosity supersolution to Problem~\eqref{edp-nl}
if, for each $x \in \Gamma$, each $\delta > 0$ and each $\varphi \in C^1(\Gamma)$ such that $x$ is a minimum point of $u - \varphi$ in $B_\delta(x)$, we have
\begin{eqnarray*}
G_E^{\delta}(u,\varphi, \varphi_x(x), x)\geq 0  &\mbox{if} & x \in E, 
\\
\max \Big{\{} \max_{E \in \mathrm{Inc}(x)} G_E^{\delta}(u,\varphi, \varphi_{x_E}(x),x) , \ \sum_{E \in \mathrm{Inc}(x)} -\partial_E\varphi(x)  - B_x \Big{\}} \geq 0
& \mbox{if} & x \in \VV_i,
\\
  \max \Big{\{} \max_{E \in \mathrm{Inc}(x)} G_{E}^{\delta}(u,\varphi, \varphi_{x_E}(x),x)  , \ u(x)-h_x \Big{\}} \geq 0
& \mbox{if} & x \in \VV_b.
\end{eqnarray*}

A viscosity solution is a continuous function on $\Gamma$ which is simultaneously a viscosity sub and supersolution in the sense above. 
\end{defi}

\begin{rema}
The definition depends on the parametrization we choose at the beginning,
which is close to what is done in Siconolfi~\cite{siconolfi22}.
Notice that, even in the case of a junction,  the parametrization
is not canonical, see for instance~\cite{im17} and~\cite{ls17}.
\end{rema}





Finally, since we also use a vanishing viscosity approach for the existence in networks, for functions  $u \in C^2(\Gamma)$, we define $u_{xx}(x) := u_E''(x_E)$ for $x \in E, E \in \EE$. For the second-derivative at a vertex $\ver \in \VV$, we denote
$$
u_{x_E x_E}(\ver) = \lim_{x \to \ver, x \in E} u_{xx}(x). 
$$

%

\subsection{Well-possedness on a general network.} We have the following result, analogous to
Proposition~\ref{prop1}.

\begin{prop}\label{prop-existence-net}
Let $\lambda > 0$, $\{ H_E \}_{E \in \EE}$ satisfies~\eqref{HE}, $\{ \I_E\}_{E \in \EE}$ satisfies~\eqref{IE}, and let  $\{ h_{\ver} \}_{\ver \in \VV_b}, \{ \theta_{\ver} \}_{\ver \in \VV_i} \subset \R$ be given.
Then, for each $\epsilon > 0$, there exists a unique viscosity solution $u^\epsilon \in C(\Gamma)$ to the problem
\begin{equation}\label{eq-net-eps}
\lambda u - \epsilon u_{xx} - \I_E u(x) + H_E(x, u_x) = 0 \quad \mbox{on} \ E, \ E \in \EE,
\end{equation}
complemented by Dirichlet conditions 
\begin{equation}\label{bc-net}
u(\ver)= h_\ver \text{ for $\ver\in  \VV_{b}$}, \qquad
u(\ver) =\theta_{\ver} \text{ for $\ver\in  \VV_{i}$}.
\end{equation}

The solution enjoys analogous estimates as the ones presented in Proposition~\ref{prop1}.
\end{prop}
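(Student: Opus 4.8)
The plan is to reproduce, with purely notational changes, the two-step argument used for the junction: first solve a regularized problem (bounded kernels) by a Picard iteration over the edges, then pass to the limit in the regularization. The crucial observation is that, for the \emph{viscous} Dirichlet problem~\eqref{eq-net-eps}-\eqref{bc-net}, the presence of several interior vertices creates no genuinely new difficulty, because each vertex carries a Dirichlet datum (one of the $h_\ver$ or $\theta_{\ver}$) and the equation is uniformly elliptic on every edge; connectedness of $\Gamma$ is \emph{not} used here.

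First I would truncate the kernels, $\nu_{EF}^\eta(x,r) = \min\{\eta^{-1},\nu_{EF}(x,r)\}$ for $\eta\in(0,1)$: these are bounded measures satisfying~\eqref{IE} with the same $\Lambda,\sigma$ uniformly in $\eta$, and the associated operators $\I_E^\eta$ converge uniformly to $\I_E$ on $C^{0,\gamma}(\Gamma)$, $\gamma>\sigma$, as in the paragraph preceding~\eqref{eqaprox}. For fixed $\epsilon,\eta$ I construct a solution $u^{\epsilon,\eta}\in C(\Gamma)$ of~\eqref{eq-net-eps}-\eqref{bc-net} (with $\I_E$ replaced by $\I_E^\eta$) as the uniform limit of a sequence $\{u^k\}$: setting $\lambda_{EF}^\eta(x)=\int_{J_F}\nu_{EF}^\eta(x,\rho(x,\gamma_F(z)))\,dz$, $\Lambda_E^\eta(x)=\lambda+\sum_{F\neq E}\lambda_{EF}^\eta(x)\ge\lambda>0$, one initializes $u^0$ with affine functions on each edge matching the prescribed endpoint values, and given $u^k$ one defines $u^{k+1}_E$ on $\bar J_E$ as the unique viscosity solution (Theorem~\ref{teo-censA}) of the censored Dirichlet problem
\begin{multline*}
\Lambda_E^\eta(x)\,u-\epsilon u_{xx}-\I_{EE}^\eta u(x)+H_E(x,u_x)\\
=\sum_{F\neq E}\int_{J_F}u_F^k(z)\,\nu_{EF}^\eta\big(x,\rho(x,\gamma_F(z))\big)\,dz\qquad\text{in }J_E,
\end{multline*}
with Dirichlet data $h_\ver$ or $\theta_{\ver}$ at the two endpoints of $E$. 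Comparison with constant sub/supersolutions gives $\|u^k\|_{L^\infty(\Gamma)}\le C_0$ uniformly in $k,\epsilon,\eta$, with $C_0$ the obvious analogue of~\eqref{defC0} involving $\max_{\ver}(|h_\ver|\vee|\theta_{\ver}|)$; and exactly as in the proof of~\eqref{Cauchy2}, using the Lipschitz bound on $H_E$ from~\eqref{HE} together with comparison, $\|u^{k+1}_E-u^k_E\|_{L^\infty}\le\lambda^*\|u^k-u^{k-1}\|_{L^\infty(\Gamma)}$ with $\lambda^*:=\max_E\max_{x\in\bar E}\big(\Lambda_E^\eta(x)-\lambda\big)/\Lambda_E^\eta(x)\in(0,1)$ (if $E$ is censored to $\bar E$ then $u^{k+1}_E=u^k_E$ for $k\ge1$). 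Hence $u^k\to u^{\epsilon,\eta}$ uniformly and, by stability of viscosity solutions, $u^{\epsilon,\eta}$ solves the regularized problem.

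Next I would establish, uniformly in $\eta$, the analogues of all the estimates in Proposition~\ref{prop1}: local barriers $\theta_{\ver}+\psi_{L,\delta}$ and $h_\ver+\psi_{L,\delta}$ from above and from below near each vertex $\ver$, built exactly as in Lemma~\ref{sous-sur-sol} but localized on a small geodesic ball $B_\delta(\ver)$ (the lower ones depending on $\epsilon$); interior Lipschitz estimates away from the vertices, coming from the coercivity in~\eqref{HE} as in Steps~1--2 of the proof of Proposition~\ref{prop1}(iii), which are insensitive to $\epsilon$ and $\eta$; hence a global Lipschitz bound depending on $\epsilon$; and finally $u^{\epsilon,\eta}_E\in C^{2,1-\sigma}(\bar J_E)$ by the elliptic bootstrap of Proposition~\ref{prop1}(iv), using Lemma~\ref{regI} and interior $W^{2,p}$/Sobolev estimates edge by edge. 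All these bounds are independent of $\eta$, so Ascoli's theorem and stability produce a solution $u^\epsilon$ of~\eqref{eq-net-eps}-\eqref{bc-net} in the limit $\eta\to0$, inheriting the same estimates; continuous dependence of $u^\epsilon$ in $C^{2,\alpha}(\Gamma)$, $\alpha<1-\sigma$, with respect to the family $\{\theta_{\ver}\}_{\ver\in\VV_i}$ (needed later for the Poincaré--Miranda argument) then follows as in Proposition~\ref{prop1}(v) from the uniform $C^{2,1-\sigma}$ bounds and uniqueness. Uniqueness of $u^\epsilon$ itself follows from a comparison principle for~\eqref{eq-net-eps}-\eqref{bc-net}, obtained by the standard doubling-of-variables argument (the nonlocal coupling handled as in Lemma~\ref{comp-censA}): since $\epsilon>0$ there is genuine ellipticity on every edge and every vertex carries a Dirichlet condition, so no junction-type obstruction appears.

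The main difficulty is essentially bookkeeping rather than conceptual. One must verify that the Picard iteration contracts uniformly over the whole finite family $\EE$ of edges---in particular that $\lambda^*<1$ strictly, which relies on the zeroth-order term $\lambda$ being present in every coefficient $\Lambda_E^\eta$---and that the barrier, Lipschitz and $C^{2,1-\sigma}$ estimates can be carried out vertex-by-vertex and edge-by-edge with constants independent of the kernel truncation $\eta$, so that the passage $\eta\to0$ is legitimate. The geometry of a general network (the geodesic distance $\rho$ being an infimum over paths, several edges incident to a vertex) enters only through the Lévy integrability~\eqref{IE} and the finiteness of $\EE$ and $\VV$, which are exactly what is needed to reproduce all the nonlocal estimates of Section~\ref{sec:exis-junct} in this setting.
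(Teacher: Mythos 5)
Your proposal is correct and coincides with the paper's approach: the paper itself only observes that, since every edge carries Dirichlet data at both endpoints and the construction of Lemma~\ref{lematildeueps} rests on the one-interval Dirichlet problem of Appendix~\ref{sec:comp-censor}, the proof of Proposition~\ref{prop-existence-net} follows the lines of Lemma~\ref{lematildeueps} and Proposition~\ref{prop1} with minor changes, and uniqueness comes from Lemma~\ref{comp-censA} via Remark~\ref{rmk-comparison}. Your write-up simply fills in the same kernel truncation, edge-by-edge Picard contraction, uniform barriers/Lipschitz/$C^{2,1-\sigma}$ estimates and passage $\eta\to 0$ that the paper delegates to those earlier results.
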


We stress on the fact that the construction of our solution on a junction in Lemma~\ref{lematildeueps}
is based on the well-posedness of the PDE in an interval with Dirichlet condition at the two end points
(see Appendix~\ref{sec:comp-censor}).
It follows that the proof of Proposition~\ref{prop-existence-net} follow
same lines as the proof of  Lemma~\ref{lematildeueps} and
Proposition~\ref{prop1} with minor changes.

The main result of this section is the following

\begin{teo}\label{teo-existence-net}
Assume $\lambda >0, \{ H_E \}_{E \in \EE}$ satisfies~\eqref{HE}, $\{ \I_E \}_{E \in \EE}$ satisfies~\eqref{IE}, $\{ h_{\ver} \}_{\ver \in  \VV_b}, \{ B_{\ver} \}_{\ver \in \VV_i} \subset \R$  are given.

Then, there exists a unique viscosity solution $u \in C(\Gamma)$ for the general Kirchhoff problem~\eqref{edp-nl} on $\Gamma$.
This solution is Lipschitz continuous in $\displaystyle\Gamma\setminus \bigcup_{\ver\in \VV_b} B(\ver ,\delta)$
for all $\delta >0$.
\end{teo}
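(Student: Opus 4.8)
The plan is to mimic the strategy developed in detail for a single junction (Theorem~\ref{teoexistence} and Theorem~\ref{teo1}), the only new ingredient being that the ``shooting'' argument used at one vertex must now be carried out simultaneously at all interior vertices $\ver\in\VV_i$, which replaces the Intermediate Value Theorem by the Poincaré--Miranda Theorem. More precisely, existence is obtained by vanishing viscosity: for $\epsilon\in(0,1)$ and a vector $\theta=(\theta_\ver)_{\ver\in\VV_i}\in\R^{\mathrm{card}(\VV_i)}$, Proposition~\ref{prop-existence-net} produces a unique solution $u^{\epsilon,\theta}\in C(\Gamma)$ of the viscous Dirichlet problem~\eqref{eq-net-eps}--\eqref{bc-net}, together with the analogue of the estimates of Proposition~\ref{prop1}; in particular the $L^\infty$ bound and the local $C^{2,1-\sigma}$ regularity are uniform in $\epsilon$, and the map $\theta\mapsto u^{\epsilon,\theta}$ is continuous into $C^{2,\alpha}_{\rm loc}$ for $\alpha<1-\sigma$. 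First I would define, for each $\ver\in\VV_i$, the Kirchhoff defect $F_\ver^\epsilon(\theta)=\sum_{E\in\mathrm{Inc}(\ver)}-\partial_E u^{\epsilon,\theta}(\ver)-B_\ver$, so that $F^\epsilon=(F^\epsilon_\ver)_\ver\colon\R^{\mathrm{card}(\VV_i)}\to\R^{\mathrm{card}(\VV_i)}$ is continuous.

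The key monotonicity step is to show that increasing $\theta_\ver$ (keeping the other components fixed) forces $\partial_E u^{\epsilon,\theta}(\ver)$ down for the edges $E\in\mathrm{Inc}(\ver)$, hence raises $F_\ver^\epsilon$, while the off-diagonal dependence is controlled. Exactly as in Step~1 of the proof of Theorem~\ref{teoexistence}, given a ``target slope'' vector one builds a global affine-in-each-edge supersolution $U$ with $U(\ver)=\theta_\ver^+$ for $\theta_\ver^+$ large enough (using~\eqref{HE} and the bound on $\I_E U$ obtained as in~\eqref{GiU}), and comparison with $u^{\epsilon,\theta^+}$ via the comparison principle of Appendix~\ref{sec:comp-censor} forces $\partial_E u^{\epsilon,\theta^+}(\ver)$ to be small; symmetrically a global subsolution gives the opposite bound for $\theta^-$ very negative. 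Choosing, for each interior vertex separately, $\theta_\ver$ ranging over $[\theta^-,\theta^+]$ while the other components are held in that box, one checks the Poincaré--Miranda sign condition on the boundary of the cube $[\theta^-,\theta^+]^{\mathrm{card}(\VV_i)}$: on the face $\theta_\ver=\theta^+$ one has $F^\epsilon_\ver\ge 0$ and on $\theta_\ver=\theta^-$ one has $F^\epsilon_\ver\le 0$, uniformly with respect to the remaining coordinates. Poincaré--Miranda then yields $\theta_\epsilon$ in the cube with $F^\epsilon(\theta_\epsilon)=0$, i.e.\ the classical Kirchhoff condition holds at every interior vertex for $u^\epsilon:=u^{\epsilon,\theta_\epsilon}$, and $\theta_\epsilon$ stays in a fixed compact set independent of $\epsilon$.

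Next I would prove the uniform (in $\epsilon$) Lipschitz bound for $u^\epsilon$ on $\Gamma\setminus\bigcup_{\ver\in\VV_b}B(\ver,\delta)$. This is the adaptation of Step~2 of Theorem~\ref{teoexistence}: Steps~1--2 of Proposition~\ref{prop1}(iii) (pure coercivity of $H_E$, no $\epsilon$) give interior Lipschitz estimates and localize the sup of $u^\epsilon(\cdot)-u^\epsilon(x_0)-\varphi(\cdot)$ to a vertex; at an interior vertex $\ver$ one uses the test function which is $K\rho(\cdot,x_0)$ on the edge containing $x_0$ and $K\rho(\ver,x_0)+L\rho(\cdot,\ver)$ on the other incident edges, and the classical Kirchhoff identity $\sum_{E\in\mathrm{Inc}(\ver)}-\partial_E\varphi(\ver)\le B_\ver$ becomes $K-(\mathrm{card}(\mathrm{Inc}(\ver))-1)L\le B_\ver$, impossible for $K$ large; at a boundary vertex $\ver\in\VV_b$ the choice $K,L\ge 4\|u^\epsilon\|_\infty/\delta$ keeps the maximum point inside $\Gamma\setminus\bigcup B(\ver,\delta)$. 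Hence the maximum is $0$ and $u^\epsilon$ is Lipschitz away from $\VV_b$, with constant depending on $\delta$ but not on $\epsilon$. Then Ascoli and a diagonal extraction give a subsequence $u^\epsilon\to u$ locally uniformly on $\Gamma\setminus\VV_b$; by the Hölder estimate of~\cite{bklt15} (coercivity~\eqref{HE}) $u$ extends to $C^{0,\gamma}(\Gamma)$ for every $\gamma<1$, and passing to the limit in the viscosity inequalities exactly as in Step~3 of Theorem~\ref{teoexistence} shows $u$ solves~\eqref{eq-net-eps} with $\epsilon=0$ in each edge, satisfies the generalized Kirchhoff condition at each $\ver\in\VV_i$ (using the classical Kirchhoff condition for $u^\epsilon$ when $x_\epsilon=\ver$, and the interior equation otherwise) and the generalized Dirichlet condition at each $\ver\in\VV_b$ (the upper barrier is $\epsilon$-independent; the supersolution side uses the same Lebesgue-theorem argument on $\I_E$ that exploits $\sigma<1$). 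Uniqueness then follows from a comparison theorem whose proof is the vertex-by-vertex repetition of Theorem~\ref{teo1}: Lemma~\ref{sous-sol-lip} gives Lipschitz regularity of the subsolution near each interior vertex, Lemma~\ref{inegGi} supplies the one-sided viscosity inequalities in terms of the critical slopes $\bar p_E,\underline p_E,\bar q_E,\underline q_E$ at each $\ver\in\VV_i$, and one runs Case~1 / Case~2 at every interior vertex and the boundary analysis of~\cite{topp14,soner86a} at every $\ver\in\VV_b$; since the vertices are finitely many and the maximum of $u-v$ is attained at one of them (or in an open edge, the standard case), one reaches a contradiction.

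The main obstacle is the simultaneous shooting argument: one must verify that the Poincaré--Miranda sign condition is robust, i.e.\ that fixing $\theta_\ver=\theta^\pm$ forces the sign of $F_\ver^\epsilon$ \emph{uniformly} over all admissible values of the other components $\theta_{\ver'}$, $\ver'\neq\ver$. This is where the global affine sub/supersolutions of Step~1 of Theorem~\ref{teoexistence} must be chosen with slopes and heights that dominate the nonlocal coupling across \emph{all} edges at once; the point is that the bound on $|\I_E U(x)|$ is linear in the slope vector with a constant depending only on $\Lambda,\sigma,N$ and the edge lengths, so a single large $\theta^+$ (resp.\ very negative $\theta^-$) works at every interior vertex simultaneously, and the comparison principle of Appendix~\ref{sec:comp-censor} then yields the required one-sided slope bounds independently of the other $\theta_{\ver'}$. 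Once this uniformity is in place, everything else is a faithful, if lengthy, transcription of the single-junction arguments.
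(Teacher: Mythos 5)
Your overall architecture is exactly the paper's: vanishing viscosity, Dirichlet data $\Theta=(\theta_\ver)_{\ver\in\VV_i}$ at the interior vertices via Proposition~\ref{prop-existence-net}, a simultaneous shooting argument closed by the Poincar\'e--Miranda theorem, the uniform Lipschitz estimate away from $\VV_b$ via the two-slope test function and the classical Kirchhoff identity, passage to the limit as in Step~3 of Theorem~\ref{teoexistence}, and a comparison proof that localizes at a vertex and repeats Theorem~\ref{teo1}. You also correctly identify the crux, namely that the sign of $F^\epsilon_\ver$ on the face $\theta_\ver=\theta^\pm$ must be uniform in the remaining coordinates.

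There is, however, a genuine gap in the mechanism you propose for that crux. You want to reuse the ``affine-in-each-edge'' supersolution $U_i(x)=\theta^++p_ix_i$ of Step~1 of Theorem~\ref{teoexistence}. On a general network an edge $E$ may join \emph{two} interior vertices $\ver,\ver'$, and an affine function on $E$ taking the value $\theta^+$ at both endpoints is constant there, hence has inward derivative $0$ at both ends; more generally, if it has inward derivative $p$ at $\ver$ it has inward derivative $-p$ at $\ver'$, so you cannot make both inward derivatives strictly negative. Consequently the comparison argument only yields $\partial_E u^{\epsilon,\Theta^{j,+}}(\ver_j)\le 0$ on such edges, giving $F_j(\Theta^{j,+})\ge -B_{\ver_j}$, which fails to have the required sign when $B_{\ver_j}>0$; trying instead to prescribe nonzero slopes $p_E$ leads to a linear (flow-type) system on the graph that need not be solvable compatibly with the vertex values. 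The paper avoids this by replacing the affine profile with a fixed symmetric $C^2$ bump $\psi_0$ on $[0,1]$, equal to $-\bar Bx$ near $0$ (with $\bar B=\max_\ver|B_\ver|+1$), constant in the middle, and rescaled edge by edge as $\Psi_E(x_E)=a_E\psi_0(a_E^{-1}x_E)$: this $\Psi$ vanishes at every vertex and has inward derivative $-\bar B$ at \emph{both} endpoints of \emph{every} edge, so $\Psi^\pm=\pm\theta^+\pm\Psi$ are global super/subsolutions for all $\Theta\in[-\theta^+,\theta^+]^{N_i}$ and the comparison at $\ver_j$ gives $\partial_Eu^{\epsilon,\Theta^{j,+}}(\ver_j)\le-\bar B$ for all incident $E$, hence $F_j(\Theta^{j,+})\ge\mathrm{card}(\mathrm{Inc}(\ver_j))\,\bar B-B_{\ver_j}>0$ uniformly in $\Theta$. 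With this barrier substituted for your affine one, the rest of your argument goes through as written.
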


\begin{proof} We follow here the ideas of Theorem~\ref{teoexistence}, adapted to the general network setting.
The main difference here is to prove the existence of a family $\Theta:=\{\theta_\ver\}_{\ver \in \VV_i}$
for which the Kirchhoff condition
\begin{eqnarray}\label{Kir-net}
 \sum_{E\in \text{Inc}(\ver)} -\partial_E u^{\epsilon,\Theta}(\ver) = B_\ver
\end{eqnarray}
holds pointwisely for the solution $u^{\epsilon,\Theta}$ of~\eqref{eq-net-eps}-\eqref{bc-net}.

We set $\bar B = \max_{\ver \in \VV_i} |B_\ver| + 1$ and define $\psi_0 \in C^2[0,1]$ such that $\psi_0(x) = \psi_0(1 - x)$ for all $x \in [0,1]$, non-increasing in $[0,1/2]$, with $\psi_0(x) = -\bar B x$ if $x \in [0, 1/8]$ and $\psi_0(x) = -3\bar B/16$ if $x \in [1/4,1/2]$. Notice this function is such that $\| \psi_0 \|_{C^2[0,1]} \leq C_0 \bar B$ for some universal constant $C_0 > 0$.

Then, we define the function $\Psi\in C^2(\Gamma)$ by
$$
\Psi_E(x_E) = a_E \psi_0(a_E^{-1} x_E), \quad\text{for $x_E\in \bar J_E$, $E\in \EE$.} 
$$

Notice that $\| \Psi \|_{C^2(\Gamma)} \leq \underline{a}^{-1} C_0 \bar B$, where $\underline{a} = \min_{E \in \EE} |a_E|$,
and, for each $\ver \in \VV$, we have $\Psi(\ver) = 0$.

With similar computations as in the proof of Theorem~\ref{teoexistence},
we obtain that,
for each $E \in \EE$ and $x \in E$,
\begin{eqnarray*}
&& |H_E(x, \Psi_x(x))| \leq C_H (1+C_0 \bar B),\\
&& |\I_E \Psi(x)| \leq \frac{2 \Lambda {\rm card}(\EE)^2 C_0 \bar B  }{1-\sigma},\\
&& |\epsilon \Psi_{xx}(x)| \leq  \underline{a}^{-1} C_0 \bar B, \quad \epsilon\in (0,1),
\end{eqnarray*}
and, for $\theta^+$ large enough in terms of the above estimates, $\bar h = \max_{\ver \in \VV_i} |h_\ver|$
and $\lambda$, the function
$$
\Psi^+(x) = \theta^+ + \Psi(x), \quad x \in \Gamma
$$
is a viscosity supersolution of~\eqref{eq-net-eps}-\eqref{bc-net}, for all $\theta_\ver \in [-\theta^+, \theta^+]$.

Now, we enumerate the set of interior vertices, writing $\VV_i = \{ \ver_1, \ver_2, ..., \ver_{N_i} \}$
(assuming for instance that the interior vertices are labelled first in the starting parametrization)
and $\Theta=  (\theta_1, ..., \theta_{N_i})$, where $N_i={\rm card}(\VV_i)\geq 1$.
For $\Theta\in  [-\theta^+, \theta^+]^{N_i}$, let  $u^{\epsilon,\Theta}\in C^{2,1-\sigma}(\Gamma)$ be the unique
solution of~\eqref{eq-net-eps}-\eqref{bc-net} given by Proposition~\ref{prop-existence-net}, and define
the functions
$$
\Theta\in  [-\theta^+, \theta^+]^{N_i} \mapsto F_j(\Theta) = \sum_{E \in \mathrm{Inc}(\ver_j)} -\partial_E u^{\epsilon,\Theta}(\ver_j) - B_{v_j},
\quad 1\leq j\leq N_i.
$$

For $\Theta\in  [-\theta^+, \theta^+]^{N_i}$, we denote $\Theta^{j,+}$ the vector $\Theta$ where
the coordinate $\theta_j$ is replaced by $\theta^+$.
By comparison (Lemma~\ref{comp-censA} and Remark~\ref{rmk-comparison}), we have
$u^{\epsilon,\Theta^{j,+}}\leq \Psi^+$ on $\Gamma$. Since $\Psi^+(\ver_j)=\theta^+= u^{\epsilon,\Theta^{j,+}}(\ver_j)$, it follows
\begin{eqnarray*}
  &&  \partial_E u^{\epsilon,\Theta^{j,+}}(\ver_j)\leq \partial_E \Psi^+(\ver_j)= \psi_0'(0)=-\bar B,
  \quad \text{for all $E\in  \mathrm{Inc}(\ver_j)$},
\end{eqnarray*}
leading to
$$
F_j(\Theta^{j,+}) = \sum_{E \in \mathrm{Inc}(\ver_j)} -\partial_E u^{\epsilon,\Theta^{j,+}}(\ver_j) - B_{\ver_j}
\geq \mathrm{card}(\mathrm{Inc}(\ver_j)) |\bar B| - B_{\ver_j} > 0,
$$
for all $j$ and all $\Theta$.

Similarly, $\Psi^-(x) = \theta^- - \Psi(x)$ with $\theta^-=-\theta^+$
is a  viscosity subsolution for the same problem, from which we conclude that
$F_j(\Theta^{j,-})<0$ for all $j$ and all $\Theta$.

We are in position to apply Poincaré-Miranda theorem (see~\cite{kulpa97})
to the family $F_j$, $1\leq j\leq N_i$, in $[-\theta^+, \theta^+]^{N_i}$.
It yields the existence of $\Theta^* \in [-\theta^+, \theta^+]^{N_i}$
such that $F_j(\Theta^*) = 0$ for all $j$.

Then, the Kirchhoff condition~\eqref{Kir-net} is satisfied for the solution
$u^{\epsilon,\Theta^*}$ of~\eqref{eq-net-eps}-\eqref{bc-net}, from which we conclude
that $u^{\epsilon,\Theta^*}$ is a solution to~\eqref{edp-nl} with the vanishing
viscosity term. By similar arguments as in the proof of Theorem~\ref{teoexistence},
we can send $\epsilon \to 0$ in the vanishing viscosity problem and obtain
a solution $u\in C(\Gamma)$ to~\eqref{edp-nl}, which is locally Lipschitz continuous in
$\Gamma\setminus\VV_b$.

The proof of uniqueness follows readily the one of Theorem~\ref{teo1}
since, arguing by contradiction, we are brought back to the case when a positive
maximum is achieved at a vertex $\ver\in \VV_i$ and we can argue locally as in
the junction case.
\end{proof}


Collecting the results in the paper, we obtain the following
well-posedness for viscous nonlocal Hamilton-Jacobi
with Kirchhoff conditions on general networks.
This result has its own interest.

\begin{teo}\label{teo-gene}
Assume hypotheses of Theorem~\ref{teo-existence-net} hold, and consider families $\{ f_E \}_{E \in \EE}, \{ \mu_E \}_{E \in \EE}$ such that 
\begin{eqnarray*}
&&
\left \{ \begin{array}{l}
\text{$0\leq \underline{\mu}\leq \mu_E\in C(\bar E)$, \quad $|\sqrt{\mu_E(x)}- \sqrt{\mu_E(y)}|\leq C_\mu|x-y|$,\quad
  $x,y\in \bar{E}$,}\\[1mm]
\text{$f_E\in  C(\bar E)$.}
\end{array} \right .
\end{eqnarray*}

Then, there exists a viscosity solution $u \in C(\Gamma)$ to the problem
\begin{eqnarray}\label{edp-nl-viscous}
&&  \left\{
  \begin{array}{ll}
\lambda u - \mu(x)u_{xx} -\I u(x) + H(x,u_x)=f(x), & x\in \Gamma\setminus \VV,\\
\displaystyle \sum_{E\in \text{Inc}(\ver)} -\partial_E u(\ver) = B_\ver, & \ver\in  \VV_{i},\\
u(\ver)= h_\ver, & \ver\in  \VV_{b},
\end{array}
  \right.
  \end{eqnarray}  
which is Lipschitz continuous in a neighborhood of each vertex $v \in \VV_i$.

Moreover, if the equation is strictly elliptic (i.e., $\underline{\mu}>0$ in \eqref{steady-plus}),
then there exists a unique classical
solution $u\in C^{2,1-\sigma}(\Gamma)$, which satisfies the conditions at the
vertices pointwisely.
\end{teo}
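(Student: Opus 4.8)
The plan is to mirror the proof of Theorem~\ref{teo-existence-net}, viewing the diffusion $\mu(x)u_{xx}$ and the source $f$ as perturbations that do not alter the structure of the argument. First I would add an artificial viscosity: for $\epsilon\in(0,1)$ put $\mu_E^\epsilon:=\mu_E+\epsilon\geq\epsilon>0$, which is uniformly elliptic and still satisfies $|\sqrt{\mu_E^\epsilon(x)}-\sqrt{\mu_E^\epsilon(y)}|\leq C_\mu|x-y|$ uniformly in $\epsilon$ (since $\sqrt{a+c}-\sqrt{b+c}\leq\sqrt a-\sqrt b$ for $a\geq b\geq 0$, $c\geq 0$). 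For fixed Dirichlet data $\Theta=\{\theta_\ver\}_{\ver\in\VV_i}$ and $\{h_\ver\}_{\ver\in\VV_b}$ I would construct the unique viscosity solution $u^{\epsilon,\Theta}$ of the Dirichlet problem obtained by replacing $\mu$ with $\mu^\epsilon$ in~\eqref{edp-nl-viscous} and prescribing the values $\theta_\ver$ at $\VV_i$, exactly as in Lemma~\ref{lematildeueps} and Proposition~\ref{prop-existence-net}. The only new input is the well-posedness of the censored one-dimensional Dirichlet problem with a variable uniformly-elliptic coefficient and a bounded continuous source; this follows from the arguments of Appendix~\ref{sec:comp-censor}, the hypothesis that $\sqrt{\mu_E}$ is Lipschitz being exactly what makes the associated comparison principle work, while the bounded term $f_E$ leaves the contraction estimate~\eqref{Cauchy2} unchanged and only adds bounded contributions to the barriers of Lemma~\ref{sous-sur-sol}. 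The bootstrap of Proposition~\ref{prop1}(iv) then gives the $C^{2,1-\sigma}$ regularity of $u^{\epsilon,\Theta}$, and all the estimates of Proposition~\ref{prop1} hold with bounds uniform for $\Theta$ in a fixed cube and for $\epsilon\in(0,1)$.

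Next I would recover the Kirchhoff conditions by Poincaré--Miranda, as in Theorem~\ref{teo-existence-net}: the auxiliary function $\Psi\in C^2(\Gamma)$ built there still works because $\mu_E^\epsilon$ and $f_E$ are bounded, so $\Psi^\pm=\theta^\pm\pm\Psi$ are global super/subsolutions of the viscous problem for $\theta^+=-\theta^-$ large enough (independently of $\epsilon$); comparing $u^{\epsilon,\Theta^{j,\pm}}$ with $\Psi^\pm$ at $\ver_j$ controls the sign of the Kirchhoff defect $F_j(\Theta)=\sum_{E\in\mathrm{Inc}(\ver_j)}-\partial_E u^{\epsilon,\Theta}(\ver_j)-B_{\ver_j}$ on opposite faces of $[-\theta^+,\theta^+]^{N_i}$, and Poincaré--Miranda furnishes $\Theta^*_\epsilon$ with $F_j(\Theta^*_\epsilon)=0$ for all $j$. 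Writing $u^\epsilon:=u^{\epsilon,\Theta^*_\epsilon}$, the coercivity of $H$ combined with the now classical Kirchhoff condition yields, as in Step~2 of Theorem~\ref{teoexistence} and in Lemma~\ref{sous-sol-lip}, a Lipschitz bound for $u^\epsilon$, uniform in $\epsilon$, on $\Gamma\setminus\bigcup_{\ver\in\VV_b}B(\ver,\delta)$ for every $\delta>0$.

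To finish the first (possibly degenerate) assertion I would pass to the limit $\epsilon\to 0$ along a subsequence by Ascoli and a diagonal extraction, using this uniform Lipschitz bound away from $\VV_b$ together with the coercivity-based Hölder estimate of~\cite{bklt15} to extend the limit continuously up to $\VV_b$; standard stability of viscosity solutions (the second-order term $\mu u_{xx}$ passes to the limit) produces a viscosity solution of~\eqref{edp-nl-viscous} which is Lipschitz near every interior vertex. When $\underline\mu>0$ no artificial viscosity is needed: one takes $\epsilon=0$ throughout, the barriers at $\VV_b$ from Lemma~\ref{sous-sur-sol} are now $\epsilon$-free and non-degenerate, so the Dirichlet and Kirchhoff conditions are attained pointwise; the bootstrap of Proposition~\ref{prop1}(iv), applied to $u_{xx}=\mu_E^{-1}(f_E+\lambda u-\I_E u-H_E(x,u_x))$ with $\mu_E^{-1}$ Lipschitz and $\I_E u\in C^{0,1-\sigma}$, gives $u\in C^{2,1-\sigma}(\Gamma)$; and uniqueness follows from the comparison principle of Theorem~\ref{teo1}, whose analysis at the interior vertices is local and unaffected by the now non-degenerate diffusion, while $\sqrt{\mu_E}$ Lipschitz allows the standard doubling-of-variables treatment of $\mu u_{xx}$ inside the edges.

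I expect the main obstacle to be the limiting passage $\epsilon\to 0$ in the degenerate case: the lower barriers degenerate (cf. Remark~\ref{loss-en-dessous}), so one may lose the Dirichlet condition at $\VV_b$ and must recover continuity up to $\VV_b$ purely from the coercivity of $H$. A secondary technical point is verifying that the one-dimensional theory of Appendix~\ref{sec:comp-censor} genuinely extends to a variable coefficient $\mu_E$ under the $\sqrt{\mu_E}$-Lipschitz hypothesis and a merely continuous source $f_E$, since everything downstream (the iteration of Lemma~\ref{lematildeueps} and the Poincaré--Miranda step) rests on that building block.
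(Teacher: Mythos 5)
Your existence argument (both the degenerate case via the viscosity $\mu_E+\epsilon$ and the strictly elliptic case with $\epsilon=0$) follows the paper's proof essentially verbatim: the paper also just replaces $-\epsilon u_{x_Ex_E}$ by $-(\epsilon+\mu_E)u_{x_Ex_E}$ and reruns Proposition~\ref{prop1}, the Poincar\'e--Miranda step and the limit $\epsilon\to0$ of Theorem~\ref{teo-existence-net}. The problem is your uniqueness step.

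You claim uniqueness ``follows from the comparison principle of Theorem~\ref{teo1}, whose analysis at the interior vertices is local and unaffected by the now non-degenerate diffusion.'' This is a genuine gap. Theorem~\ref{teo1} and its supporting Lemma~\ref{inegGi} are proved for the equation \emph{without} a second-order term, and their junction analysis does not survive the addition of a non-degenerate $\mu u_{xx}$: Lemma~\ref{inegGi}(ii) perturbs the test function by $\epsilon x\sin(\log x)$, whose second derivative blows up like $1/x$ near the vertex, so the viscosity inequality can no longer be closed; and the whole Lions--Souganidis Case~2 argument manipulates only first-order slopes $\bar p_i,\underline q_i$ at $O$, with no control on second-order jets. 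The paper itself warns that second-order junction comparison typically requires the ellipticity to \emph{degenerate} at the vertex (see the discussion of~\cite{in17,ls17} in the introduction), which is exactly the opposite of the strictly elliptic regime you are in. What the paper actually proves, and what the statement actually asserts, is uniqueness only within the class of \emph{classical} $C^{2,1-\sigma}(\Gamma)$ solutions, by a direct pointwise maximum-principle argument: if $u-v$ attains a positive maximum at $x_0\in\VV_i$, maximality gives $\partial_E u(x_0)\le\partial_E v(x_0)$ for every $E\in\mathrm{Inc}(x_0)$ while the two Kirchhoff identities force $\sum_E(\partial_E u-\partial_E v)(x_0)=0$, hence $\partial_E u(x_0)=\partial_E v(x_0)$ for all incident $E$; subtracting the two PDEs along an edge and letting $x\to x_0$ then yields $\mu(x_0)\,(u-v)_{x_Ex_E}(x_0)\ge\lambda\,\max_\Gamma(u-v)>0$ with vanishing first derivative of $u-v$ at $x_0$, so $u-v$ is strictly convex near $x_0$ with a critical point there, i.e.\ $x_0$ is a local minimum along $E$ --- contradicting maximality. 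This argument uses the strict ellipticity $\underline\mu>0$ and the $C^2$ regularity in an essential way, and it is what you should supply in place of the appeal to Theorem~\ref{teo1}. Your remark about the doubling of variables inside the edges under the $\sqrt{\mu_E}$-Lipschitz hypothesis is correct but addresses only the easy part; the vertex is the crux.
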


\begin{proof}
The first part of the Theorem is a straightforward adaptation of
Proposition~\ref{prop1} and Theorem~\ref{teoexistence} to the case
of general networks. The only difference is that we argue
on~\eqref{eqaprox0} by replacing the diffusion $-\e u_{x_E x_E}$
by  $-(\e +\mu_E) u_{x_E x_E}$ in the edges $E$, $E\in \EE$.

When the equation is strictly elliptic,
the existence of a $C^{2,1-\sigma}(\Gamma)$ solution
is directly given by Proposition~\ref{prop1}
and Steps 1-2 of the proof of Theorem~\ref{teoexistence},
arguing directly on the PDE in~\eqref{edp-nl-viscous}
without adding $\epsilon$ to the diffusion.

The proof of the uniqueness of the
classical solution is an easy adaptation of~\cite[Lemma 3.6]{adlt19}
or~\cite{cms13}. We provide it to be self-contained.

Consider two solutions $u,v\in C^{2,1-\sigma}(\Gamma)$
of~\eqref{edp-nl-viscous}. 
Assume by contradiction that $\delta:= \max_\Gamma u-v >0$.

If the maximum is achieved inside a branch at $x_0\in \EE \setminus \VV$, then
we perform a classical proof of the maximum principle. More precisely,
the PDE in~\eqref{edp-nl-viscous} holds pointwisely both for
$u$ and $v$ at $x_0$. Using that $(u-v)(x_0)=\delta >0$,
$u_x(x_0)=v_x(x_0)$, $u_{xx}(x_0)\leq v_{xx}(x_0)$ and $\I u(x_0)\leq \I v(x_0)$,
by subtracting the inequality, we reach a contradiction.

It follows that the maximum is necessarily achieved at a vertex.
Since the Dirichlet condition holds pointwisely, $x_0$ cannot be a boundary vertex,
thus $x_0\in \VV_i$.

By maximality, we have $(u-v)(x)\leq (u-v)(x_0)$ for all $x\in\Gamma$.
Hence, $\partial_E u(x_0)\leq \partial_E v(x_0)$ for all $E\in \mathrm{Inc}(x_0)$.
From the Kirchhoff conditions at $x_0$, we get
$\sum_{E\in \text{Inc}(x_0)} (\partial_E u(x_0)-\partial_E u(x_0))  = 0$.
We finally obtain
\begin{eqnarray}\label{egalKFF}
&& \partial_E u(x_0)= \partial_E v(x_0) \quad \text{ for all $E\in \mathrm{Inc}(x_0)$.}
\end{eqnarray}

Let $E\in \text{Inc}(x_0)$ and $x\in E$. Writing the PDEs at $x$ for $u$ and $v$, we infer
\begin{eqnarray*}
  && \mu(x)(u_{xx}(x)-v_{xx}(x))\\
  &=& \lambda (u-v)(x) -\I_E u(x)+\I_E v(x) + H_E(x,u_x(x))-  H_E(x,v_x(x)).
\end{eqnarray*}
Sending $x\to x_0$, we have $\lambda (u-v)(x)\to \lambda \delta >0$,
$-\I_E u(x)+\I_E v(x) \to -\I_E u(x_0)+\I_E v(x_0) \geq 0$ by Lemma~\ref{regI} and maximality
of $x_0$, and 
$$
H_E(x,u_x(x))-  H_E(x,v_x(x))\to 0
$$ 
since
\begin{eqnarray*}
   \lim_{x\to x_0, x\in E} u_x(x) = u_{x_E}(x_0)&=&   i_E( x_0)\partial_E u(x_0)\\
  &=&
  i_E(x_0)\partial_E v(x_0) = v_{x_E}(x_0) = \lim_{x\to x_0, x\in E} v_x(x)
\end{eqnarray*}
by~\eqref{indinc} and~\eqref{egalKFF}. We finally obtain that
$\mu(x_0)(u_{x_E x_E}(x_0)-v_{x_E x_E}(x_0))>0$ with $u_{x_E}(x_0)=v_{x_E}(x_0)$.
Therefore, the function $u-v$ is strictly convex in a neighborhood of $x_0$
in the edge $E$,
with a minimum at $x_0$, which contradicts the fact that $x_0$ is a
maximum of $u-v$. This ends the proof.
\end{proof}

\section{On flux limited solutions.}
\label{sec-FL}

We want to make the link between the solutions of the Kirchhoff 
problem~\eqref{eq}-\eqref{Kirchhoff} and flux limited solutions introduced
in~\cite{im17, ls17, bc24}. 
For the sake of simplicity of notations, we come back to the simpler case of a junction, see
Section~\ref{secdefjunction} for the setting and Section~\ref{sec:def-visco}
for the definition of viscosity solutions.

We start by giving a precise definition
of flux limited solutions in our context.

At first, we need to add an extra assumption on $H$.
In~\cite{im17}, they assume quasiconvexity, 
namely that all $H_i(x,\cdot)$ are nonincreasing on some interval $(-\infty, p_i^0(x)]$
and then nondecreasing on $[p_i^0(x),+\infty)$.
Here, to simplify the presentation, we follow the assumptions
of~\cite{ls17}. Throughout this section, we assume
\begin{eqnarray}\label{Hconvex}
&& \begin{array}{ll}
(i) &\text{$H$ satisfies~\eqref{H}} \\[2mm]
(ii) & \text{For all $1\leq i\leq N$, $x\in \bar{E}_i$, $p\in \R\mapsto H_i(x,p)$ is convex}\\
     &\text{with a unique minimum $p_i^0(x)$.}
\end{array}
\end{eqnarray}
Under this assumption, we can define the nondecreasing and nonincreasing parts of $H_i$, respectively,
by
\begin{eqnarray}
  &&
H_i^-(x,p)=\left\{
 \begin{array}{ll}
H_i(x, p_i^0(x)) & \text{if $p\leq p_i^0(x)$},\\[1mm]
H_i(x,p) &  \text{if $p\geq p_i^0(x)$},
 \end{array}
\right.
\\[2mm]
&& H_i^+(x,p)=\left\{
 \begin{array}{ll}
H_i(x, p) & \text{if $p\leq p_i^0(x)$},\\[1mm]
H_i(x,p_i^0(x)) &  \text{if $p\geq p_i^0(x)$}.
 \end{array}
\right.
\end{eqnarray}
Notice that
\begin{eqnarray}\label{maxHi}
H_i(x,p)=\max \{H_i^-(x,p), H_i^+(x,p)\}.
\end{eqnarray}


Following the notations~\eqref{def-G} and~\eqref{def-G0}
and recalling the definition of $\mathcal{F}_x$ introduced in~\eqref{def-F},
for every $u\in  SC(\Gamma)$, $\varphi\in C^1(\Gamma)$, $\delta\in (0,1)$,
$x\in \bar{E}_i$ and $p\in\R$,
we denote
\begin{eqnarray}\label{def-Gpm}
&& G_{i}^{\pm, \delta}(u,\varphi,p,x):=\lambda u(x) - \I_i  [B_\delta^c (x)] u(x) - \I_i  [B_\delta (x)] \varphi(x) + H^\pm_i(x, p),
\end{eqnarray}
and, for $u\in\mathcal{F}_x$,
\begin{eqnarray}\label{def-G0pm}
  && G_{i}^{\pm}(u,p,x):=\lim_{\delta\downarrow 0}  G_{i}^{\pm, \delta}(u,\varphi,p,x)
  =\lambda u(x) - \I_i u(x)+ H^\pm_i(x, p).
\end{eqnarray}

As for~\eqref{def-G0}, this latter quantity is well-defined in $[-\infty,+\infty]$
and does not depend on the function $\varphi$ chosen for computing the limit.
When we can touch an $USC$ function from above or a $LSC$ function from
below, we have more precise
estimates, see Lemma~\ref{lem-F}. When $u$ is H\"older continuous,
$\I_i u(x)$ is well-defined in $\R$ (see Lemma~\ref{equiv-def}) and so $G_{i}^{\pm}(u,p,x)$.


Next we introduce the so-called flux limiter. Here we follow the approach presented in~\cite[Section 16.4]{bc24}
but, due to the nonlocal nature
of our problem, the definition is more involved.
\begin{defi}[Flux limiter]  \label{defFL}
Let $B \in \R$ be given. For every $u\in  SC(\Gamma)$, $\varphi\in C^1(\Gamma)$ and $\delta\in (0,1)$,
\begin{eqnarray*}
{FL}^{-,\delta}(u,\varphi, O):= \min_{p_1, ..., p_N \in\R} \max \Big{\{} \max_{1\leq i\leq N} G_i^{-,\delta}(u,\varphi, p_i,O), \sum_{i=1}^N - p_i - B \Big{\}}.
\end{eqnarray*}

For  $u \in \mathcal{F}_O$, the {\em flux limiter operator} is defined by
\begin{eqnarray*}
{FL}^-(u,O)=  \min_{p_1, ..., p_N \in\R} \max \Big{\{} \max_{1\leq i\leq N} G_i^{-}(u,p_i,O), \sum_{i=1}^N - p_i - B \Big{\}} \in [-\infty,+\infty].
\end{eqnarray*}
\end{defi}
These definitions make sense. Indeed,
$\sum_{j=1}^N - p_j - B\to -\infty$ whereas,  when they are finite,
$G_i^{-,\delta}(u,\varphi, p_i,O), G_i^{-}(u,p_i,O)\to +\infty$
as $p_i\to +\infty$, thus
the minima are well-defined. See the related~\cite[Lemma 5.3.1]{bc24} for details.

We are now in position to define the  viscosity flux limited (FL) solutions.

\begin{defi}\label{defi-FLsol}
We say that $u \in USC(\Gamma)$  is a viscosity FL subsolution to problem~\eqref{eq}-\eqref{dirichlet-bc}
if, for each function $\varphi \in C^1(\Gamma)$ such that $u - \varphi$ has a local maximum at $x\in\Gamma$, for each $\delta > 0$, we have
\begin{eqnarray*}
G_{i}^{\delta}(u,\varphi, \varphi_{x_i}(x),x)\leq 0  &\mbox{if} & x \in E_i, 
\\
\max \Big{\{} \max_{1\leq i\leq N} G_{i}^{+,\delta}(u,\varphi, \varphi_{x_i}(O),O) , \ {FL}^{-,\delta}(u,\varphi, O) \Big{\}} \leq 0
& \mbox{if} & x=O,
\\
  \min \Big{\{} G_{i}^{\delta}(u,\varphi, \varphi_{x_i}(x),x)  , \ u(x)-h_i \Big{\}} \leq 0
& \mbox{if} & x=\ver_i.  
\end{eqnarray*}

We say that $u \in LSC(\Gamma)$  is a viscosity FL supersolution to problem~\eqref{eq}-\eqref{dirichlet-bc}
if, for each function $\varphi \in C^1(\Gamma)$ such that $u - \varphi$ has a local minimum at $x\in\Gamma$, for each $\delta > 0$, we have
\begin{eqnarray*}
G_{i}^{\delta}(u,\varphi, \varphi_{x_i}(x),x)\geq 0  &\mbox{if} & x \in E_i, 
\\
\max \Big{\{} \max_{1\leq i\leq N} G_{i}^{+,\delta}(u,\varphi, \varphi_{x_i}(O),O) , \  {FL}^{-,\delta}(u,\varphi, O)   \Big{\}} \geq 0
& \mbox{if} & x=O,
\\
  \max \Big{\{} G_{i}^{\delta}(u,\varphi, \varphi_{x_i}(x),x)  , \ u(x)-h_i \Big{\}} \geq 0
& \mbox{if} & x=\ver_i.  
\end{eqnarray*}

A viscosity solution is a continuous function on $\Gamma$ which is simultaneously a viscosity sub and supersolution in the sense above. 
\end{defi}

We give an equivalent definition for the FL solutions. We concentrate in the case of a junction point, the other cases being similar.
\begin{prop}\label{equiv-FL}
An $u \in USC(\Gamma)$  (resp. $LSC(\Gamma)$)
is a viscosity FL subsolution (resp. supersolution) to problem~\eqref{eq}-\eqref{dirichlet-bc}
if and only if the viscosity inequality at $x=O$ is replaced with
\begin{eqnarray}\label{ineg-equiv-FL}
&& \max \Big{\{} \max_{1\leq i\leq N} G_{i}^{+}(u,p_i,O) , \ {FL}^{-}(u, O) \Big{\}} \leq 0 \text{ (resp. $\geq 0$),}
\end{eqnarray}
for every ${\bf p} =(p_1,\cdots, p_N)\in D^+_\Gamma u(O)$ (resp.  $p\in D^-_\Gamma u(O)$).
\end{prop}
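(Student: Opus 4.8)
The plan is to prove the equivalence exactly as in the proof of Lemma~\ref{equiv-def}, namely by passing between the $\delta$-dependent viscosity formulation at $O$ (Definition~\ref{defi-FLsol}) and the formulation involving the superdifferential (resp. subdifferential) and the $\delta=0$ quantities $G_i^+(u,p_i,O)$ and $FL^-(u,O)$. The only new ingredient compared to Lemma~\ref{equiv-def} is the presence of the flux limiter term, so I would first record the elementary observation that $FL^{-,\delta}(u,\varphi,O)$ converges to $FL^-(u,O)$ as $\delta\downarrow 0$ whenever $u\in\mathcal F_O$: indeed, for each fixed $(p_1,\dots,p_N)$ the quantity inside the outer $\min$ converges (by definition of $\mathcal F_O$ and the fact that $\I_i[B_\delta(O)]\varphi(O)\to 0$, exactly as in the proof of Lemma~\ref{equiv-def}), and the convergence is monotone in $\delta$ for the censored/exterior decomposition, so the infimum over $(p_i)$ passes to the limit. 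I would state this as a short preliminary remark.

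First, for the subsolution case: assume $u\in USC(\Gamma)$ is an FL subsolution in the sense of Definition~\ref{defi-FLsol} and let $\mathbf p\in D^+_\Gamma u(O)$. Pick $\varphi\in C^1(\Gamma)$ with $\varphi_{x_i}(O)=p_i$ such that $u-\varphi$ has a (global, strict) maximum at $O$; by Lemma~\ref{lem-F}, $u\in\mathcal F_O$ and each $\I_i u(O)\in(-\infty,+\infty]$. The hypothesis gives, for every $\delta>0$, both $\max_i G_i^{+,\delta}(u,\varphi,\varphi_{x_i}(O),O)\le 0$ and $FL^{-,\delta}(u,\varphi,O)\le 0$. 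Sending $\delta\downarrow 0$ in the first, using $u\in\mathcal F_O$, yields $G_i^+(u,p_i,O)\le 0$ for all $i$ (and in particular each $\I_i u(O)$ is finite). Sending $\delta\downarrow 0$ in the second, using the preliminary remark, yields $FL^-(u,O)\le 0$. This is precisely \eqref{ineg-equiv-FL}. Conversely, assume \eqref{ineg-equiv-FL} holds and let $\varphi\in C^1(\Gamma)$ be such that $u-\varphi$ has a global maximum at $O$; then $\mathbf p:=(\varphi_{x_1}(O),\dots,\varphi_{x_N}(O))\in D^+_\Gamma u(O)$, hence $G_i^+(u,p_i,O)\le 0$ and $FL^-(u,O)\le 0$, and in particular all $\I_i u(O)$ are finite. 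For the first family of inequalities, the argument is identical to the one at the end of the proof of Lemma~\ref{equiv-def}: since $O$ is a maximum of $u-\varphi$, $\I_i[B_\delta(O)]u(O)\le \I_i[B_\delta(O)]\varphi(O)$, and therefore $G_i^{+,\delta}(u,\varphi,p_i,O)=G_i^+(u,p_i,O)+\I_i u(O)-\I_i[B_\delta^c(O)]u(O)-\I_i[B_\delta(O)]\varphi(O)\le G_i^+(u,p_i,O)\le 0$. For the flux-limiter inequality, I use that $FL^{-,\delta}(u,\varphi,O)$ is monotone in $\delta$ and converges to $FL^-(u,O)\le 0$; more carefully, for each admissible $(q_i)$ one has $G_i^{-,\delta}(u,\varphi,q_i,O)\le G_i^{-}(u,q_i,O)$ by the same maximum-point estimate (the $\varphi$ in the $B_\delta$ part being replaced conceptually by a test function for $q_i$ — but since the nonlocal part of $G_i^{-,\delta}$ is independent of the slope $q_i$, the same estimate with the fixed $\varphi$ applies), hence $FL^{-,\delta}(u,\varphi,O)\le FL^-(u,O)\le 0$ for all $\delta$. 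Thus $u$ is an FL subsolution in the sense of Definition~\ref{defi-FLsol}.

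The supersolution case is the mirror image, with $\min$/$\max$, $D^+$/$D^-$, and the inequality signs reversed; the subtle point is that both the $G_i^{+,\delta}$-term and the $FL^{-,\delta}$-term appear under a single $\max$ with a lower bound, so one must be slightly careful: from Definition~\ref{defi-FLsol} one gets, for every $\delta$, that $\max\{\max_i G_i^{+,\delta}(u,\varphi,\varphi_{x_i}(O),O),\,FL^{-,\delta}(u,\varphi,O)\}\ge 0$, i.e.\ for each $\delta$ at least one of the two terms is $\ge 0$; since there are only two terms, a subsequence argument (as in Lemma~\ref{equiv-def}, where finitely many edges were used) extracts a single term that is $\ge 0$ along $\delta_k\downarrow 0$, and then passing to the limit using $u\in\mathcal F_O$ (available here because $u-\varphi$ has a minimum at $O$, Lemma~\ref{lem-F}) gives $\ge 0$ for that limiting term, hence $\ge 0$ for the max in \eqref{ineg-equiv-FL}. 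The converse direction uses the maximum/minimum-point estimates in the opposite direction: $\I_i[B_\delta(O)]u(O)\ge\I_i[B_\delta(O)]\varphi(O)$ forces $G_i^{+,\delta}\ge G_i^+$ and $G_i^{-,\delta}\ge G_i^-$, hence $FL^{-,\delta}(u,\varphi,O)\ge FL^-(u,O)$, and everything passes through.

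I expect the main obstacle to be the careful handling of the flux-limiter term's limit $FL^{-,\delta}(u,\varphi,O)\to FL^-(u,O)$: one must justify interchanging the limit in $\delta$ with the infimum over $(p_1,\dots,p_N)\in\R^N$. This is where the monotonicity of $\delta\mapsto\I_i[B_\delta^c(O)]u(O)+\I_i[B_\delta(O)]\varphi(O)$ (shown in the proof of Lemma~\ref{lem-F}) and the coercivity-type growth noted after Definition~\ref{defFL} (which confines the minimizing $(p_i)$ to a compact set uniformly in $\delta$) do the work; once the minimizers are trapped in a fixed compact set, uniform convergence on that set gives the interchange. Everything else is a routine transcription of the proof of Lemma~\ref{equiv-def}, using $H_i^\pm$ in place of $H_i$ and keeping track of the extra $\max$ with the flux limiter.
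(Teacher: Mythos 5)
Your proposal is correct and follows essentially the same route as the paper's proof: pass between the $\delta$-level and $\delta=0$ formulations via Lemma~\ref{lem-F}, handle the flux-limiter term in the forward subsolution direction by confining the minimizing slopes to a compact set (coercivity of $H_i^-$ plus the Kirchhoff constraint, once the $\I_i u(O)$ are known to be finite), and use the termwise maximum/minimum-point estimates $G_i^{\pm,\delta}\lessgtr G_i^{\pm}$ for the converse and for the supersolution case. The only difference is organizational — you package the interchange of $\lim_{\delta}$ with $\min_{\bf p}$ as a preliminary remark, whereas the paper extracts converging minimizers ${\bf q}^\delta$ inline — which is immaterial.
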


\begin{proof}
We focus on the proof for subsolutions. Let ${\bf p} \in D^+_\Gamma u(O)$. There exists $\varphi \in C^1(\Gamma)$ such that $O$ is a local maximum point of $u - \varphi$, and $\varphi_{x_i}(O) = p_i$ for each $i$.

Since $u$ is a subsolution in the sense of Definition~\ref{defi-FLsol}, we have for all $\delta > 0$ small enough that
\begin{align*}
G_i^{+, \delta}(u, \varphi, p_i, O) \leq 0, \quad \mbox{for all} \ i, 
\end{align*}
and
\begin{align}\label{FLsub}
FL^{-, \delta}(u, \varphi, O) \leq 0.
\end{align}

From the first set of inequalities and Lemma~\ref{lem-F}, we infer
$\I_i u(O) = \lim_{\delta \to 0} \I_i[B_\delta(O)^c] u(O) \in \R$ for all $i$, and taking limit as $\delta \to 0$ we arrive at
$$
G_i^+(u, p_i, O) \leq 0, \quad \mbox{for all} \ i.
$$

On the other hand, from~\eqref{FLsub} and the definition of $FL^{-, \delta}(u, \varphi, O)$, we see that for all $\delta > 0$, there exists ${\bf q}^\delta \in \R^N$ such that
\begin{align*}
\max \Big{\{} \max_{1 \leq i \leq N} \{ G_i^{-,\delta}(u, \varphi, q_i^\delta, O), \sum_{i=1}^{N} - q_i^\delta - B \Big{\}} \leq 0.
\end{align*}
Using that $H_i^-(O, p) \to +\infty$ as $p \to +\infty$, and since we already know that $\I_i u(O)$ is finite, we infer that there exists $C > 0$ not depending on $\delta$ such that $q_i^\delta \leq C$ for all $i$ and all $\delta$ small. Using this and the fact that $\sum_{i=1}^{N} - q_i^\delta \leq B$, we conclude that, enlarging $C$ if necessary, we have $|q_i^\delta| \leq C$ for all $i$ and $\delta$ small. Thus, for each $i$, there exists $\tilde q_i \in \R$ such that, taking subsequences, we have $q_i^\delta \to \tilde q_i$ as $\delta \to 0$. Taking limit as $\delta \to 0$, we arrive at $FL^- (u, O) \leq 0$, which concludes this case.

Now we deal with the converse. Assume $u$ is a subsolution in the sense of~\eqref{ineg-equiv-FL}.
Let $\varphi \in C^1(\Gamma)$ be a test function such that $u-\varphi$ has a local maximum at $O$.
Then, ${\bf p} \in \R^N$ such that $p_i = \varphi_{x_i}(O)$, $1 \leq i \leq N$, belongs to $D^+_\Gamma u(O)$. By~\eqref{ineg-equiv-FL}, we have
\begin{equation}\label{FL1}
G_i^+(u, p_i, O) \leq 0  \text{ for $1 \leq i \leq N$} \quad \text{and} \quad FL^- (u, O) \leq 0.
\end{equation}
Since $u$ is touched from above, we have $\I_i u(O) \in [-\infty, +\infty)$ by Lemma~\ref{lem-F}, and thus, the above inequalities imply that $\I_i u(O) \in \R$.

Using that $u - \varphi$ has a local maximum at $O$, we readily have
\begin{align*}
  \I_i u(O) =& \I_i [B_\delta^c(O)]u(O)+   \I_i [B_\delta(O)]u(O)\\
  \leq&  \I_i [B_\delta^c(O)]u(O)+   \I_i [B_\delta(O)]\varphi (O),
\end{align*}
and using these inequalities into~\eqref{FL1} we conclude the viscosity inequality for subsolutions in the sense of Definition~\ref{defi-FLsol}.

For supersolution, the proof is similar to the one above complemented with Lemma~\ref{lem-F} for functions touched from below by $C^1$ functions. The arguments are slightly simpler since we require that only one expression inside the maximum is nonnegative to get the viscosity inequality.
\end{proof}

\subsection{Kirchhoff solutions are flux limiter solutions.}

\begin{teo}
For any $B\in\R$, a viscosity subsolution (resp. supersolution)
to the Kirchhoff problem~\eqref{eq}-\eqref{dirichlet-bc}-\eqref{Kirchhoff}
is a viscosity FL subsolution (resp. supersolution) for the
flux limiter  introduced in Definition~\ref{defFL}.
\end{teo}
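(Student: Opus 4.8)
The plan is to prove the two implications (subsolution and supersolution) separately, using the equivalent characterization of Kirchhoff solutions in terms of sub/superdifferentials (Lemma~\ref{equiv-def}) and the equivalent characterization of FL solutions (Proposition~\ref{equiv-FL}). Since away from $O$ the two notions of solution coincide verbatim (same equation in the edges, same Dirichlet condition at $\ver_i$), the entire content is the analysis at the junction $x=O$. So fix ${\bf p}=(p_1,\dots,p_N)\in D^+_\Gamma u(O)$ (resp.\ $D^-_\Gamma v(O)$) and let $\varphi\in C^1(\Gamma)$ realize it with a local maximum (resp.\ minimum) of $u-\varphi$ (resp.\ $v-\varphi$) at $O$; by Lemma~\ref{lem-F}, $u\in\mathcal F_O$ with $\I_i u(O)\in[-\infty,+\infty)$ (resp.\ $(-\infty,+\infty]$).

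For the \textbf{subsolution implication}, I would start from the Kirchhoff subsolution inequality at $O$: either $\sum_i -p_i-B\le 0$, or there is an index $i_0$ with $G_{i_0}(u,p_{i_0},O)\le 0$; in the latter case $\I_{i_0}u(O)$ is finite. I must deduce the two requirements in~\eqref{ineg-equiv-FL}: first, $G_i^+(u,p_i,O)\le 0$ for every $i$, and second, $FL^-(u,O)\le 0$. For the first, the key observation is that if ${\bf p}\in D^+_\Gamma u(O)$ then the Kirchhoff subsolution inequality is in fact inherited by every $\varphi'$ obtained from $\varphi$ by \emph{increasing} the slope $p_i$ on a single branch $E_i$ --- because increasing the outgoing slope on one branch keeps $u-\varphi'$ a local max at $O$ and strictly decreases $\sum_j -p'_j - B$, so for slopes $q_i$ large enough the Kirchhoff term becomes positive and the PDE inequality $G_i(u,q_i,O)\le 0$ must hold; since $H_i^+(O,q_i)=H_i(O,q_i)$ for $q_i\ge p_i^0(O)$ one gets $G_i^+(u,q_i,O)\le 0$ for $q_i$ large, and then monotonicity of $H_i^+$ in $p$ (it is nonincreasing past $p_i^0$... careful: $H_i^+$ is \emph{nondecreasing} then constant) together with $H_i^+(O,p_i)\le H_i^+(O,q_i)$ propagates $G_i^+(u,p_i,O)\le 0$ down to the original $p_i$. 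This is the nonlocal analog of the local argument in~\cite[Section 16.4]{bc24}; the nonlocal term is harmless because $\I_i u(O)$ is a fixed finite (or $-\infty$) quantity not affected by changing $\varphi$ on the branches. For the second requirement, $FL^-(u,O)\le 0$: I take the minimizing vector in the definition and must exhibit slopes ${\bf q}=(q_i)$ with $\max_i G_i^-(u,q_i,O)\le 0$ and $\sum_i -q_i-B\le 0$; choosing each $q_i$ below $p_i^0(O)$ makes $H_i^-(O,q_i)=H_i(O,p_i^0(O))$ constant, so $G_i^-(u,q_i,O)=\lambda u(O)-\I_i u(O)+H_i(O,p_i^0(O))$, which I bound using the PDE inequality satisfied at $O$ (or a slope-increasing argument as above to get finiteness of each $\I_i u(O)$), while taking the $q_i$ very negative forces $\sum_i -q_i-B\ge 0$ --- wait, I need $\le 0$, so instead I pick $q_i$ \emph{large positive} on enough branches; the correct move is: since the Kirchhoff flux $\sum -q_i-B$ decreases as slopes increase and the $G_i^-$ terms are eventually nonincreasing in each $q_i$ past $p_i^0$ (constant), one interpolates to a single choice where the max is $\le 0$. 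The precise bookkeeping here is the analog of~\cite[Lemma 16.4.x]{bc24}.

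For the \textbf{supersolution implication}, I would run the dual argument with $v\in LSC(\Gamma)$, ${\bf q}\in D^-_\Gamma v(O)$: from the Kirchhoff supersolution inequality, either $\sum_i -q_i-B\ge 0$ or there is $i_0$ with $G_{i_0}(v,q_{i_0},O)\ge 0$. I must show $\max\{\max_i G_i^+(v,q_i,O),\,FL^-(v,O)\}\ge 0$. If the Kirchhoff flux term is $\ge B$ this should directly give $FL^-(v,O)\ge 0$ since in the min-max defining $FL^-$, for \emph{any} competitor slopes ${\bf r}$, either some $G_i^-(v,r_i,O)\ge$ (the value at $q$) or $\sum -r_i-B$ controls it --- I need that the particular structure forces the minimax $\ge 0$; this uses that decreasing slopes makes $\sum -r_i -B$ larger. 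If instead $G_{i_0}(v,q_{i_0},O)\ge 0$, then since $H_{i_0}(O,q_{i_0})=\max\{H_{i_0}^-,H_{i_0}^+\}$, one of $G_{i_0}^\pm(v,q_{i_0},O)\ge 0$; if it is the $+$ part we get $\max_i G_i^+(v,q_i,O)\ge 0$ and are done; if it is the $-$ part, $q_{i_0}\le p_{i_0}^0(O)$ and I use this to show $FL^-(v,O)\ge 0$ (for any competitor, the $i_0$-th term is $\ge G_{i_0}^-(v,q_{i_0},O)\ge 0$ when $r_{i_0}\le q_{i_0}$, and when $r_{i_0}>q_{i_0}$ the flux term grows). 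Finiteness of $\I_i v(O)$ on the relevant branch comes from Lemma~\ref{lem-F} combined with the active inequality, exactly as in the proof of Lemma~\ref{inegGi}(i).

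\textbf{Main obstacle.} The delicate point is the ``slope-increasing'' trick at $O$ in the subsolution case: one must argue that although the original test function $\varphi$ may activate only the Kirchhoff term (so the PDE inequality $G_i(u,p_i,O)\le 0$ need \emph{not} hold for the original $p_i$), one can steepen a single branch to force the PDE to activate, extract $G_i^+(u,\cdot,O)\le 0$ there, and then descend back via monotonicity of $H_i^+$ --- all while controlling the nonlocal term, which is legitimate precisely because $\I_i u(O)$ is an intrinsic quantity (Lemma~\ref{lem-F}, independent of $\varphi$) and because modifying $\varphi$ only on the branches does not destroy the local-maximum property at $O$. Getting the $FL^-$ inequality also requires care in choosing the competitor slopes simultaneously on all $N$ branches so that both the $\max_i G_i^-$ part and the summed flux part are controlled by a single vector; this is the nonlocal transcription of the elementary lemmas in~\cite[Section 16.4]{bc24} and I would cite those for the purely algebraic interpolation, supplying only the nonlocal modifications.
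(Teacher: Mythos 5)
Your overall architecture (reduce everything to the junction, use Lemma~\ref{equiv-def} and Proposition~\ref{equiv-FL}, exploit $H_i=\max\{H_i^-,H_i^+\}$ and the monotonicity of $H_i^\pm$, invoke the min--max lemma of \cite[Lemma 5.3.1]{bc24}) matches the paper, but the central mechanism you propose for the subsolution case is broken. Your ``slope-increasing trick'' claims that by raising $p_i$ to a large $q_i$ on one branch you ``strictly decrease $\sum_j -p'_j-B$, so \dots the Kirchhoff term becomes positive and the PDE inequality must hold.'' This is self-contradictory: increasing $q_i$ sends $\sum_j -q_j-B$ to $-\infty$, so the min in the Kirchhoff subsolution condition is satisfied by the flux term alone and yields \emph{no} information on $G_i$. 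To force the flux term positive you would have to \emph{decrease} slopes, but since $D^+_{\bar J_i}u_i(0)=[\bar p_i,+\infty)$ this takes you out of $D^+_\Gamma u(O)$; and if $\sum_j-\bar p_j-B\le 0$ no admissible choice of slopes ever activates the PDE branch of the min at $O$. The inequality $G_i^+(u,p_i,O)\le 0$ simply cannot be extracted by testing at $O$.

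The paper's resolution is different and is the step you are missing: one perturbs the test function on $E_i$ by a \emph{singular} term $\epsilon x_i^{-\beta}$, which forces the maximum of $u_i-(\varphi_i+\epsilon x_i^{-\beta})$ to occur at an interior point $x_\epsilon\in J_i$ with $x_\epsilon\to 0^+$; there the unconditional interior inequality $G_i(u,p_i-\beta\epsilon x_\epsilon^{-\beta-1},\gamma_i(x_\epsilon))\le 0$ holds, one passes to $G_i^+$ via $H_i=\max\{H_i^-,H_i^+\}$ and the monotonicity of $H_i^+$, and then sends $x_\epsilon\to 0$. This last limit is legitimate only because subsolutions are Lipschitz near $O$ (Lemma~\ref{sous-sol-lip}), so that $x\mapsto \I_i u(x)$, hence $x\mapsto G_i^+(u,p,x)$, is continuous by Lemma~\ref{regI} --- a regularity input absent from your proposal (your remark that ``$\I_i u(O)$ is a fixed quantity'' does not address continuity in $x$ at interior points). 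A similar interior-relocation with $\psi_i=\tilde p_ix_i+\epsilon^{-1}x_i^2$, applied to the saddle vector $\tilde{\bf p}$ from \cite[Lemma 5.3.1]{bc24}, is what gives $FL^-(u,O)\le 0$; your ``interpolation'' sketch for that step, and your supersolution sketch (which in the paper is a careful two-case contradiction argument built on Lemma~\ref{inegGi}), are too incomplete to assess, but the subsolution gap above is already fatal as written.
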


\begin{proof}
The only difference in Definitions~\ref{defi-visco} and~\ref{defi-FLsol} is at the junction point~$O$.

\medskip
\noindent
\textsl{1.- Subsolution case.} In this case we can assume that $u$ is Lipschitz continuous at the junction, see Lemma~\ref{sous-sol-lip}.

Let ${\bf p} =(p_1,\cdots , p_N)\in D_\Gamma^+ u(O)$.
We want to prove that
\begin{equation*}
\max \Big{\{} \max_i G_i^+(u, p_i,O), FL^-(u,O) \Big{\}} \leq 0.
\end{equation*}

We start by proving that for all $i$, we have $G_i^+(u, p_i,O) \leq 0$. Since ${\bf p}\in D_\Gamma^+ u(O)$,
there exists $\varphi \in C^1(\Gamma)$ such that $O$ is a strict maximum point
of $u-\varphi$ and $\varphi_{x_i}(O)=p_i$.
Using that $O$ is a strict maximum point of $u - \varphi$, for all $\epsilon \in (0,1)$ small enough, we have that for all $i$, the function 
$$
x_i \mapsto u_i(x_i) - (\varphi_i(x_i) + \epsilon x_i^{-\beta}), \quad x_i \in J_i,
$$
attains its maximum at some point $x_\epsilon \in J_i$, and this point is such that $x_\epsilon \to 0^+$, and $u_i(x_\epsilon) - \varphi_i(x_\epsilon) \to u(O) - \varphi(O)$ as $\epsilon \to 0$. 
Hence, we can use the viscosity inequality for subsolutions on $J_i$, and since $u$ is Lipschitz,
from Lemma~\ref{equiv-def}, we have
\begin{equation*}
G_i(u, p_i - \beta \epsilon x_\epsilon^{-\beta - 1}, \gamma_i(x_\epsilon)) \leq 0.
\end{equation*} 
From~\eqref{maxHi}, it follows
\begin{equation*}
G_i^+(u, p_i - \beta \epsilon x_\epsilon^{-\beta - 1}, \gamma_i(x_\epsilon)) \leq 0,
\end{equation*} 
and since $p \mapsto G_i^+(u, p, x)$ is nonincreasing, we arrive at
\begin{equation*}
G_i^+(u, p_i, \gamma_i(x_\epsilon)) \leq 0.
\end{equation*}

Taking the limit $\epsilon \to 0$, by continuity of the map $x \mapsto G_i^+(u, p, x)$ (see Lemma~\ref{regI}),
we conclude the asserted inequality.

Thus, it remains to prove that $FL^-(u, O) \leq 0$.
For this, a simple generalization of~\cite[Lemma 5.3.1]{bc24}
implies the existence of a $(\tilde p_1, ..., \tilde p_N)\in \R^N$
such that, for all $i$ we have
\begin{eqnarray}\label{bc-lemma}
FL^-(u,O) = G_i^-(u, \tilde p_i, O) = \sum_{i=1}^N - \tilde p_i - B.
\end{eqnarray}

Now, for each $\epsilon \in (0,1)$, consider the function $\psi \in C^1(\Gamma)$ given by
$
\psi_i(x_i) = \tilde p_i x_i + \epsilon^{-1} x_i^2, \ 1 \leq i \leq N.
$
By boundedness of $u$, for all $\epsilon$ small enough we have the existence of a maximum point
$x_\epsilon \in \Gamma$ to $u - \psi$ such that $x_\epsilon \to O$ as $\epsilon \to 0$.

If there is a subsequence $\epsilon\to 0$ such that
$x_\epsilon \in E_i$ for some $i$, then by the viscosity inequality inside the edge we have
\begin{equation*}
G_i(u, \tilde p_i + 2\epsilon^{-1} x_\epsilon, x_\epsilon) \leq 0,
\end{equation*}
which implies, thanks to~\eqref{maxHi},
\begin{equation*}
G_i^-(u, \tilde p_i + 2\epsilon^{-1} x_\epsilon, x_\epsilon) \leq 0,
\end{equation*}
and, since $p \mapsto G_i^-(u, p, x)$ is nondecreasing, we obtain
\begin{equation*}
G_i^-(u, \tilde p_i, x_\epsilon) \leq 0.
\end{equation*}
Sending $\epsilon \to 0$ and using~\eqref{bc-lemma}, we arrive at 
$$
FL^-(u, O) = G_i^-(u, \tilde p_i, O) \leq 0.
$$

If $x_\epsilon = O$ for all $\epsilon$, writing the subsolution inequality at $x_\epsilon = O$
in Lemma~\ref{equiv-def}, we obtain that, either $G_i(u, \tilde p_i, O)\leq 0$ for some $i$,
or $\sum_{j=1}^N -\tilde p_j - B\leq 0$. In both cases, using~\eqref{maxHi} and~\eqref{bc-lemma},
we obtain $FL^-(u, O)\leq 0$,
which concludes the proof of the subsolution case.
\medskip

\noindent
\textsl{2.- Supersolution case.} Here we cannot assume $u$ is Lipschitz continuous.

Thanks to Proposition~\ref{equiv-FL}, it is enough
to prove that, for all ${\bf p} =(p_1,\cdots ,p_N)\in  D^-_{\Gamma} u(O)$, we have
\begin{equation}\label{condition-FL}
\max \{ \max_{1\leq i\leq N} G_i^{+}(u, p_i, O), FL^-(u, O) \} \geq 0,
\end{equation}

If $D_\Gamma^-u(O)=\emptyset$ or $FL^-(u, O)\geq 0$, then we are done.
Therefore, in the rest of the proof, we assume that $D_\Gamma^-u(O)\neq\emptyset$ (and a fortiori $D_{\bar J_i}^+ u_i(0) \neq \emptyset$ for all $i$)
and
\begin{equation}\label{contradiction-FL}
FL^-(u, O) <0.
\end{equation}

From the fact that
the subdifferential is non empty, we can deduce two things.

At first, from Lemma~\ref{lem-F}, $-\I_i u(O)$, hence $G_i(u, p_i, O)$ and $G_i^\pm(u, p_i, O)$,
is in $[-\infty,\infty)$ for all $i$.
We define  the subset $\mathcal{A}$ of ``active indices'' $i$, for which they are finite. 

Secondly,
from~\cite[Proposition 2.5.4]{bc24}, the set $D^-_{\bar J_i} u_i(0)$ is a nonempty interval,
either $D^-_{\bar J_i} u_i(0) = \R$,
or $\underline p_i=\mathop{\rm lim\,inf}_{x\to 0^+}\frac{u_i(x)-u_i(0)}{x}$ is finite and
$D^-_{\bar J_i} u_i(0) = (-\infty, \underline p_i]$.

Now, we divide the proof in two cases showing that
  the first one yields~\eqref{condition-FL} as desired, whereas the second one leads to a contradiction.
\smallskip

\noindent
{\it Case 1. There exists $i$ such that, for all $p \in D^-_{\bar J_i} u_i(0)$, $G_i(u,p,O)\geq 0$.}
In this case, we have that $i \in \mathcal A$.

If $p_i^0(O)\in  D^-_{\bar J_i} u_i(0)$ (see~\eqref{Hconvex}), then $G_i(u,p_i^0(O),O)= G_i^{+}(u,p_i^0(O),O)\geq 0$.
Since $G_i^{+}(u,p_i^0(O),O)=\min_{p\in\R} G_i^{+}(u,p,O)$, we conclude that~\eqref{condition-FL}
holds true.

If $p_i^0(O)\not\in  D^-_{\bar J_i} u_i(0)$, then
$p_i^0(O)>\underline p_i\in  D^-_{\bar J_i} u_i(0)$, thus
$G_i(u,\underline p_i,O)= G_i^{+}(u,\underline p_i,O)\geq 0$.
Using that $H_i^+(O,\cdot)$ is nonincreasing, we obtain
that $G_i^{+}(u,p,O)\geq 0$ for all $p \in D^-_{\bar J_i} u_i(0)$, 
and~\eqref{condition-FL} also holds true. 
The proof is done in this case.
\smallskip

\noindent
{\it Case 2. For all $i$, there exists $p_i\in D^-_{\bar J_i} u_i(O)$ such that
\begin{equation}\label{cond-neg123}
G_i(u,p_i,O)< 0.
\end{equation}
}

We claim that, in this case, all indices $i$ are active.
To prove the claim, let us
fix ${\bf p} =(p_1,\cdots ,p_N)\in D^-_\Gamma u(O)$ such that the inequalities~\eqref{cond-neg123} hold.

At first, if there exists $i_0$ such that
$G_{i_0}(u,p_{i_0},O)=-\infty$, i.e., $i_0\not\in \mathcal{A}$, and
$D^-_{\bar J_{i_0}}u_{i_0}(O)=\R$
then we can increase $p_{i_0}$
in order that we still have $G_{i_0}(u,p_{i_0},O)= -\infty$ and moreover the Kirchhoff
condition is negative, i.e., $\sum_{i=1}^N -p_i - B<0$. Putting this together
with~\eqref{cond-neg123}, we obtain that $u$ cannot be a Kirchhoff subsolution at $O$,
which is a contradiction.

Secondly, if there exists $i_0$ such that
$G_{i_0}(u,p_{i_0},O)=-\infty$ and
$D^-_{\bar J_{i_0}}u_{i_0}(O)= (-\infty, \underline p_{i_0}]$,
then we also reach a contradiction since, by Lemma~\ref{inegGi} we should have $G_{i_0}(u,p_{i_0},O)\geq 0$.

Therefore, all the indices are active.
Taking into account~\eqref{contradiction-FL},
from Definition~\ref{defFL} of the flux limiter
and~\cite[Lemma 5.3.1]{bc24}, we infer the existence of ${\bf \tilde{p}} = (\tilde p_1, \cdots , \tilde{p}_N)\in \R^N$
such that
\begin{align}\label{egal-lem-guy}
FL^-(u, O) = G_i^-(u, \tilde p_i, O) = \sum_{i=1}^N -\tilde p_i - B <0  \quad \text{for all $i$}.
\end{align}

From~\eqref{maxHi} and~\eqref{cond-neg123}, $G_i^+(u, p_i, O) < 0$ for all $i$.
Since  $p_i \leq \underline p_i$ and  $p\mapsto H_i^+(O,p)$ is nonincreasing, 
we get $G_i^+(u, \underline p_i, O) < 0$.
Since $G_i(u, \underline p_i, O) \geq 0$ from
Lemma~\ref{inegGi}, necessarily
$$
G_i^-(u, \underline p_i, O) \geq 0   \quad \text{for all $i$}.
$$
Using that $G_i^-(u, \tilde p_i, O) < 0$ and $p\mapsto H_i^-(O,p)$ is nondecreasing,
we infer that $\underline p_i$ is in the increasing part
of $H_i^-$, and therefore $\tilde p_i < \underline p_i$.
It follows that we can find $p_i^* \in (\tilde p_i, \underline p_i)$, thus $p_i^*\in D^-_{\bar J_i} u_i(0)$,
such that $p_i^*$ is in the increasing part of $H_i^-$ and
\begin{eqnarray}\label{inegGimoins}
G_i(u, p_i^*, O) = G_i^-(u, p_i^*, O) < 0  \quad \text{for all $i$}.
\end{eqnarray}

At the end, we have constructed ${\bf p^*} = (p_1^*,\cdots , p_N^*)\in D^-_\Gamma u(O)$ such that
$G_i^-(u, p_i^*, O) < 0$ for all $i$.
In addition, since $p_i^* > \tilde p_i$,~\eqref{egal-lem-guy}
implies
\begin{eqnarray}\label{Kmoins}
\sum_{i=1}^N -p_i^* - B <  \sum_{i=1}^N -\tilde p_i - B <0.
\end{eqnarray}

Finally,~\eqref{inegGimoins} and\eqref{Kmoins} contradict the fact that $u$ is a Kirchhoff
supersolution at $O$.
This concludes the proof.
\end{proof}

\subsection{Flux limiter solutions are Kirchhoff solutions.}

\begin{teo}
For any $B\in\R$,  a viscosity FL subsolution (resp. supersolution)
is  a viscosity subsolution (resp. supersolution)
to the Kirchhoff problem~\eqref{eq}-\eqref{dirichlet-bc}-\eqref{Kirchhoff}.
\end{teo}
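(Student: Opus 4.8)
The plan is to use the fact that Definitions~\ref{defi-visco} and~\ref{defi-FLsol} agree at every point of $\Gamma$ except the junction $O$: inside an edge $E_i$ both ask for $G_i^{\delta}(u,\varphi,\varphi_{x_i}(x),x)\le 0$ (resp.\ $\ge 0$), and at a boundary vertex $\ver_i$ both ask for $\min\{G_i^{\delta}(\cdots),u(\ver_i)-h_i\}\le 0$ (resp.\ $\max\{\cdots\}\ge 0$). Hence a viscosity FL subsolution (resp.\ supersolution) automatically satisfies the Kirchhoff viscosity inequalities away from $O$, and the whole content of the statement is the verification of the Kirchhoff inequality at $O$. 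By Proposition~\ref{equiv-FL} and Lemma~\ref{equiv-def} we may argue with sub/superdifferentials. For a FL subsolution $u$ and any ${\bf p}=(p_1,\dots,p_N)\in D^+_\Gamma u(O)$, Proposition~\ref{equiv-FL} gives $G_i^{+}(u,p_i,O)\le 0$ for all $i$ and $FL^-(u,O)\le 0$, and we must produce $\min\{\min_i G_i(u,p_i,O),\ \sum_i -p_i-B\}\le 0$; for a FL supersolution and ${\bf p}\in D^-_\Gamma u(O)$ we have $\max\{\max_i G_i^{+}(u,p_i,O),\ FL^-(u,O)\}\ge 0$ and must produce $\max\{\max_i G_i(u,p_i,O),\ \sum_i -p_i-B\}\ge 0$.

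For the subsolution case, first note that $G_i^{+}(u,p_i,O)\le 0<+\infty$, combined with Lemma~\ref{lem-F} (which gives $\I_i u(O)\in[-\infty,+\infty)$ when $u$ is touched from above), forces $\I_i u(O)\in\R$ for every $i$; in particular the generalization of~\cite[Lemma 5.3.1]{bc24} invoked in Definition~\ref{defFL} provides $\tilde{\bf p}\in\R^N$ with $G_i^{-}(u,\tilde p_i,O)=FL^-(u,O)\le 0$ for all $i$ and $\sum_i -\tilde p_i-B=FL^-(u,O)\le 0$. Arguing by contradiction, suppose $G_i(u,p_i,O)>0$ for every $i$ and $\sum_i -p_i-B>0$. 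By~\eqref{maxHi}, $G_i(u,p_i,O)=\max\{G_i^{-}(u,p_i,O),G_i^{+}(u,p_i,O)\}$, and since $G_i^{+}(u,p_i,O)\le 0$ this forces $G_i^{-}(u,p_i,O)>0$ for every $i$. Since $p\mapsto H_i^-(O,p)$ is nondecreasing, so is $p\mapsto G_i^{-}(u,p,O)$, so from $G_i^{-}(u,p_i,O)>0\ge G_i^{-}(u,\tilde p_i,O)$ we deduce $p_i\ge\tilde p_i$ for each $i$. Summing, $\sum_i -p_i-B\le\sum_i -\tilde p_i-B\le 0$, contradicting $\sum_i -p_i-B>0$. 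Thus the Kirchhoff subsolution inequality holds at $O$.

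For the supersolution case, let ${\bf p}\in D^-_\Gamma u(O)$ and distinguish according to which term realizes the inequality of Proposition~\ref{equiv-FL}. If $\max_i G_i^{+}(u,p_i,O)\ge 0$, pick $i$ with $G_i^{+}(u,p_i,O)\ge 0$; then $G_i(u,p_i,O)\ge G_i^{+}(u,p_i,O)\ge 0$ by~\eqref{maxHi} and we are done. Otherwise $FL^-(u,O)\ge 0$; since ${\bf p}$ is an admissible competitor in the minimization defining $FL^-(u,O)$ (each $p_i\in\R$, and $u\in\mathcal{F}_O$ by Lemma~\ref{lem-F} because $u$ is touched from below at $O$), we get
\[
\max\Big\{\max_{1\le i\le N} G_i^{-}(u,p_i,O),\ \sum_{i=1}^N -p_i-B\Big\}\ \ge\ FL^-(u,O)\ \ge\ 0,
\]
so either some $G_i^{-}(u,p_i,O)\ge 0$, whence $G_i(u,p_i,O)\ge 0$ by~\eqref{maxHi}, or $\sum_i -p_i-B\ge 0$. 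In all cases the Kirchhoff supersolution inequality holds at $O$, which, together with the remarks of the first paragraph on the interior of the edges and on the vertices $\ver_i$, finishes the proof.

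I do not expect a serious obstacle here: once Proposition~\ref{equiv-FL} is available the argument is essentially algebraic, relying only on the decomposition~\eqref{maxHi} and the monotonicity of $H_i^-(O,\cdot)$ and $H_i^+(O,\cdot)$ coming from~\eqref{Hconvex}. The only delicate points are the bookkeeping of possibly infinite values---ensuring $u\in\mathcal{F}_O$ and that $\I_i u(O)$ is finite in the regime actually used, both supplied by Lemma~\ref{lem-F}---and, in the subsolution step, the attainment of the minimum in $FL^-(u,O)$, which is exactly the generalization of~\cite[Lemma 5.3.1]{bc24} already used earlier in the paper.
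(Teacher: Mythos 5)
Your proof is correct, and for the subsolution case it is essentially the paper's argument in contrapositive form: both rest on the attainment of the minimum in $FL^-(u,O)$ (the generalization of \cite[Lemma 5.3.1]{bc24}), the decomposition~\eqref{maxHi}, and the monotonicity of $p\mapsto H_i^-(O,p)$, and both correctly secure finiteness of $\I_i u(O)$ from $G_i^+(u,p_i,O)\le 0$ together with Lemma~\ref{lem-F}. The one place where you genuinely diverge is the supersolution case when $FL^-(u,O)\ge 0$: the paper again invokes the attainment lemma to produce $\tilde{\bf p}$ and then runs the monotonicity argument on a coordinate $i_0$ with $\tilde p_{i_0}<p_{i_0}$, whereas you simply observe that the given ${\bf p}$ is an admissible competitor in the infimum defining $FL^-(u,O)$, so that $\max\{\max_i G_i^-(u,p_i,O),\sum_i -p_i-B\}\ge FL^-(u,O)\ge 0$ and one of the two terms must be nonnegative. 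Your route is slightly shorter and does not need the attainment of the minimum at that step (only the elementary inequality $\inf\le$ value at a competitor), which is a small but real simplification; the paper's version has the cosmetic advantage of treating the two cases symmetrically via the same lemma.
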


\begin{proof}
We again concentrate on the junction point $O$.

\medskip
\noindent
\textsl{1.- Subsolution case.} Let ${\bf p} =(p_1, \cdots , p_N)\in D_\Gamma^+ u(O)$.
We want to prove that
\begin{equation}\label{ineq-for-K}
\min \{ \min_{1\leq i\leq N} G_i(u, p_i, O), \sum_{i=1}^N - p_i - B \} \leq 0,
\end{equation}
knowing that
$$
\max \{ \max_{1\leq i\leq N} G_i^+(u, p_i, O), FL^-(u, O) \} \leq 0.
$$
In particular, taking into account Lemma~\ref{lem-F}, all the nonlocal terms are finite.

Assume that $\sum_{i=1}^N -p_i > B$. The flux limited condition $FL^-(u, O) \leq 0$
together with~\cite[Lemma 5.3.1]{bc24}
implies the existence of $( \tilde p_1, \cdots , \tilde p_N)$ such that
$$
FL^-(u, O) = G_i^-(u, \tilde p_i, O) = \sum_i -\tilde p_i - B \leq 0 \quad\text{for all $i$.}
$$
In particular, we have $\sum_i -p_i > \sum_i -\tilde p_i$, from which there exists
an index $i_0$ such that $p_{i_0} < \tilde p_{i_0}$.
Since $p\mapsto H_i^-(O,p)$ is nondecreasing, we have that $G_{i_0}^-(u, p_{i_0}, O) \leq 0$.
The inequality for the FL subsolution also provides the inequality $G_{i_0}^+(u, p_{i_0}, O) \leq 0$.
Finally, we obtain $G_{i_0}(u, p_{i_0}, O) \leq 0$, from which we conclude
that~\eqref{ineq-for-K} holds.
\medskip

\noindent
\textsl{2.- Supersolution case.}
Let ${\bf p} =(p_1, \cdots , p_N)\in D^+_\Gamma u(O)$.
We want to prove that
$$
\max \{ \max_{1\leq i\leq N} G_i(u, p_i, O), \sum_{i=1}^N - p_i - B \} \geq 0,
$$
knowing that
$$
\max \{ \max_{1\leq i\leq N} G_i^+(u, p_i, O), FL^-(u, O) \} \geq 0.
$$

The above inequality means that, either there exists an index $i$ such that
$$
G_i^+(u, p_i, O) \geq 0,
$$
or $FL^-(u, O) \geq 0$. In the first case, we get the conclusion since $G_i \geq G_i^+$.
In the second case,~\cite[Lemma 5.3.1]{bc24}
implies the existence of $( \tilde p_1, \cdots , \tilde p_N)$ such that
$$
FL^-(u, O) = G_i^-(u, \tilde p_i, O) = \sum_i -\tilde p_i - B \geq 0 \quad\text{for all $i$.}
$$
If $\sum_{i=1}^N - p_i < B$, then, as before, there exists $i_0$ such that $\tilde p_{i_0} < p_{i_0}$, from which
$$
0 \leq G_{i_0}^-(u, \tilde p_{i_0}, O) \leq G_{i_0}^-(u, p_{i_0}, O) \leq G_{i_0}(u, p_{i_0}, O),
$$
and the result follows.
\end{proof}

\appendix


\section{Proof of Lemma~\ref{regI}.}
\label{AppregIu}

This subsection is completely devoted to the proof of Lemma~\ref{regI}.

Let $x, y \in \bar E_i$. We have that
\begin{align*}
\I_iu(x) - \I_i u(y) = 
\sum_{j=1}^N \I_{ij}u(x) - \I_{ij} u(y). 
\end{align*}

In what follows, we identify $x,y,z \in \bar E_i$ and $x_i,y_i,z_i \in [0, a_i]$. 
Assume $x \leq y$. Then, we see that
\begin{align*}
\I_{ii} u(x) - \I_{ii}u(y) = & -\int_{-y}^{-x} [u_i(x + z) - u_i(x)] \nu_{ii}(x, |z|)dz \\
&  + \int_{-x}^{a_i - y} [u_i(x + z) - u_i(x)] \nu_{ii}(x, |z|)dz \\
& - \int_{-x}^{a_i - y} [u_i(y + z) - u_i(y)] \nu_{ii}(y, |z|)dz \\
& + \int_{a_i - y}^{a_i - x} [u_i(y + z) - u_i(y)] \nu_{ii}(y, |z|)dz \\
=: & \ I_1 + I_2 + I_3 + I_4.
\end{align*}

The terms $I_1$ and $I_4$ can be estimated similarly. 
For the first term, using the H\"older continuity of $u_i$ and
the inequality
\begin{eqnarray*}
&&y^\alpha - x^\alpha \leq (y-x)^\alpha \quad \text{for $0\leq x\leq y$ and $\alpha\in [0,1]$},
\end{eqnarray*}
we have
\begin{align*}
-\int_{-y}^{-x} [u_i(x + z) - u_i(x)] \nu_{ii}(x, |z|)dz \leq & \Lambda [u_i]_{C^{0,\gamma}(\bar J_i)} \int_{-y}^{-x} |z|^{\gamma -1-\sigma} dz \\
\leq & \frac{\Lambda}{\gamma - \sigma} [u]_{C^{0,\gamma}(\Gamma)} |x - y|^{\gamma - \sigma}.
\end{align*}
We obtain the same estimate for $I_4$.

Next, we estimate
\begin{align*}
I_2 + I_3 = &  \int_{-x}^{a_i - y} [u_i(x + z) - u_i(x)] (\nu_{ii}(x, |z|) - \nu_{ii}(y, |z|))dz \\
& + \int_{-x}^{a_i - y} [u_i(x + z) - u_i(x) - (u_i(y + z) - u_i(y))] \nu_{ii}(y, |z|)dz \\
=: & \ I_5 + I_6.
\end{align*}
For $I_5$, using H\"older estimates of $u_i$ and the Lipschitz assumption on $\nu$, we see that
\begin{equation*}
I_5 \leq 2\Lambda |x -y| [ u_i ]_{C^{0,\gamma}(\bar J_i)} \int_{-x}^{a_i - y} |z|^{\gamma - 1 - \sigma} dz \leq C \Lambda \|u\|_{C^\gamma} |x - y|,  
\end{equation*}
where $C > 0$ is a constant just depending on $\sigma, \gamma$ and $a_i$.

For $I_6$, we write $[-x, a_i - y] = A \cup B$ with $A = [-x, a_i - y] \cap [-|x - y|, |x - y|]$ and $B = [-x, a_i - y] \setminus A$. We take profit of the H\"older regularity of $u$, that is,
\begin{equation}\label{ui-hold123}
\begin{split}
& u_i(x + z) - u_i(x) - (u_i(y + z) - u_i(y)) \leq 2 [u]_{C^{0,\gamma}(\Gamma)} |z|^\gamma \quad \mbox{for} \ z \in A, \\
& u_i(x + z) - u_i(x) - (u_i(y + z) - u_i(y)) \leq 2 [u]_{C^{0,\gamma}(\Gamma)} |x - y|^\gamma \quad \mbox{for} \ z \in B.
\end{split}
\end{equation}
to obtain, thanks to the assumptions on $\nu$,
\begin{align*}
I_6 \leq & 2 \Lambda [u]_{C^{0,\gamma}(\Gamma)} \Big{(} \int_A |z|^{\gamma - 1 - \sigma} dz + |x - y|^\gamma \int_B |z|^{-1-\sigma} dz \Big{)} \\
\leq & C \Lambda [u]_{C^{0,\gamma}(\Gamma)} |x - y|^{\gamma - \sigma},
\end{align*}
for some $C > 0$ just depending on $\sigma, \gamma$ and $a_i$.

Since the argument are symmetric in $x$ and $y$, we conclude that
$$
|\I_{ii}u(x) - \I_{ii}u(y)| \leq C \Lambda [u]_{C^{0,\gamma}(\Gamma)} |x - y|^{\gamma - \sigma}.
$$

For $j \neq i$, since for 
$x,y\in E_i$ and 
$z \in E_j$, we have $\rho(x,z) = x_i + z_j$, $\rho(y,z) = y_i + z_j$, by definition we obtain
\begin{eqnarray}
  \label{reecrit234}
\quad\I_{ij} u(x) - \I_{ij} u(y) & =  & \int_{x_i}^{a_j + x_i} [u_j(z_j- x_i) - u_i(x_i)] \nu_{ij}(x_i, z_j)dz_j \\ \nonumber
&& - \int_{y_i}^{a_j + y_i} [u_j(z_j-y_i) - u_i(y_i)] \nu_{ij}(y_i, z_j)dz_j,
\end{eqnarray}
and from here we follow the same estimates as above, noticing that now, thanks to~\eqref{hold-ineg} and~\eqref{hold-gam}, the estimate~\eqref{ui-hold123}
reads
\begin{eqnarray*}
&& u_i(z_j-x_i) - u_i(x_i) - (u_i(z_j-y_i) - u_i(y_i)) \leq 2^{2-\gamma} [u]_{C^{0,\gamma}(\Gamma)} |z_j|^\gamma \quad \mbox{for} \ z \in A.
\end{eqnarray*}
\qed

\section{Well-posedness for censored problems}
\label{sec:comp-censor}

We prove here the comparison principle for the Dirichlet problem
\begin{equation}\label{dirichlet-gene}
  \left \{ \begin{array}{l} \lambda u - \mu_i(x) u_{x_i x_i} - \I_i u + H_i(x, u_{x_i}) = f_i(x)
    \quad \mbox{on} \ E_i, 1\leq i\leq N, \\
u(O) = \theta, \\
u_i(a_i) = h_i, \quad 1\leq i\leq N,
  \end{array} \right .
\end{equation}
on a junction $\Gamma$ when the sub and supersolution are ordered at the vertices.

The PDE~\eqref{dirichlet-gene} is a little more general than~\eqref{eqaprox0}.
We will assume that the datas satisfy~\eqref{steady} and, in addition, for all $1\leq i\leq N$,
\begin{eqnarray}\label{steady-plus}
&&
\left \{ \begin{array}{l}
\text{$0\leq \underline{\mu}\leq \mu_i\in C(\bar J_i)$, \ $|\sqrt{\mu_i(x)}- \sqrt{\mu_i(y)}|\leq C_\mu|x-y|$,\quad
  $x,y\in \bar{J}_i$,}\\[1mm]
\text{$f_i\in  C(\bar J_i)$.}
\end{array} \right .
\end{eqnarray}

\begin{lema}\label{comp-censA}
Assume $\Gamma$ is a junction with $N \geq 1$ edges.
Assume that~\eqref{steady} and~\eqref{steady-plus} hold and $\theta\in\R$. Let
$u \in USC(\Gamma)$ be a viscosity subsolution and $v \in LSC(\Gamma)$ be a viscosity supersolution
of~\eqref{dirichlet-gene}
such that $u \leq v$ on $\VV = \{ \ver_i \}_{1 \leq i \leq N} \cup \{ O \}$. Then $u \leq v$ on $\Gamma$.
\end{lema}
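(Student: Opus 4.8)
The statement is a comparison principle for the viscous censored Dirichlet problem \eqref{dirichlet-gene} on a junction $\Gamma$, with sub/supersolutions already ordered at all vertices. The plan is to argue by contradiction: assume $M := \sup_\Gamma (u-v) > 0$. Since $\Gamma$ is compact and $u-v$ is USC, the supremum is attained at some $x_0 \in \Gamma$; and since $u \le v$ on $\VV$, we must have $x_0 \in \Gamma \setminus \VV$, i.e., $x_0 \in E_i$ for some $i$. So all the difficulty is concentrated in a single edge, away from the junction and boundary vertices, and the Kirchhoff/Dirichlet conditions play no role beyond guaranteeing $x_0 \notin \VV$. The work reduces to a standard one-dimensional second-order comparison argument on the open interval $E_i \cong J_i$, but with the extra nonlocal term $\I_i$ that couples to all the other edges.

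First I would set up the classical doubling of variables \emph{inside the edge} $E_i$: for $\alpha > 0$ consider $\Phi_\alpha(x,y) = u_i(x) - v_i(y) - \frac{\alpha}{2}|x-y|^2$ on $\bar J_i \times \bar J_i$ (or a small closed subinterval around $(x_0)_i$ where $u-v$ stays close to $M$, so that the penalized maxima stay in the interior). Standard arguments give maximizers $(x_\alpha, y_\alpha)$ with $\alpha |x_\alpha - y_\alpha|^2 \to 0$, $x_\alpha, y_\alpha \to (x_0)_i$, and $u_i(x_\alpha) - v_i(y_\alpha) \to M$; for $\alpha$ large these points are interior to $J_i$ and we can use the test functions $x \mapsto \frac{\alpha}{2}|x - y_\alpha|^2$ (for $u$) and $y \mapsto -\frac{\alpha}{2}|x_\alpha - y|^2$ (for $v$). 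To handle the second-order terms in the viscous equation, I would invoke the Jensen–Ishii lemma (theorem on sums) in this one-dimensional setting, extracting $X, Y \in \R$ with $X \le Y$ and $p_\alpha := \alpha(x_\alpha - y_\alpha)$ playing the role of the common gradient, so that $(p_\alpha, X) \in \bar J^{2,+} u_i(x_\alpha)$ and $(p_\alpha, Y) \in \bar J^{2,-} v_i(y_\alpha)$. The square-root Lipschitz regularity of $\mu_i$ in \eqref{steady-plus} is exactly what is needed to bound $\mu_i(x_\alpha) X - \mu_i(y_\alpha) Y \le C_\mu^2 \,\alpha |x_\alpha - y_\alpha|^2 \to 0$ (this is the standard trick; the degenerate case $\underline\mu = 0$ is included since we only need an upper bound).

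The one genuinely nonstandard point is the nonlocal term. Since the maxima are interior to the edge, I would use the equivalent viscosity formulation of Lemma~\ref{equiv-def} (or directly the $G_i^\delta$ formulation of Definition~\ref{defi-visco}), splitting $\I_i$ at a small radius $\delta$ into a near part evaluated on the smooth test function and a far part evaluated on $u$ (resp.\ $v$). For the far parts, one uses that $(x_\alpha, y_\alpha)$ is a maximum of $\Phi_\alpha$ over the whole interval to compare $u_i(z) - u_i(x_\alpha)$ with $v_i(z) - v_i(y_\alpha)$ plus controlled error, \emph{for} $z$ \emph{in} $\bar E_i$; for the other edges $\bar E_j$, $j \ne i$, one uses the global bound $u \le v + M$ together with $u_i(x_\alpha) - v_i(y_\alpha) \le M$ so that each integrand $[u(z) - u_i(x_\alpha)] - [v(z) - v_i(y_\alpha)] \le M - (u_i(x_\alpha) - v_i(y_\alpha)) \le$ small, against a finite Lévy mass (finite because $\rho(x_\alpha, \cdot)$ is bounded below on $\bar E_j$). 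The near parts are controlled by $\I_i[B_\delta(x_\alpha)](\tfrac{\alpha}{2}|\cdot - y_\alpha|^2) = O(\alpha\,\delta^{2-\sigma})$ via \eqref{hyp-nu}, which is small after first fixing $\delta$ small and then… no, in the right order: fix $\alpha$-dependence carefully — actually since we may let $\delta \downarrow 0$ first using $u, v \in \mathcal F_{x_\alpha}$ (Lemma~\ref{lem-F}), the cleanest route is to send $\delta \to 0$ to pass to the $G_i$ formulation with $\I_i u(x_\alpha), \I_i v(y_\alpha)$ finite, and then estimate $\I_i u(x_\alpha) - \I_i v(y_\alpha) \le \lambda M / 2 + o(1)$ directly from the global maximum property as above. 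Subtracting the two viscosity inequalities then yields, using \eqref{H}(ii) to absorb $|H_i(x_\alpha, p_\alpha) - H_i(y_\alpha, p_\alpha)| \le C_H(1 + |p_\alpha|)|x_\alpha - y_\alpha| \to 0$,
\[
\lambda M \le \lambda(u_i(x_\alpha) - v_i(y_\alpha)) \le \big(\I_i u(x_\alpha) - \I_i v(y_\alpha)\big) + o(1) \le \tfrac{\lambda M}{2} + o(1),
\]
hence $\lambda M \le 0$, contradicting $M > 0$. The main obstacle is bookkeeping the nonlocal term: ensuring the far-part estimate is uniform in $\alpha$ (which needs the global, not just local, maximality of the penalized function and the uniform Lévy bound \eqref{hyp-nu}) and correctly ordering the limits in $\delta$ and $\alpha$; everything else is the textbook second-order comparison on an interval.
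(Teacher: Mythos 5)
Your overall architecture coincides with the paper's proof in Appendix~B: contradiction with $M>0$, localization of the doubled maximum inside a single edge $E_i$ at distance $\geq\delta_0$ from $\VV$ thanks to the ordering at the vertices, the nonlocal Jensen--Ishii lemma of \cite[Corollary 1]{bi08} for the viscous term, and the $\sqrt{\mu_i}$-Lipschitz trick to kill $\mu_i(\bar x)X-\mu_i(\bar y)Y$. The gap is in the nonlocal term, which is precisely where the paper spends most of its effort. Your estimate for the edges $j\neq i$ compares the two \emph{integrands} and concludes that their difference is $\leq M-(u_i(\bar x)-v_i(\bar y))=o_\alpha(1)$; but the two integrals are taken against \emph{different kernels}, $\nu_{ij}(\bar x,\rho(\bar x,z))$ and $\nu_{ij}(\bar y,\rho(\bar y,z))$, which differ both in the base point and in the $\rho$-argument, and no regularity of $r\mapsto\nu_{ij}(x,r)$ is assumed. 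The paper resolves this by the change of variables \eqref{reecrit234}, which aligns the $\rho$-arguments at the price of the boundary terms \eqref{traslape}, and then invokes the Lipschitz-in-$x$ half of \eqref{hyp-nu} together with the boundedness of $v$ to absorb the kernel mismatch into $o_\alpha(1)$. None of this is in your sketch.

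The same issue is more serious for the censored part $j=i$, where the kernel is singular at the base point. Your fallback ``integrand $\leq o_\alpha(1)$, integrate against finite L\'evy mass'' does not close there: the far-part mass is of order $\delta^{-\sigma}$ while the near part costs $O(\alpha\,\delta^{2-\sigma})$ (in your normalization), so no ordering of the limits in $\delta$ and $\alpha$ makes both small simultaneously. What is actually needed --- and what the paper does --- is the decomposition $\tilde\nu_{ii}=\min\{\nu_{ii}(\bar x,\cdot),\nu_{ii}(\bar y,\cdot)\}$, $\nu_{ii}^\pm$ as in \cite{bci11}: against the common kernel $\tilde\nu_{ii}$ the exact maximality inequality $u_i(\bar x+z)-v_i(\bar y+z)\leq u_i(\bar x)-v_i(\bar y)$ gives a nonpositive contribution with no loss, while against the remainders $\nu_{ii}^\pm\leq\Lambda|\bar x-\bar y|\,|z|^{-(1+\sigma)}$ one controls $u_i(\bar x+z)-u_i(\bar x)\leq\alpha^{-2}(2|\bar x-\bar y||z|+|z|^2)$ from the quadratic penalization (not from any modulus of $u$, which is merely USC), yielding $O(\alpha^{-2}|\bar x-\bar y|^{3-\sigma})+O(\|u\|_\infty|\bar x-\bar y|^{1-\sigma})=o_\alpha(1)$. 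Your proposal acknowledges that the nonlocal bookkeeping is ``the main obstacle'' but the specific estimates you propose would fail at exactly this step; supplying the change of variables and the $\min$-decomposition is what turns the sketch into a proof.
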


\begin{rema}\label{rmk-comparison}
For the sake of notations, we present the proof in the case of a junction.
But it applies readily to the case of a general network (up to use the tedious
notations of Section~\ref{sec:gene-net}). Indeed, since $u\leq v$ at the vertices,
a positive maximum of $u-v$ cannot be achieved at the vertices and we can localize the following proof
by contradiction inside one particular edge. Moreover, all the nonlocal estimates are done
edge by edge, and do not depend on the special structure of the network.
\end{rema}

\begin{proof}
By contradiction, we assume 
$$
M := \max_\Gamma \{ u - v\} > 0.
$$

Then, doubling variables, by standard arguments in the viscosity solution's theory, we have the function
$$
\Phi(x, y)= u(x) - v(y) - \alpha^{-2} \rho(x,y)^2, \quad  x,y\in \Gamma,
$$
attains its maximum at some point $(\bar x, \bar y)$ (which depends upon $\alpha$) such that
\begin{align}\label{x-ysmall}
\alpha^{-2} \rho(\bar x, \bar y)^2 \to 0, \ u(\bar x) - v(\bar y) \to M, \quad \mbox{as} \ \alpha \to 0.
\end{align}

In particular, up to subsequences (not relabeled), we have $\bar x, \bar y \to \hat x \in \Gamma$ as $\alpha \to 0$. Since $(x,y) \mapsto u(x)-v(y)$ is upper semi continuous on $\Gamma \times \Gamma$ and $u-v \leq 0$ on the vertices $\VV$, there exists $\delta_0 > 0$ such that $\rho (\bar x, \VV), \rho (\bar y, \VV)\geq  \delta_0$ for all $\alpha$ small enough. Thus, without loss of generality we may assume that $\bar x, \bar y \in E_i$ for some fixed index $i$, for all $\alpha$ small.

Now, denoting $\phi(x,y) = \alpha^{-2} |x - y|^2$ and $\bar p = D_x \phi(\bar x, \bar y) = - D_y \phi(\bar x, \bar y)$, we use~\cite[Corollary 1]{bi08} to infer the existence of $\bar \varrho > 0$ such that, for all $\varrho < \bar \varrho$, there exists $X_\varrho, Y_\varrho \in \R$ such that, for all $\delta > 0$ small enough, we have the viscosity inequalities
\begin{eqnarray*}
  && \lambda u(\bar x) - \mu_i(\bar x) X_\varrho
  - \I_i [B_\delta(\bar x)] \phi(\cdot, \bar y) (\bar x) - \I_i[B_\delta^c(\bar x)] u(\bar x) + H_i(\bar x, \bar p)\\
  && \hspace*{8cm}\leq f_i(\bar x) +  \alpha^{-2} o_\varrho(1), \\
  && \lambda v(\bar y) - \mu_i(\bar y) Y_\varrho + \I_i [B_\delta(\bar y)] \phi(\bar x, \cdot) (\bar y)
  - \I_i[B_\delta^c(\bar y)] v(\bar y) + H_i(\bar y, \bar p)\\
  &&  \hspace*{8cm}\geq  f_i(\bar y) + \alpha^{-2}o_\varrho(1),
\end{eqnarray*}
where $X_\varrho, Y_\varrho$ are such that
$$
\left [ \begin{array}{cc} X_\varrho & 0 \\ 0 & -Y_\varrho \end{array} \right ] \leq D^2_{(x,y)} \phi(\bar x, \bar y) + o_\varrho(1),
$$
and where $o_\varrho(1) \to 0$ as $\varrho \to 0$ uniformly with respect to the other parameters. Taking into account~\eqref{steady-plus} for $a_i$ and
following the classical arguments in~\cite{cil92} on the matrix inequality above, we see that
\begin{equation*}
    \mu_i(\bar x) X_\varrho -  \mu_i(\bar y) Y_\varrho
    \leq 3C_\mu^2 \alpha^{-2}\rho (\bar x,\bar y)  + o_\varrho(1) \leq
    o_\alpha(1) + o_\varrho(1)
\end{equation*}
for all $\varrho$. 

On the other hand, by the assumptions~\eqref{H},~\eqref{steady-plus}
on $H_i$, $f_i$, and the asymptotic properties of the maximum point $(\bar x, \bar y)$, we have
$$
-H_i(\bar x, \bar p) + H_i(\bar y, \bar p) + f_i(\bar x)-f_i(\bar y) \leq  o_\alpha(1).
$$

To estimate the nonlocal terms, we first
assume $0 < \delta < \delta_0/2$, from which $B_\delta(\bar x), B_\delta(\bar y) \subset E_i$ for all $\alpha$ small.
Using the smoothness of $\phi$ and~\eqref{hyp-nu}, We have
\begin{equation*}
|\I_i[B_\delta]\phi(\cdot, \bar y)(\bar x)|, |\I_i[B_\delta]\phi(\bar x, \cdot)(\bar y)| \leq C \Lambda \alpha^{-2} \delta^{1 - \sigma},
\end{equation*}
for some $C > 0$ just depending on $\sigma$.

Subtracting the viscosity inequalities and using the above estimates, we arrive at
\begin{equation}\label{app-ineq}
\lambda M - \alpha^{-2} O(\delta^{1 - \sigma}) - o_\alpha(1) - \alpha^{-2} o_\varrho(1) \leq I, 
\end{equation}
where we have denoted
$$
I:=\I_i[B_\delta^c(\bar x)] u(\bar x) - \I_i[B_\delta^c(\bar y)] v(\bar y). 
$$

It remains to estimate this term. If $\bar x = \bar y$ along a subsequence $\alpha\to 0$, we have
\begin{align*}
  I = & \sum_{j\neq i} \int_{J_j} [u_j(z) - v_j(z) - (u_i(\bar x) - v_i(\bar x))] \nu_{ij}(\bar x, \rho(\bar x,z)) dz \\
  & + \int_{J_i \setminus B_\delta(\bar x)} [u_i(z) - v_i(z) - (u_i(\bar x) - v_i(\bar x))] \nu_{ii}(\bar x, |\bar x - z|)dz.
\end{align*}
Since $\Phi(\bar x, \bar x) \geq \Phi(z,z)$ for all $z \in \Gamma$, we obtain that $I \leq 0$. Replacing it into~\eqref{app-ineq} and sending $\varrho \to 0, \delta \to 0$ and $\alpha \to 0$, we arrive at a contradiction with the fact that $M > 0$.

In the case $\bar x \neq \bar y$, we divide the analysis depending on which edge we integrate.

For $j \neq i$, following~\eqref{reecrit234}, we write
\begin{align*}
\I_{ij}u(\bar x) - \I_{ij} v(\bar y) = & \int_{\bar x}^{a_j + \bar x} [u_j(z - \bar x) - u_i(\bar x)] \nu_{ij}(\bar x, z)dz \\
& - \int_{\bar y}^{a_j + \bar y} [v_j(z - \bar y) - v_i(\bar x)] \nu_{ij}(\bar y, z) dz.
\end{align*}
Without loss of generality, we can assume $\bar x < \bar y$. As we already mentioned, there exists $\delta_0 > 0$ such that $\delta_0 < \bar x$ for all $\alpha$. Hence, by the boundedness of $u, v$, we have
\begin{equation}\label{traslape}
\begin{split}
& \Big{|} \int_{\bar x}^{\bar y} [u_j(z - \bar x) - u_i(\bar x)] \nu_{ij}(\bar x, z)dz \Big{|} \leq C \Lambda \| u \|_\infty \delta_0^{-\sigma} |\bar x - \bar y|= o_\alpha(1), \\
& \Big{|} \int_{a_j + \bar x}^{a_j + \bar y} [v_j(z - \bar y) - v_i(\bar y)] \nu_{ij}(\bar y, z)dz \Big{|} \leq C \Lambda \| v \|_\infty \delta_0^{-\sigma} |\bar x - \bar y| =o_\alpha(1),
\end{split}
\end{equation}
since $|\bar x - \bar y| \to 0$ as $\alpha \to 0$. It follows
\begin{align*}
\I_{ij}u(\bar x) - \I_{ij} v(\bar y) = &  \int_{\bar y}^{a_j + \bar x} [u_j(z - \bar x) - v_j(z - \bar y) - (u_i(\bar x) - v_i(\bar y))] \nu_{ij}(\bar x, z)dz \\
& +  \int_{\bar y}^{a_j + \bar x} [v_j(z - \bar y) - v_i(\bar y)] (\nu_{ij}(\bar x, z) - \nu_{ij}(\bar y, z))dz + o_\alpha(1).
\end{align*}
Using that $(\bar x, \bar y)$ is maximum point of $\Phi$, we see that
\begin{align*}
\I_{ij}u(\bar x) - \I_{ij} v(\bar y) \leq \int_{\bar y}^{a_j + \bar x} [v_j(z - \bar y) - v_i(\bar y)] (\nu_{ij}(\bar x, z) - \nu_{ij}(\bar y, z))dz + o_\alpha(1),
\end{align*}
and by the continuity assumption on the kernels and the fact that we are integrating away the origin, we conclude that
\begin{align*}
\I_{ij}u(\bar x) - \I_{ij} v(\bar y) \leq C \Lambda \| v \|_\infty |\bar x - \bar y|  + o_\alpha(1) = o_\alpha(1).
\end{align*}

In order to estimate the same term for $j=i$, we need to be careful since we require an estimate independent of $\delta$.
Recalling $\delta <\delta_0/2$, we can write
\begin{eqnarray*}
  && \I_{ii}[B_\delta^c(\bar x)] u(\bar x) - \I_{ii} [B_\delta^c(\bar y)] v(\bar y)\\
  &=& \left(\int_{-\bar x}^{-\delta} + \int_{\delta}^{a_i -\bar x}\right) (u_i(\bar x + z) - u_i(\bar x)) \nu_{ii}(\bar x, z)dz\\
  && -  \left(\int_{-\bar y}^{-\delta} + \int_{\delta}^{a_i -\bar y}\right) (v_i(\bar y + z) - v_i(\bar x)) \nu_{ii}(\bar y, z)dz.
\end{eqnarray*}
We then perform the same measure decomposition as in~\cite{bci11}. For $t > 0$, we consider the kernels
\begin{align*}
\tilde \nu_{ii}(t) = & \min \{ \nu_{ii}(\bar x, t), \nu_{ii}(\bar y, t) \}, \\
\nu_{ii}^+(t) = & \nu_{ii}(\bar x, t) - \tilde \nu_{ii}(t), \\
\nu_{ii}^-(t) = & \tilde \nu_{ii}(t)- \nu_{ii}(\bar y, t),
\end{align*}
from which, assuming without loss of generality $\bar x <\bar y$,  denoting $A = (-\bar x, -\delta) \cup (\delta, a_i - \bar y)$, 
and using similar estimates as in~\eqref{traslape}, we have
\begin{align*}
& \I_{ii}[B_\delta^c(\bar x)] u(\bar x) - \I_{ii} [B_\delta^c(\bar y)] v(\bar y) \\
= & o_\alpha(1) + \int_A [u_i(\bar x + z) - v_i(\bar y + z) - (u_i(\bar x) - v_i(\bar y))] \tilde \nu_{ii}(z)dz \\
& + \int_A [u_i(\bar x + z) - u_i(\bar x)] \nu^+_{ii}(z)dz - \int_A [v_i(\bar y + z) - v_i(\bar y)] \nu^-_{ii}(z)dz.
\end{align*}
Using that $\tilde \nu_{ii}$ is nonnegative and the fact that $(\bar x, \bar y)$ is maximum point of $\Phi$, we get
\begin{align*}
& \I_{ii}[B_\delta^c(\bar x)] u(\bar x) - \I_{ii} [B_\delta^c(\bar y)] v(\bar y) \\
\leq & o_\alpha(1) + \int_A [u_i(\bar x + z) - u_i(\bar x)] \nu^+_{ii}(z)dz - \int_A [v_i(\bar y + z) - v_i(\bar y)] \nu^-_{ii}(z)dz \\
=: &  o_\alpha(1) + I_1 + I_2.
\end{align*}

The estimates of $I_1$ and $I_2$ are similar, thus we concentrate on $I_1$. Taking $0 < \delta < \min \{ |\bar x - \bar y|, \delta_0/2 \}$, we use the fact that, by maximality of $(\bar x, \bar y)$ the following inequality holds
$$
u_i(\bar x + z) - u_i(\bar x) \leq \alpha^{-2} (2|\bar x - \bar y||z| + |z|^2), \quad z \in A \subset (-\bar x, a_i - \bar x).
$$
Denote $A_{1} = \{ z \in A : |z| \leq |\bar x- \bar y|\}$ and $A_{2} = A \setminus A_{1}$. Since $\nu_{ii}^+$ is nonnegative, we can write
\begin{align*}
I_1 \leq & \alpha^{-2} \int_{A_{1}} [2|\bar x - \bar y||z| + |z|^2] \nu_{ii}^+(z)dz + 2\| u \|_\infty \int_{A_{2}} \nu_{ii}^+(z)dz.
\end{align*}
At this point, we notice that by the assumptions~\eqref{hyp-nu} on $\nu$ we have
\begin{equation*}
\nu_{ii}^+(z) \leq \Lambda |\bar x - \bar y| |z|^{-(1 + \sigma)},
\end{equation*}
from which we conclude that
\begin{equation*}
I_1 \leq \Lambda \alpha^{-2} |\bar x - \bar y|^{3 - \sigma} (2(1 - \sigma)^{-1}  + (2 - \sigma)^{-1}) + 4 \Lambda \sigma^{-1} \| u \|_\infty |\bar x - \bar y|^{1 - \sigma}.
\end{equation*}
Thus, in view of~\eqref{x-ysmall}, we obtain $I_1 \leq o_\alpha(1)$, and similarly,  $I_2 \leq o_\alpha(1)$.

Putting these estimates into~\eqref{app-ineq} yields
\begin{equation*}
\lambda M - \alpha^{-2} \delta^{1 - \sigma} - o_\alpha(1) - \alpha^{-2} o_\varrho(1) \leq 0.
\end{equation*}
Letting $\varrho \to 0, \delta \to 0$ and finally $\alpha \to 0$, we reach a contradiction. This concludes the result.
\end{proof}

\begin{teo}\label{teo-censA}
Assume $\Gamma$ is a junction with $N \geq 1$ edges.
Assume that~\eqref{steady} and~\eqref{steady-plus} hold with $\underline{\mu}>0$,
and $\theta\in\R$.
Then, there exists a unique continuous viscosity solution $u\in C(\Gamma)$ to the problem~\eqref{dirichlet-gene}
satisfying the Dirichlet boundary conditions pointwisely.
\end{teo}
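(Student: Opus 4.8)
The plan is to derive \emph{uniqueness} from the comparison principle and \emph{existence} from Perron's method. For uniqueness, if $u_1,u_2\in C(\Gamma)$ are two viscosity solutions of~\eqref{dirichlet-gene} attaining the Dirichlet data pointwisely, then $u_1=u_2$ on $\VV=\{O\}\cup\{\ver_i\}_{1\le i\le N}$, so Lemma~\ref{comp-censA} gives $u_1\le u_2$, and exchanging the roles of $u_1$ and $u_2$ yields $u_1=u_2$ on $\Gamma$.

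For existence, I would first construct global ordered barriers: functions $\psi^-\le\psi^+$, continuous on $\Gamma$, which are respectively a viscosity subsolution and a viscosity supersolution of~\eqref{dirichlet-gene} and which attain the data pointwisely, $\psi^\pm(O)=\theta$ and $\psi^\pm_i(a_i)=h_i$. Since~\eqref{dirichlet-gene} carries a Dirichlet condition at $O$, not a Kirchhoff one, this is precisely the situation of Lemma~\ref{sous-sur-sol}, and I would repeat that construction with two cosmetic modifications. First, the constant $\epsilon$ is replaced by $\underline\mu>0$: one uses $\mu_i(x)\ge\underline\mu$ in the estimate in which the second-order term must dominate (this is the only place where strict ellipticity enters, exactly as for the lower barriers~\eqref{barriereObel}--\eqref{barriereaibel} in Proposition~\ref{prop1}) and one simply discards the nonnegative second-order term in the other estimate. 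Second, the bounded source $f_i$ is absorbed by enlarging the flat level $C_0$ in~\eqref{defC0} so that it also bounds $\max_{1\le i\le N}\max_{x\in\bar J_i}|f_i(x)|$. The nonlocal bound~\eqref{Ipsi-full} and the selections of the constants $L,\delta$ and $L_\epsilon,\delta_\epsilon$ carry over unchanged.

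Then I would set $u=\sup\{\,w\,\}$, where the supremum is taken over all viscosity subsolutions $w\in USC(\Gamma)$ of~\eqref{dirichlet-gene} with $\psi^-\le w\le\psi^+$, and run the standard Perron argument: the upper semicontinuous envelope $u^*$ is a subsolution and the lower semicontinuous envelope $u_*$ is a supersolution of~\eqref{dirichlet-gene}. Two points need the nonlocal adaptation, both routine in this framework (compare~\cite{bi08} and the arguments of Lemma~\ref{lem-F} and Lemma~\ref{equiv-def}): the stability of sub/supersolutions under the $\sup$ and $\inf$ operations, which works because any function touched from above (resp.\ below) by a $C^1$ test function lies in $\mathcal{F}_x$ and the nonlocal term passes to the limit; and the usual ``bump'' construction, in which, if $u_*$ failed the supersolution inequality at some interior point $x_0$, one strictly raises $u$ near $x_0$ by a small $C^2$ perturbation while staying $\le\psi^+$, the perturbation affecting the nonlocal operators only through the local piece $\I_i[B_\delta(x_0)]u$ in a uniformly controlled way. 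Because $\psi^-\le u_*\le u^*\le\psi^+$ and the barriers pin down the Dirichlet data, $u^*$ and $u_*$ both equal $\theta$ at $O$ and $h_i$ at $\ver_i$; hence the Dirichlet conditions hold pointwisely and the relaxed boundary viscosity inequalities at the vertices are automatic. Finally, Lemma~\ref{comp-censA} applied to the subsolution $u^*$ and the supersolution $u_*$, which coincide on $\VV$, gives $u^*\le u_*$, so $u:=u^*=u_*\in C(\Gamma)$ is the desired solution. I expect the main obstacle to be purely technical: making the Perron stability and the bump argument fully rigorous in the presence of the coupled nonlocal operators $\I_i$; the junction geometry itself is harmless, since the only condition imposed at $O$ is of Dirichlet type, so $O$ behaves exactly like a boundary vertex.
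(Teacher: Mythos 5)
Your proposal is correct and follows essentially the same route as the paper: ordered barriers obtained by adapting Lemma~\ref{sous-sur-sol} (with $\underline{\mu}>0$ playing the role of $\epsilon$ and $C_0$ enlarged to absorb $f$), Perron's method between the barriers, and then Lemma~\ref{comp-censA} applied to $u^*$ and $u_*$, which coincide on $\VV$, to get continuity and uniqueness. The extra detail you give on the nonlocal stability and bump arguments is a faithful expansion of what the paper leaves implicit.
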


\begin{proof}
From Lemma~\ref{sous-sur-sol}, there exist continuous sub- and supersolution $\psi^-$, $\psi^+$ to~\eqref{dirichlet-gene}
satisfying the boundary condition pointwisely
(the adaptation of Lemma~\ref{sous-sur-sol} to~\eqref{dirichlet-gene} when the equation is
  strictly elliptic is straightforward).
By Perron's method, we infer the existence
of a (possibly discontinuous) viscosity solution $u$ to~\eqref{dirichlet-gene} satisfying $\psi^-\leq u\leq \psi^+$
on $\Gamma$. Applying Lemma~\ref{comp-censA} with the subsolution $u^*$ and the supersolution $u_*$
satisfying $u^*=u_*$ on $\VV$, we conclude that $u^*\leq u_*$. Thus $u$ is a continuous
viscosity solution to~\eqref{dirichlet-gene}, which is unique, thanks again to Lemma~\ref{comp-censA}.
\end{proof}



\end{document}